\newtoks\prt 
\newtheorem{thm}{Theorem}[section]
\newtheorem{ques}[thm]{Question} 
\newtheorem{lemma}[thm]{Lemma} 
\newtheorem{prop}[thm]{Proposition} 
\newtheorem{cor}[thm]{Corollary}
\theoremstyle{definition} 
\newtheorem*{remark}{Remark}
\def\eqn#1$$#2$${\begin{equation}\label#1#2\end{equation}} 
\let\tilde\widetilde
\def\B{\mathcal B} 
\def\C{\mathcal C} 
\def\Dc{\mathscr D} 
\def\Dcp{\mathscr D^{\,\prime}}
\def\F{\mathcal F}
\def\M{\mathcal M} 
\def\Ms{\mathcal M_\sigma}
\def\Mp{\mathcal M_{pfa}}
\def\Ma{\mathcal M_{ac}}
\def\Mn{\mathcal M_s}
\def\bmu{{\boldsymbol  \mu}}
\def\bnu{{\boldsymbol  \nu}}
\def\o{{\boldsymbol o}}
\def\g{\boldsymbol g}
\def\f{\boldsymbol f}
\def\x{\boldsymbol x}
\def\y{\boldsymbol y} 
\def\h{\boldsymbol h} 
\def\bR{\boldsymbol R} 
\def\z{\boldsymbol z}
\def\u{\boldsymbol u}
\def\vv{\boldsymbol v} 
\def\bA{\boldsymbol A} 
\def\bS{\boldsymbol S} 
\def\en{\mathbb N} 
\def\er{\mathbb R} 
\def\qe{\mathbb Q}
\def\dist{\operatorname{dist}}
\def \div {\operatorname{div}}
\def \Lip {\operatorname{Lip}}
\def \Int {\operatorname{Int}}
\def\spt{\operatorname{spt}} 
\def\lin{\operatorname{span}}
\def \reg {\partial _{\kern1pt\text{reg}}}
\newcommand{\ip}[2]{\left\langle#1,#2\right\rangle}
\def\di{\,\mbox{\rm d}}
\newcommand{\norm}[1]{\left\|#1\right\|}
\newcommand{\esssup}{\operatorname{ess sup}}
\newcommand{\wscl}[1]{\overline{#1}^{w^*}}
\newcommand{\bchi}{\mbox{\Large$\chi$}}
\newcommand{\abs}[1]{\left|#1\right|}
\newcommand{\setsep}{;\,}
\begin{document}
\title[Finitely additive measures and Lipschitz-free spaces]{Finitely additive measures and complementability of Lipschitz-free spaces}
\author{Marek C\'uth, Ond\v{r}ej F.K. Kalenda and Petr Kaplick\'y}

\address{Department of Mathematical Analysis \\
Faculty of Mathematics and Physic\\ Charles University\\
Sokolovsk\'{a} 83, \\ 186 75, Praha 8, Czech Republic}
\email{cuth@karlin.mff.cuni.cz}
\email{kalenda@karlin.mff.cuni.cz}
\email{kaplicky@karlin.mff.cuni.cz}

\subjclass[2010]{46B04, 26A16, 46E27, 46B10}
\keywords{Lipschitz-free space, finitely-additive measure, Banach space complemented in its bidual, tangent spaces of a measure, divergence of a measure}

\thanks{M.~C\'uth is a junior researcher in the University Center for Mathematical Modelling, Applied Analysis and Computational Mathematics (MathMAC). M.~C\'uth and O.~Kalenda were supported in part by the grant GA\v{C}R 17-00941S. P. Kaplick\'y is a member of the Ne\v cas Center for Mathematical Modeling.}

\begin{abstract} 
We prove in particular that the Lipschitz-free space over a finitely-dimensional normed space
is complemented in its bidual. For Euclidean spaces the norm of the respective projection is $1$. As a tool to obtain the main result we establish several facts on the structure of finitely additive measures on finitely-dimensional spaces.
\end{abstract}

\maketitle

\section{Introduction}

The Lipschitz-free space over a metric space $M$ is a Banach space $\F(M)$ whose linear structure in a way reflects the metric structure of $M$. In the framework of Banach spaces, Lipschitz-free spaces can be used to get a linearization of certain properties. For example, one of the first results of this kind, due to G. Godefroy and N. Kalton \cite{GoKa03} says that, whenever a separable Banach space $Y$ admits (as a metric space) an isometric embedding into a Banach space $X$ there exists a linear isometric embedding. Lipschitz-free spaces form nowadays an active field of research, there are many results on Banach space properties of them -- papers \cite{GoOz14, PeSm15, HaPe14, LaPe13, HaLaPe16, Da15a, Da15b, CuDo16} contain results on approximation properties and Schauder bases, in \cite{CuDoWo16} it is proved that any Lipschitz-free space contains a copy of $\ell^1$ as soon as it has infinite dimension. In \cite{CKK} we provided an isometric description of Lipschitz-free space over convex domains in finite-dimensional normed spaces. In the present paper we will use this representation to investigate complementability of Lipschitz-free spaces in the bidual. 
One of the motivations for this research is \cite[Problem 16]{GoLaZi14} asking whether $\F(\ell^1)$ is complemented in the bidual. This problem is of a particular interest, because a positive answer would solve famous open problem of whether every Banach space which is Lipschitz-isomorphic to $\ell_1$ is actually linearly isomorphic to $\ell_1$, see \cite[comment after Problem 16]{GoLaZi14}. We were not able to answer this question, so we started by the investigation of finite-dimensional spaces. Our main result reads as follows.

\begin{thm}\label{T:main}
Let $E$ be a normed space of a finite dimension $d\ge2$. Then there is a linear projection $Q:\F(E)^{**}\to\F(E)$ such that $\norm{Q}\le d_{BM}(E,\ell^2_d)$, where $d_{BM}$ denotes the Banach-Mazur distance.
\end{thm}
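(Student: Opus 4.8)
The plan is to first reduce to the Euclidean space $\ell^2_d$ and then treat that case with a projection of norm exactly $1$. For the reduction, recall that a linear isomorphism $T\colon E\to\ell^2_d$ is in particular a bi-Lipschitz bijection, hence induces an isomorphism $\widehat T\colon\F(E)\to\F(\ell^2_d)$ with $\norm{\widehat T}=\Lip(T)=\norm T$ and $\norm{\widehat T^{-1}}=\Lip(T^{-1})=\norm{T^{-1}}$. Given a norm-one projection $P\colon\F(\ell^2_d)^{**}\to\F(\ell^2_d)$, I would set $Q=\widehat T^{-1}\circ P\circ\widehat T^{**}$. Since $\widehat T^{**}$ restricts to $\widehat T$ on $\F(E)$ and $P$ fixes $\F(\ell^2_d)$, a short check shows $Q$ is a projection of $\F(E)^{**}$ onto $\F(E)$, and $\norm Q\le\norm{\widehat T^{-1}}\cdot 1\cdot\norm{\widehat T}=\norm T\,\norm{T^{-1}}$. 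Taking the infimum over all such $T$ yields $\norm Q\le d_{BM}(E,\ell^2_d)$.

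It remains to produce the norm-one projection in the Euclidean case, and this is where finitely additive measures enter. The gradient map $\nabla\colon\Lip_0(\er^d)\to L^\infty(\er^d;\er^d)$ is an isometric, weak$^*$-to-weak$^*$ continuous embedding onto a weak$^*$-closed subspace $G$ (the gradient, i.e.\ curl-free, vector fields). Predualising, $\F(\er^d)$ is the quotient $L^1(\er^d;\er^d)/{}^{\perp}G$, the quotient map being essentially $v\mapsto-\div v$, while dualising gives an isometric identification $\F(\er^d)^{**}=\Lip_0(\er^d)^*=\bigl(L^\infty(\er^d;\er^d)\bigr)^*/G^{\perp}$. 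By the classical description of $(L^\infty)^*$, the space $(L^\infty(\er^d;\er^d))^*$ consists precisely of finitely additive $\er^d$-valued measures vanishing on Lebesgue-null sets, and the canonical inclusion $\F(\er^d)\hookrightarrow\F(\er^d)^{**}$ is induced by the inclusion $L^1(\er^d;\er^d)\hookrightarrow(L^\infty(\er^d;\er^d))^*$ of the countably additive (absolutely continuous) measures.

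The projection will be built from the Yosida--Hewitt decomposition $m=m_c+m_p$ of a finitely additive measure into its countably additive and purely finitely additive parts. Since this decomposition satisfies $\norm m=\norm{m_c}+\norm{m_p}$, the assignment $\pi_c\colon m\mapsto m_c$ is a norm-one projection of $(L^\infty(\er^d;\er^d))^*$ onto $L^1(\er^d;\er^d)$. Provided $\pi_c$ maps the annihilator $G^{\perp}$ into itself, it descends to a norm-one map $\overline{\pi_c}\colon\F(\er^d)^{**}\to\F(\er^d)$; as $\overline{\pi_c}$ restricts to the identity on $\F(\er^d)$ (a countably additive measure equals its own countably additive part), it is the desired norm-one projection $P$, and being a projection forces $\norm P=1$.

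The crux is therefore the invariance claim: if a finitely additive $m$ annihilates all gradients, i.e.\ $\ip{m}{\nabla f}=0$ for every $f\in\Lip_0(\er^d)$, then so does its countably additive part $m_c$ (equivalently, its purely finitely additive part $m_p$). This is exactly the point requiring the structural analysis of finitely additive measures on $\er^d$ announced in the abstract: one must understand how the divergence, read as a functional on $\Lip_0$, interacts with the Yosida--Hewitt decomposition, and show that the ``divergence-free'' condition splits across the two parts. I expect this to be the main obstacle, and I also expect the hypothesis $d\ge2$ to be indispensable here: only for $d\ge2$ is $G$ a proper subspace of $L^\infty(\er^d;\er^d)$ -- in dimension one every field is a gradient, $\F(\er)\cong L^1(\er)$ is not complemented in its bidual -- so that $G^{\perp}$ is nontrivial and the above splitting has genuine content.
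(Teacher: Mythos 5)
Your first step is fine: transporting a norm-one projection for $\F(\ell^2_d)$ through the isomorphism $\widehat T$ induced by a linear map $T:E\to\ell^2_d$ realizing the Banach--Mazur distance (which is attained in finite dimensions) does yield the stated bound, and it is a legitimate alternative to how the paper handles the constant -- the paper instead fixes a Euclidean norm with $\norm{\cdot}_2\le\norm{\cdot}_E\le C\norm{\cdot}_2$, $C=d_{BM}(E,\ell^2_d)$, and builds $C$ directly into the norm estimate of its projection. So everything hinges on your Euclidean step.

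That step contains a fatal gap: the invariance claim you yourself isolate as the crux is false, and the paper proves it is false. In the paper's notation, your claim that the Yosida--Hewitt projection $m\mapsto m_c$ maps the annihilator of the gradients into itself reads $\bS(X(E)^\perp)\subset Y(E)$, where $X(E)$ is the space of gradient fields in $L^\infty(\er^d,E^*)$ and $Y(E)$ the space of divergence-free fields in $L^1(\er^d,E)$; Proposition~\ref{P:protipriklady} states that for $d\ge2$ this inclusion fails, and since the spaces $X(E)^\perp$, $Y(E)$ and the map $\bS$ do not depend on the choice of norm on $E$, the failure applies in particular to $E=\ell^2_d$. The obstruction is structural, not technical: if the inclusion held, one could take $E=\ell^1_d$, where $\bS$ is an $L$-projection, and conclude that $Y(E)$ is an $L$-embedded Banach space; by Pfitzner's theorem its unit ball would then be closed in $L^1(\er^d,E)$ in the topology of local convergence in measure, contradicting a result of Godefroy and Lerner. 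The phenomenon behind this is visible in the second part of Proposition~\ref{P:protipriklady}: the measure $\bmu$ obtained by integrating $\gamma'$ along a smooth injective curve $\gamma$ is singular with respect to $\lambda$, yet $\div\bmu=\varepsilon_{\gamma(a)}-\varepsilon_{\gamma(b)}$, so $\f\lambda-\bmu$ is divergence-free for a suitable compactly supported $\f\in L^1(\er^d,\er^d)$; discarding the singular part of such an element destroys divergence-freeness. This is exactly why the paper does not (and cannot) simply delete the purely finitely additive part: its projection is a composition of three maps, namely the operator $\bR$ (which sends a finitely additive measure to the $\sigma$-additive, in general \emph{singular}, measure representing its action on $\C_0(\er^d,E^*)$, and which maps $X(E)^\perp$ onto $Y_\sigma(E)$), then a projection $P$ of $\Ms(\er^d,E)$ onto the space $\Ms^{div}(E)$ of $\sigma$-additive measures whose distributional divergence is the divergence of an $L^1$ field, built from measurable fields of tangent spaces and pointwise orthogonal projections (this is where the Euclidean structure and the constant enter), and finally the isometry $J^{-1}$ onto $L^1(\er^d,E)/Y(E)$. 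Your scheme cannot be repaired by a finer analysis of the Yosida--Hewitt decomposition; a genuinely different projection is required.

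A further factual error: you assert that $\F(\er)\cong L^1(\er)$ is \emph{not} complemented in its bidual. It is: $L^1(\er)$ is $L$-embedded, hence $1$-complemented in $(L^1(\er))^{**}$, and the paper excludes $d=1$ only because that case is easy. Indeed, for $d=1$ your argument works vacuously (there $X(E)^\perp=\{0\}$), so the role of the hypothesis $d\ge2$ is the opposite of what you suggest: it is precisely in dimensions $d\ge2$ that the Yosida--Hewitt route collapses.
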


We excluded the case $d=1$ as it is well known and easy. Indeed, by \cite[p. 128]{GoKa03}
$\F(\er)$ is isometric to $L^1(\er)$.
Moreover, $L^1(\er)$ is even $1$-complemented in its bidual, in fact it is an $L$-embedded space, i.e., there is a projection $P:(L^1(\er))^{**}\to L^1(\er)$ satisfying $\norm{x^{**}}=\norm{P(x^{**})}+\norm{x^{**}-P(x^{**})}$ for $x^{**}\in(L^1(\er))^{**}$. For $d\ge2$ the situation is complicated, it is not clear whether $\F(E)$ is $L$-embedded when  $\dim E\ge 2$ (at least in some norm).

An easy consequence is the following statement.

\begin{cor}
Let $E$ be a finite-dimensional normed space and let $M\subset E$ be a subset whose closure has nonempty interior in $E$. Then $\F(M)$ is complemented in $\F(M)^{**}$.
\end{cor}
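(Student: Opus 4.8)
The plan is to reduce the statement, via two soft steps, to Theorem~\ref{T:main}.

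First I would remove the topological irregularity of $M$. Since $M$ is dense in its closure $\overline M$, and $\overline M$ is complete (being a closed subset of the finite-dimensional, hence complete, space $E$), $\overline M$ is the completion of $M$; as the Lipschitz-free space of a metric space is canonically isometric to that of its completion, $\F(M)$ is isometrically isomorphic to $\F(\overline M)$. Thus it suffices to prove that $\F(\overline M)$ is complemented in its own bidual, and from now on I may assume that $M$ is closed (and still has nonempty interior).

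Next I would isolate the elementary functional-analytic mechanism. Suppose $Y$ is a closed subspace of a Banach space $X$ which is complemented in $X$, say $\pi:X\to Y$ is a bounded projection and $\iota:Y\to X$ the inclusion, and suppose moreover that $X$ is complemented in $X^{**}$ by a projection $P:X^{**}\to X$. Then $Q:=\pi\circ P\circ\iota^{**}:Y^{**}\to Y$ is a bounded projection of $Y^{**}$ onto $Y$: indeed, for $y\in Y$ one has $\iota^{**}(y)=\iota(y)\in X$, so $P(\iota(y))=\iota(y)$ and $\pi(\iota(y))=y$, whence $Q$ restricts to the identity on $Y$, while its range is contained in $Y$. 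Applying this with $X=\F(E)$ --- which is complemented in $\F(E)^{**}$ by Theorem~\ref{T:main} when $d\ge2$ and by the isometry $\F(\er)\cong L^1(\er)$ when $d=1$ --- reduces the whole problem to the single assertion that $\F(\overline M)$ is complemented in $\F(E)$.

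It remains to complement $\F(\overline M)$ in $\F(E)$. The inclusion $\overline M\hookrightarrow E$ induces an isometric embedding $\iota:\F(\overline M)\hookrightarrow\F(E)$ whose adjoint is the restriction map $\rho:\Lip_0(E)\to\Lip_0(\overline M)$, $\rho(g)=g|_{\overline M}$. Hence a projection of $\F(E)$ onto $\F(\overline M)$ is produced by any weak$^*$-to-weak$^*$ continuous bounded linear extension operator $T:\Lip_0(\overline M)\to\Lip_0(E)$ (one with $\rho\circ T=\mathrm{id}$): its pre-adjoint $P:\F(E)\to\F(\overline M)$ then satisfies $\iota^*\circ P^*=(P\circ\iota)^*=\mathrm{id}$, so $P\circ\iota=\mathrm{id}_{\F(\overline M)}$ and $\iota\circ P$ is the desired projection. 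When $\overline M$ happens to be a Lipschitz retract of $E$ --- for instance whenever it is convex, the nearest-point map for an auxiliary Euclidean norm being Lipschitz and hence Lipschitz for the original norm --- one gets $T$ immediately by composing with the retraction.

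I expect this last step to be the main obstacle. A naive Lipschitz retraction $E\to\overline M$ need not exist: $\overline M$ may be disconnected even though it has nonempty interior, while $E$ is connected, so a continuous retraction onto $\overline M$ is impossible and one cannot simply retract. The honest route is therefore to construct the weak$^*$-continuous extension operator $T$ directly, exploiting that $E$ is finite-dimensional (so a doubling metric space, for which bounded linear Lipschitz extension operators from arbitrary subsets are available) together with the nonempty-interior hypothesis and the finitely-additive-measure description of $\F$ from \cite{CKK}, which lets one extend a Lipschitz function off $\overline M$ using the ambient geometry near the interior rather than appealing to the general doubling-space extension theorem.
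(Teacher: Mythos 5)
Your first two steps are sound: the reduction to $\overline M$ via the isometry $\F(M)\cong\F(\overline M)$ (Lipschitz functions vanishing at the base point extend uniquely from a dense subset, isometrically and weak$^*$-homeomorphically), and the abstract observation that a complemented subspace $Y$ of a space $X$ that is complemented in $X^{**}$ is itself complemented in $Y^{**}$, are both correct. However, the proposal then reduces everything to the claim that $\F(\overline M)$ is complemented in $\F(E)$, and this claim --- which at this point is the \emph{entire} content of the corollary --- is never proved. You yourself flag it as ``the main obstacle,'' correctly note that the Lipschitz-retraction shortcut fails in general (e.g.\ for disconnected $\overline M$), and then offer only a gesture: the Lee--Naor extension theorem for doubling spaces, or some unspecified argument via the representation of \cite{CKK}. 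Neither is carried out. In particular, to dualize a bounded linear extension operator $T:\Lip_0(\overline M)\to\Lip_0(E)$ into a projection of $\F(E)$ onto $\F(\overline M)$ you need $T$ to be weak$^*$-to-weak$^*$ continuous (equivalently, pointwise continuous on bounded sets), and verifying this for the extension operators you invoke is precisely the technical point you skip. So as written the proposal has a genuine gap; it is an outline whose hard step is missing, though the missing step is in fact true and available in the literature (this is essentially how Lancien--Perneck\'a and Kaufmann use Lee--Naor), so the outline could be repaired by citing the right result.

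The paper's own proof is entirely different and much shorter: it invokes \cite[Corollary 3.5]{ka15}, which asserts that $\F(M)$ is linearly \emph{isomorphic} to $\F(E)$ whenever the closure of $M\subset E$ has nonempty interior, and then applies Theorem~\ref{T:main}, since being complemented in the bidual is an isomorphic invariant. This route buys more than yours even if yours were completed: Kaufmann's theorem identifies $\F(M)$ with $\F(E)$ up to isomorphism outright, rather than merely exhibiting $\F(M)$ as a complemented subspace of a space with the desired property, and it absorbs all the extension-operator technology into a single citation.
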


\begin{proof} By \cite[Corollary 3.5]{ka15}, $\F(M)$ is isomorphic to $\F(E)$, so the statement follows immediately from Theorem~\ref{T:main}.
\end{proof}

Another consequence of the main result is the following (the second assertion follows from the John 
theorem \cite{john}).  

\begin{cor}\
\begin{itemize}
\item Let $E$ be a finite-dimensional Euclidean space. Then $\F(E)$ is $1$-comple\-men\-ted in its bidual.
\item Let $E$ be a normed space of finite dimension $d\ge2$. Then $\F(E)$ is $\sqrt{d}$-complemented in its bidual.
\end{itemize}
\end{cor}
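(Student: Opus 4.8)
The plan is to derive both assertions directly from Theorem~\ref{T:main}, so that the entire content of the corollary reduces to identifying or estimating the Banach--Mazur distance $d_{BM}(E,\ell^2_d)$, together with a separate treatment of the one-dimensional case that the theorem does not cover.

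For the first assertion, I would argue as follows. Suppose first that $\dim E=1$. Then $E$ is isometric to $\er$, and as recalled in the discussion following Theorem~\ref{T:main}, $\F(\er)$ is isometric to $L^1(\er)$, which is $L$-embedded and in particular $1$-complemented in its bidual. If instead $\dim E=d\ge2$, then a finite-dimensional Euclidean space of dimension $d$ is by definition isometric to $\ell^2_d$, whence $d_{BM}(E,\ell^2_d)=1$. Theorem~\ref{T:main} then yields a linear projection $Q\colon\F(E)^{**}\to\F(E)$ with $\norm{Q}\le1$. Since $Q$ is a nonzero idempotent, the inequality $\norm{Q}\le\norm{Q}^2$ forces $\norm{Q}\ge1$, so in fact $\norm{Q}=1$ and $\F(E)$ is $1$-complemented in its bidual.

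For the second assertion, let $E$ be an arbitrary normed space of finite dimension $d\ge2$. By the John theorem \cite{john}, the maximal-volume ellipsoid inscribed in the unit ball of $E$ witnesses the estimate $d_{BM}(E,\ell^2_d)\le\sqrt{d}$. Feeding this bound into Theorem~\ref{T:main} produces a projection $Q\colon\F(E)^{**}\to\F(E)$ with $\norm{Q}\le\sqrt{d}$, which is exactly the claimed $\sqrt{d}$-complementability.

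I do not anticipate any genuine obstacle here, as the statement is a formal consequence of the main theorem; the only points requiring attention are the separate handling of $d=1$ in the first assertion (since Theorem~\ref{T:main} is stated for $d\ge2$) and the invocation of the precise form of the John theorem to produce the constant $\sqrt{d}$.
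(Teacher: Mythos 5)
Your proposal is correct and follows exactly the paper's intended argument: the corollary is deduced immediately from Theorem~\ref{T:main} via $d_{BM}(E,\ell^2_d)=1$ in the Euclidean case and $d_{BM}(E,\ell^2_d)\le\sqrt{d}$ from the John theorem in the general case, with the $d=1$ case covered by the remark that $\F(\er)\cong L^1(\er)$ is $1$-complemented (indeed $L$-embedded) in its bidual. Your additional observation that any nonzero projection has norm at least $1$ is harmless but unnecessary, since $1$-complemented only requires a projection of norm at most $1$.
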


Note that for non-Euclidean spaces the estimate of the norm of a projection depends on dimension. The following question seems to be open.

\begin{ques}
Is there a constant $C\ge1$ such that $\F(E)$ is $C$-complemented in $\F(E)^{**}$ for any finite-dimensional normed space $E$?
\end{ques}

Note that the isometric representation of $\F(\Omega)$ given in \cite{CKK} works for any nonempty open convex subset $\Omega$ of a finite-dimensional normed space $E$. However, the estimate for the projection given in Theorem~\ref{T:main} is proved only in the case $\Omega=E$. Hence, the following question seems to be open.

\begin{ques}\label{Q:Omega}
Does the statement of Theorem~\ref{T:main} hold also for $\F(\Omega)$, where $\Omega$ is a nonempty open convex subset of a finite-dimensional normed space?
\end{ques}

The original motivation for our investigation was, as mentioned above, the problem of complementability of $\F(\ell^1)$ in the bidual. We have not answered this question, but in view of the main results the following problem seems to be natural.

\begin{ques}
Is $\F(\ell^2)$ complemented in $\F(\ell^2)^{**}$? Is it $1$-complemented?
\end{ques}

Let us now describe the basic strategy of the proof. Let $E$ be a finite-dimensional normed space and $\Omega\subset E$ be a nonempty open convex set. In \cite{CKK} it is shown that $\F(\Omega)$ is isometric to the quotient of $L^1(\Omega,E)$ by the subspace of the vector fields with zero divergence (see Proposition~\ref{P:popisCKK} for a precise formulation). So, to prove that $\F(\Omega)$ is complemented in the bidual it is enough to show that the mentioned quotient is complemented in its bidual. This bidual can be described, using standard Banach-space duality, as the respective quotient of $L^1(\Omega,E)^{**}$. The space $L^1(\Omega,E)^{**}$ can be represented as the space of $E$-valued finitely additive measures on $\Omega$ which are zero on Lebesgue null sets (see Section~\ref{s:vector}). Therefore, we start by investigating the finitely additive measures and some natural projections on them.
Section~\ref{s:scalar} is devoted to scalar (real-valued) measures, Section~\ref{s:vector} to vector-valued ones. In Section~\ref{s:bidual} we describe the isometric representation of $\F(\Omega)$ and $\F(\Omega)^{**}$ and prove several results on approximation in case $\Omega=E$. In Section~\ref{s:divergence} we study (not necessarily absolutely continuous) $\sigma$-additive vector-valued measures whose distributional divergence coincides with the divergence of an $L^1$-vector field. We give a characterization of such measures and prove several estimates. In Section~\ref{s:tecny} we construct a projection onto this subspace of measures, using tangent spaces of measures. Finally, by composing three natural mappings we get the sought projection.

\section{Preliminaries}

In this section we collect basic definitions and notation used throughout the paper.

We start by defining the Lipschitz-free spaces. Let $(M,d)$ be a metric space with a distinguished point $0\in M$. By $\Lip_0(M)$ we denote the space of all real-valued Lipschitz functions on $M$ which map the distinguished point $0\in M$ to $0\in\er$. If we equip $\Lip_0(M)$ by the Lipschitz-constant norm $\norm{\cdot}_{lip}$, it becomes a Banach space. For any $x\in M$ let $\delta(x)\in \Lip_0(M)^*$ be the evaluation functional $\delta(x):f\mapsto f(x)$. Then $\delta:M\to \Lip_0(M)^*$ is an isometry and the Lipschitz-free space $\F(M)$ is defined as the closed linear span of $\delta(M)$ in $\Lip_0(M)^*$.

The space $\F(M)$ has the following universal property: Given any Banach space $Z$ and any Lipschitz mapping $f:M\to Z$ satisfying $f(0)=0$, there is a unique bounded linear operator $L:\F(M)\to Z$ such that $f=L\circ\delta$. Moreover, the norm of $L$ equals the Lipschitz constant of $f$. Applying this to $Z=\er$ we see that the dual of $\F(M)$ is canonically isometric to $\Lip_0(M)$.
(Note that by a recent result of N.~Weaver \cite{We17}, the space $\Lip_0(M)$ has a unique predual -- namely $\F(M)$ -- whenever $M$ is either bounded or it is a convex subset of a normed space.) We stress that the weak$^*$ topology on $\Lip_0(M)$ coincides on bounded sets with the topology of pointwise convergence.

We will further use the standard calculus of distributions. Let us recall basic notation and some useful facts. If $\Omega\subset\er^d$ is a nonempty open set, $\Dc(\Omega)$ denotes the space of \emph{test functions} on $\Omega$, i.e., the space of real-valued $\C^\infty$ functions with compact support in $\Omega$.
By $\Dcp(\Omega)$ we denote the space of distributions on $\Omega$.

An \emph{approximate unit} in $\Dc(\er^d)$ is a sequence $(u_n)$ in $\Dc(\er^d)$ given by the formula
$$u_n(\x)=n^d\phi(n\x),\quad\x\in\er^d,$$
where $\phi$ is a fixed nonnegative test function satisfying $\int_{\er^d}\phi=1$.

The approximate unit is used to smoothen measurable functions using convolution. We will apply it also to vector-valued functions. If $X=(\er^k,\norm{\cdot}_X)$ is a finite-dimensional normed space and $\f=(f_1,\dots,f_k):\er^d\to X$ is a locally integrable vector field, we set $u_n*\f=(u_n*f_1,\dots,u_n*f_k)$. 
The following lemma summarizes several results on approximation which follow immeadiately from their well-known scalar versions.

\begin{lemma}\label{L:aproximateunitconvergence} Let $(u_n)$ be an approximate unit in $\Dc(\er^d)$ and let $X=(\er^k,\norm{\cdot}_X)$ be a finite-dimensional normed space.
\begin{itemize}
\item[(i)] If $\f\in L^1(\er^k,X)$, then $u_n*\f\to\f$ in the norm of  $L^1(\er^k,X)$.
\item[(ii)] If $\f:\er^k\to X$ is continuous, then $u_n*\f\to\f$ uniformly on compact sets, in particular pointwise.
\item[(iii)]  If $\f:\er^k\to X$ is uniformly continuous (in particular, if $\f\in\C_0(\er^k,X)$), then $u_n*\f\to\f$ uniformly on $\er^k$.
\end{itemize}
\end{lemma}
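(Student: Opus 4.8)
The plan is to reduce all three assertions to their well-known scalar counterparts, exploiting that convolution acts coordinatewise together with the fact that on the finite-dimensional target $X=(\er^k,\norm{\cdot}_X)$ all norms are equivalent. Throughout, the common domain of the $u_n$ and of $\f$ is $\er^d$, since the convolution $u_n*\f$ is defined only for $u_n\in\Dc(\er^d)$.

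First I would record the elementary identity
\[
(u_n*\f)_i = u_n*f_i,\qquad i=1,\dots,k,
\]
which is immediate from the definition $u_n*\f=(u_n*f_1,\dots,u_n*f_k)$. Next, as $X$ is finite-dimensional, there is a constant $C\ge1$ with
\[
\norm{\vv}_X \le C\sum_{i=1}^k \abs{v_i}\quad\text{and}\quad \sum_{i=1}^k\abs{v_i}\le C\norm{\vv}_X
\]
for every $\vv=(v_1,\dots,v_k)\in\er^k$. In particular the coordinate functionals $\vv\mapsto v_i$ are Lipschitz from $X$ to $\er$, and conversely $\f=\sum_{i=1}^k f_i\,\e_i$, so $\f$ is continuous (resp.\ uniformly continuous, resp.\ integrable) \emph{if and only if} each component $f_i$ has the corresponding scalar property. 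This equivalence is exactly what transfers the hypotheses to the scalar setting and the conclusions back.

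With this in hand, part (i) follows by writing $\norm{u_n*\f-\f}_X \le C\sum_{i=1}^k\abs{u_n*f_i-f_i}$ and integrating, which yields
\[
\norm{u_n*\f-\f}_{L^1(\er^d,X)}\le C\sum_{i=1}^k\norm{u_n*f_i-f_i}_{L^1(\er^d)}\longrightarrow0
\]
by the scalar $L^1$-approximation theorem applied to each $f_i\in L^1(\er^d)$. For (ii) and (iii) the same pointwise estimate, with the supremum over a fixed compact set $K$ (resp.\ over all of $\er^d$) in place of the integral, reduces the claim to the classical facts that $u_n*f_i\to f_i$ uniformly on compacta when $f_i$ is continuous (resp.\ uniformly on $\er^d$ when $f_i$ is uniformly continuous); the sufficiency of $\f\in\C_0(\er^d,X)$ in (iii) is the standard observation that a continuous field vanishing at infinity is uniformly continuous. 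There is no genuine obstacle here: the entire content lies in the scalar versions, and the only point deserving a word of care is the equivalence of the regularity hypotheses on the $X$-valued field $\f$ with those on its real components, which rests on the equivalence of norms on the finite-dimensional target.
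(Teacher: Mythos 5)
Your proposal is correct and follows exactly the route the paper intends: the paper offers no written proof at all, merely stating that the lemma ``follows immediately from the well-known scalar versions,'' and your coordinatewise reduction (via equivalence of norms on the finite-dimensional target $X$) is precisely the fleshed-out form of that remark. You also rightly read the domain as $\er^d$ rather than the $\er^k$ appearing in the statement, which is evidently a typo in the paper.
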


We will use also the following easy estimate.

\begin{lemma}\label{L:aproxunitodhad} Let $(u_n)$ be an approximate unit in $\Dc(\er^d)$, let $X=(\er^k,\norm{\cdot}_X)$ be a finite-dimensional normed space and let $\f:\er^d\to X$ be a locally bounded measurable mapping.
\begin{itemize}
\item[(i)] $\norm{(u_n*\f)(\x)}_X\le \sup_{\y\in\x-\spt u_n} \norm{\f(\y)}_X$ for any $\x\in\er^d$.
\item[(ii)] If $\f$ is bounded, then $\norm{u_n*\f}_{L^\infty(\er^d,X)}\le\norm{\f}_{L^\infty(\er^d,X)}$.
\end{itemize}
\end{lemma}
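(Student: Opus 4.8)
The plan is to write the convolution as an average against the probability measure with density $u_n$ and then to pass the norm inside the integral. By the definition of componentwise convolution we have, for every $\x\in\er^d$,
$$(u_n*\f)(\x)=\int_{\er^d}u_n(\y)\,\f(\x-\y)\di\y,$$
where the integral is taken coordinatewise and is well defined since $u_n$ is a test function (so $\spt u_n$ is compact) while $\f$ is locally bounded and measurable. Because $u_n\ge0$ and $\int_{\er^d}u_n=1$, the function $\y\mapsto u_n(\y)$ is the density of a probability measure supported on $\spt u_n$, and the integrand vanishes for $\y\notin\spt u_n$; thus $(u_n*\f)(\x)$ is a weighted average of the values $\f(\x-\y)$ with $\y\in\spt u_n$, i.e.\ of the values $\f(\z)$ with $\z\in\x-\spt u_n$.

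For (i) I would pass to the norm by duality. Fix $\x\in\er^d$, assume $(u_n*\f)(\x)\neq0$ (otherwise the estimate is trivial), and, using finite-dimensionality, choose $\varphi\in X^*$ with $\norm{\varphi}=1$ norming $(u_n*\f)(\x)$, that is $\varphi\big((u_n*\f)(\x)\big)=\norm{(u_n*\f)(\x)}_X$. Since $\varphi$ is linear and continuous it commutes with the integral, so
$$\norm{(u_n*\f)(\x)}_X=\int_{\er^d}u_n(\y)\,\varphi\big(\f(\x-\y)\big)\di\y\le\int_{\er^d}u_n(\y)\,\norm{\f(\x-\y)}_X\di\y.$$
Bounding $\norm{\f(\x-\y)}_X$ by $\sup_{\y\in\spt u_n}\norm{\f(\x-\y)}_X=\sup_{\z\in\x-\spt u_n}\norm{\f(\z)}_X$ and using $\int_{\er^d}u_n=1$ yields the claim; the supremum is finite by local boundedness of $\f$ and compactness of $\spt u_n$.

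For (ii) the same computation gives, for every $\x$, the inequality $\norm{(u_n*\f)(\x)}_X\le\int_{\er^d}u_n(\y)\norm{\f(\x-\y)}_X\di\y$. Here I would \emph{not} simply invoke (i), since the right-hand side there is a genuine pointwise supremum which may exceed the essential supremum on a Lebesgue null set. Instead, for each fixed $\x$ one has $\norm{\f(\x-\y)}_X\le\norm{\f}_{L^\infty(\er^d,X)}$ for almost every $\y$ (translation preserves null sets), so the integral is at most $\norm{\f}_{L^\infty(\er^d,X)}\int_{\er^d}u_n=\norm{\f}_{L^\infty(\er^d,X)}$. Taking the essential supremum over $\x$ gives (ii).

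I do not expect any genuine obstacle here: the only points requiring care are the vector-valued triangle inequality $\norm{\int g\di\mu}_X\le\int\norm{g}_X\di\mu$, which in a general non-Euclidean norm is best justified by the Hahn--Banach argument above rather than by a coordinatewise estimate, and the essential-versus-pointwise-supremum distinction that forces the separate treatment of (ii).
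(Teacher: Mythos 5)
Your proof is correct and takes essentially the same approach as the paper: for (i) the paper likewise pairs $(u_n*\f)(\x)$ with functionals in the unit ball of $X^*$, moves them inside the integral, and uses $u_n\ge0$, $\int u_n=1$ to obtain the bound. For (ii) the paper merely states it follows immediately from (i); your caveat about the essential versus pointwise supremum is sound, and your fix (bounding via the intermediate inequality $\norm{(u_n*\f)(\x)}_X\le\int u_n(\y)\norm{\f(\x-\y)}_X\di\y$, which is exactly the penultimate expression in the paper's displayed computation, and then estimating the integrand almost everywhere) settles the point cleanly.
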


\begin{proof}
The assertion (ii) follows immeadiately from (i). The assertion (i) is easy, we give a proof for the sake of completeness. Fix $\x\in\er^d$. Let $\z^*\in X^*$ be such that $\norm{\z^*}_{X^*}\le 1$. Then we have
$$\begin{aligned}
\abs{\ip{z^*}{(u_n*\f)(\x)}}&=\abs{\ip{z^*}{\int_{\er^d}u_n(\y)\f(\x-\y)\di\y}}\\&=
\abs{\int_{\er^d}u_n(\y)\ip{z^*}{\f(\x-\y)}\di\y}
\\&\le\int_{\er^d} u_n(\y)\norm{\f(\x-\y)}_X\di\y\le \sup_{\y\in\x-\spt u_n} \norm{\f(\y)}_X
\end{aligned}$$
\end{proof}

\section{Finitely additive measures -- scalar case}\label{s:scalar}

Throughout this section $\Omega$ will be a fixed nonempty open subset of $\er^d$ where $d\ge 1$ is fixed. By $\B(\Omega)$ we denote the Borel $\sigma$-algebra on $\Omega$, by $\lambda$ the $d$-dimensional Lebesgue measure on $\Omega$. Further, $\M(\Omega)$ will denote the Banach space of all real-valued finitely additive Borel measures on $\Omega$ equipped with the total variation norm, i.e., 
\begin{equation*}
\norm{\mu}=\sup \{\sum_{i=1}^n\abs{\mu(B_i)}\setsep B_1,\dots,B_n\in\B(\Omega)\mbox{ disjoint} \},\qquad \mu\in\M(\Omega).
\end{equation*}
The space $\M(\Omega)$ is canonically isometric to the dual space to $\B_b(\Omega,\er)$, the space of all bounded real-valued Borel measurable functions on $\Omega$ equipped with the sup-norm, see \cite[Theorem IV.5.1]{DS}. Therefore, there is a canonical weak$^*$ topology on $\M(\Omega)$. For any $\mu\in\M(\Omega)$ we define measures $\mu^+$, $\mu^-$ and $\abs{\mu}$ by
$$\begin{aligned}
\mu^+(B)&=\sup\{\mu(C)\setsep C\subset B\mbox{ Borel}\},\\
\mu^-(B)&=\sup\{-\mu(C)\setsep C\subset B\mbox{ Borel}\},\\
\abs{\mu}(B)&=\sup \{\sum_{i=1}^n\abs{\mu(C_i)}\setsep C_1,\dots,C_n\subset B\mbox{ disjoint Borel sets} \}
\end{aligned}
$$
Then $\mu^+,\mu^-,\abs{\mu}\in\M(\Omega)$, $\mu=\mu^+-\mu^-$ and $\abs{\mu}=\mu^++\mu^-$,
see \cite[Lemma III.1.6 and Theorem III.1.8]{DS}.

Let us consider the following four subspaces of $\M(\Omega)$:
$$\begin{aligned}
\Ms(\Omega)&=\{\mu\in\M(\Omega)\setsep\mu\mbox{ is $\sigma$-additive}\},\\
\Mp(\Omega)&=\{\mu\in\M(\Omega)\setsep \forall\varepsilon>0\,\exists (B_n)_{n\in\en}\mbox{ a Borel cover of }\Omega: \sum_{n=1}^\infty\abs{\mu}(B_n)<\varepsilon\},\\
\Ma(\Omega)&=\{\mu\in\M(\Omega)\setsep \forall B\in\B(\Omega): \lambda(B)=0\Rightarrow\mu(B)=0\},\\
\Mn(\Omega)&=\{\mu\in\M(\Omega)\setsep \exists B\in\B(\Omega): \lambda(B)=0\ \&\ \abs{\mu}(\Omega\setminus B)=0  \}.
\end{aligned}
$$

The measures from $\Mp(\Omega)$ are called \emph{purely finitely additive}. Let us remark that in \cite[Definition III.7.7]{DS} a different definition of a purely finitely additive measure is given, namely a nonnegative measure is purely finitely additive if it majorizes no nonzero nonnegative $\sigma$-additive measure, and a signed measure is purely finitely additive if it is true both for the positive and the negative parts. It is easy to see that our definition is equivalent (in view of the fact that $\B(\Omega)$ is a $\sigma$-algebra). Moreover, it can be carried to vector measures, which we will do in the next section. 

Further, any $\mu\in \M(\Omega)$ can be canonically decomposed to a $\sigma$-additive and a purely finitely additive measures, see \cite[Theorem III.7.8]{DS}. Indeed, if $\mu\ge 0$ set
\begin{equation*}
S(\mu)(B)=\inf\{\sum_{n\in\en}\mu(C_n)\setsep (C_n)_{n\in\en} \mbox{ is a Borel cover of }B\},\qquad B\in\B(\Omega).
\end{equation*}
It is easy to check that $S(\mu)$ is a nonnegative $\sigma$-additive Borel measure on $\Omega$. Since it is clearly the largest such measure below $\mu$, it is exactly the measure constructed in the proof of \cite[Theorem III.7.8]{DS}.
For a general $\mu$ we set $S(\mu)=S(\mu^+)-S(\mu^-)$. It is easy to show that $S$ is a linear projection and for any $\mu\in\M(\Omega)$ we have
\begin{equation*}
S(\mu)\in\Ms(\Omega),\quad \mu-S(\mu)\in \Mp(\Omega),\quad \norm{\mu}=\norm{S(\mu)}+\norm{\mu-S(\mu)}.
\end{equation*}
Indeed, the first statement is clear, the second one follows from \cite[Theorem 1.17]{YoHe}. By \cite[Theorem 1.24]{YoHe} the decomposition to a $\sigma$-additive and purely finitely additive parts is unique, hence the linearity of $S$ follows. The last equality follows easily using \cite[Theorem 1.16]{YoHe}.

$\sigma$-additive measures from $\Ma(\Omega)$ are exactly the measures which are absolutely continuous with respect to the Lebesgue measure. For finitely additive measures we do not use this term as it has a different meaning, it is defined by a stronger condition \cite[Definition III.4.12]{DS}. The subspace $\Ma(\Omega)$ is important for us, as it is canonically isometric to the dual of $L^\infty(\Omega)$, see \cite[Theorem IV.8.16]{DS}. Moreover, $\Ma(\Omega)$ is weak$^*$-closed in $\M(\Omega)$ and its weak$^*$ topology coincides with the weak$^*$ topology inherited from $\M(\Omega)$. Indeed, $L^\infty(\Omega)$ is the quotient of $\B_b(\Omega,\er)$ by the subspace made by functions which are zero $\lambda$-almost everywhere, so it is enough to use  the standard duality of subspaces and quotients.

Any $\mu\in\M(\Omega)$ can be canonically decomposed to a measure from $\Ma(\Omega)$ and a measure from $\Mn(\Omega)$. Indeed, if $\mu\ge 0$, set
\begin{equation*}
A_s(\mu)(B)=\sup\{\mu(C)\setsep C\subset B\mbox{ Borel}, \lambda(C)=0\},\quad B\in\B(\Omega).
\end{equation*}
It is clear that $A_s(\mu)\in\Mn(\Omega)$. Moreover, if $\lambda(B)=0$, then $A_s(\mu)(B)=\mu(B)$, hence $\mu-A_s(\mu)\in\Ma(\Omega)$. It is easy to check that $A_s$ is additive and positive homogeneous, hence if we define $A_s(\mu)=A_s(\mu^+)-A_s(\mu^-)$ for a general $\mu$, we infer that $A_s$ is a linear projection and that for any $\mu\in\M(\Omega)$ one has
\begin{equation*}
A_s(\mu)\in \Mn(\Omega),\quad \mu-A_s(\mu)\in\Ma(\Omega),\quad \norm{\mu}=\norm{A_s(\mu)}+\norm{\mu-A_s(\mu)}.
\end{equation*}

Further, by \cite[Theorem I.1.10(a)]{hawewe} the projections $S$ and $A_s$ commute.
It follows that for any $\mu\in\M(\Omega)$ we have
\begin{equation*}
\norm{\mu}=\norm{(I-A_s)S\mu}+\norm{A_s S\mu}+ \norm{(I-A_s)(I-S)\mu}+\norm{A_s(I-S)\mu}
\end{equation*}
and
\begin{equation*}
\begin{alignedat}{3}
(I-A_s)S\mu&\in\Ms(\Omega)\cap\Ma(\Omega),&\ 
A_sS\mu&\in\Ms(\Omega)\cap\Mn(\Omega),\\
(I-A_s)(I-S)\mu&\in\Mp(\Omega)\cap\Ma(\Omega),&\quad
A_s(I-S)\mu&\in\Mp(\Omega)\cap\Mn(\Omega).
\end{alignedat}
\end{equation*}

There is another projection of $\M(\Omega)$ onto $\Ms(\Omega)$, different from $S$. Let $\mu\in\M(\Omega)$. Then $\mu$ defines a continuous linear functional on $\B_b(\Omega,\er)$.
The space $\C_0(\Omega)$ (of continuous real-valued functions on the locally compact space $\Omega$ vanishing at infinity) is a closed subspace of $\B_b(\Omega,\er)$, hence we can consider the restriction of the mentioned functional to $\C_0(\Omega)$. By the Riesz representation theorem this functional is represented by a measure from $\Ms(\Omega)$.
Let us denote the resulting measure by $R(\mu)$. It is clear that $R$ is a linear operator and $\norm{R}\le 1$. Further, if $\mu\ge0$, then for any open set $G\subset\Omega$
we have
\begin{equation*}
R(\mu)(G)=\sup\{\mu(K)\setsep K\subset G \mbox{ compact}\}.
\end{equation*}
Indeed, this easily follows from the proof of the Riesz theorem (see \cite[Theorem 2.14]{rudin}), as we have for each open $G\subset\Omega$
$$R(\mu)(G)=\sup\{\mu(f)\setsep f\in\C_0(\Omega), 0\le f\le 1, f \mbox{ supported by a compact subset of }G\}.$$
Hence, if $\mu$ is $\sigma$-additive, then $R(\mu)=\mu$, as $\sigma$-additive finite Borel measures on $\Omega$ are necessarily regular. It follows that $R$ is a norm-one projection of $\M(\Omega)$ onto $\Ms(\Omega)$. This projection will serve as one of the key tools in the proof of the main result.

 Moreover, we have even $R(\Ma(\Omega))=\Ms(\Omega)$, in fact
\begin{equation}\label{eq:R1}
\forall\mu\in\Ms(\Omega)\,\exists\nu\in\Ma(\Omega): R(\nu)=\mu\ \&\ \norm{\nu}=\norm{\mu}.
\end{equation}
Indeed, this follows from the Hahn-Banach theorem, as $\C_0(\Omega)$ is canonically isometric to a subspace of $L^\infty(\Omega)$. In fact, much more is true, as witnessed by the following theorem which shows that the projection $R$ is in a sense `orthogonal' to the projections $A_s$ and $S$. 

\begin{thm}\label{T:R1}
For any $\mu\in\Ms(\Omega)$ there are $\nu_1\in\Mp(\Omega)\cap\Ma(\Omega)$ and $\nu_2\in\Mp(\Omega)\cap\Mn(\Omega)$ with $\norm{\nu_1}=\norm{\nu_2}=\norm{\mu}$ such that $R(\nu_1)=R(\nu_2)=\mu$. In particular,
$R(\Mp(\Omega)\cap \Ma(\Omega))=R(\Mp(\Omega)\cap\Mn(\Omega))=\Ms(\Omega)$. 
\end{thm}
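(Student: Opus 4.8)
The plan is to obtain each preimage as a weak$^*$ limit of a sequence of well-chosen $\sigma$-additive approximants of $\mu$. The key point is that $R$ only records the action of a measure on $\C_0(\Omega)$: identifying $\M(\Omega)$ with $\B_b(\Omega,\er)^*$, whose unit ball is weak$^*$ compact, any sequence $(\mu_n)$ with $\int f\di\mu_n\to\int f\di\mu$ for every $f\in\C_0(\Omega)$ has the property that every weak$^*$ cluster point $\nu$ satisfies $R(\nu)=\mu$ (two $\sigma$-additive measures agreeing on $\C_0(\Omega)$ coincide, by uniqueness in the Riesz theorem). First I would reduce to $\mu\ge0$: writing $\mu=\mu^+-\mu^-$ and solving for $\mu^+$ and $\mu^-$ separately, the differences $\nu_i=\nu_i^+-\nu_i^-$ remain in the \emph{linear subspaces} $\Mp(\Omega)$, $\Ma(\Omega)$, $\Mn(\Omega)$, satisfy $R(\nu_i)=\mu$, and obey $\norm{\nu_i}\le\norm{\nu_i^+}+\norm{\nu_i^-}=\norm{\mu^+}+\norm{\mu^-}=\norm\mu$ as well as $\norm{\nu_i}\ge\norm{R(\nu_i)}=\norm\mu$ since $\norm R\le1$; hence $\norm{\nu_i}=\norm\mu$. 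Thus it suffices, for $\mu\ge0$, to produce a positive preimage of total mass $\mu(\Omega)$ in the required subspace. Fixing a free ultrafilter $\U$ on $\en$ and setting $\la\nu,g\ra=\lim_\U\la\mu_n,g\ra$ for $g\in\B_b(\Omega,\er)$ makes $\nu\ge0$ with $\nu(\Omega)=\lim_\U\mu_n(\Omega)=\mu(\Omega)$ automatic.

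For $\nu_2\in\Mp(\Omega)\cap\Mn(\Omega)$ I would approximate $\mu$ by discrete measures living on a fixed countable set. Since a finite Borel measure on $\Omega$ is Radon, for each $n$ I choose a compact $K_n\subset\Omega$ with $\mu(\Omega\setminus K_n)<2^{-n}$, a finite Borel partition of $K_n$ into pieces $B^n_i$ of diameter $<\tfrac1n$, and, inside each piece, $n$ distinct points of $\qe^d\cap B^n_i$; I then let $\mu_n$ distribute the mass $\mu(B^n_i)$ equally over those $n$ points (and the residual mass $\mu(\Omega\setminus K_n)$ near a fixed rational point). Spreading each piece over $n$ points is what forces every individual point to receive mass $\le\mu(\Omega)/n\to0$, which is precisely what will dissolve the atoms of $\mu$. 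These $\mu_n$ are positive of total mass $\mu(\Omega)$, carried by the countable (hence Lebesgue-null) set $N=\qe^d\cap\Omega$, and a Riemann-sum estimate gives $\int f\di\mu_n\to\int f\di\mu$ on $\C_0(\Omega)$. The limit $\nu_2=\lim_\U\mu_n$ then satisfies $R(\nu_2)=\mu$ and $\norm{\nu_2}=\norm\mu$; it lies in $\Mn(\Omega)$ because $\nu_2(\Omega\setminus N)=\lim_\U\mu_n(\Omega\setminus N)=0$; and it lies in $\Mp(\Omega)$ because any $\sigma$-additive $0\le\rho\le\nu_2$ is carried by $N$ and has $\rho(\{x\})\le\nu_2(\{x\})=0$ at every point, so $\sigma$-additivity forces $\rho(N)=\sum_{x\in N}\rho(\{x\})=0$, i.e.\ $S(\nu_2)=0$.

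For $\nu_1\in\Mp(\Omega)\cap\Ma(\Omega)$ I would keep the same skeleton but replace each point mass by an absolutely continuous blob concentrated on a set of small measure. Concretely, inside $B^n_i$ I pick an open $A^n_i$ with $\lambda(A^n_i)$ so small that the total carrier $A_n=\bigcup_iA^n_i$ (with the residual term) has $\lambda(A_n)<2^{-n}$, and put $\mu_n=\sum_i\mu(B^n_i)\tfrac{1}{\lambda(A^n_i)}\chi_{A^n_i}\,\lambda+\cdots$. These $\mu_n$ are absolutely continuous of total mass $\mu(\Omega)$, still satisfy $\int f\di\mu_n\to\int f\di\mu$ on $\C_0(\Omega)$, and their weak$^*$ limit $\nu_1=\lim_\U\mu_n$ lies in $\Ma(\Omega)$ because $\Ma(\Omega)$ is weak$^*$ closed. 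To see $\nu_1\in\Mp(\Omega)$ I would use the decreasing sets $E_k=\bigcup_{n\ge k}A_n$, which have $\lambda(E_k)\le2^{1-k}\to0$ and $\nu_1(\Omega\setminus E_k)=\lim_\U\mu_n(\Omega\setminus E_k)=0$ (the tail $n\ge k$ is carried by $E_k$): any $\sigma$-additive $0\le\rho\le\nu_1$ is in $\Ma(\Omega)$, so $\rho=h\lambda$ with $0\le h\in L^1(\Omega)$, and being carried by each $E_k$ gives $\rho(\Omega)=\int_{E_k}h\di\lambda\to0$ by absolute continuity of the integral; hence $\rho=0$ and $S(\nu_1)=0$.

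The hard part is exactly the verification that the weak$^*$ limits are purely finitely additive; everything hinges on the clean principle that a positive measure carried by a fixed countable set, or by sets of arbitrarily small Lebesgue measure, can dominate no nonzero $\sigma$-additive measure. The remaining delicate point is the Riemann-sum convergence on $\C_0(\Omega)$ when $\Omega$ is unbounded or has boundary, which I would handle using the tightness of $\mu$ together with the vanishing of $f$ at infinity. The ``in particular'' clause is then immediate, since $R$ always maps into $\Ms(\Omega)$ while the two constructions yield the reverse inclusions $\Ms(\Omega)\subseteq R(\Mp(\Omega)\cap\Ma(\Omega))$ and $\Ms(\Omega)\subseteq R(\Mp(\Omega)\cap\Mn(\Omega))$.
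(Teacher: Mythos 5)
Your proposal is correct, but it takes a genuinely different route from the paper. The paper first reduces to a positive $\mu$ supported by a nowhere dense compact set $K$ (by deleting a countable dense set and using inner regularity), and then produces $\nu_1,\nu_2$ by Hahn--Banach: it extends the functional $f\mapsto\int f\di\mu$ from explicit subspaces of $\B_b(\Omega,\er)$ spanned by $\C_b(\Omega)$ together with the indicators of the sets on which the extension is forced to vanish, reads positivity off the norm equality $\nu_i(\Omega)=\norm{\nu_i}$, and gets pure finite additivity because $\nu_i$ kills $K$ and every compact subset of the $\sigma$-compact set $\Omega\setminus K$. You instead approximate $\mu$ in the $\C_0(\Omega)$-duality by explicit measures of the target type --- finitely supported on a fixed countable Lebesgue-null set, respectively absolutely continuous but carried by sets of measure $<2^{-n}$ --- and take a weak$^*$ ultrafilter limit in $\M(\Omega)=\B_b(\Omega,\er)^*$; membership in $\Mn(\Omega)$, $\Ma(\Omega)$ and $\Mp(\Omega)$ is inherited from properties that survive the limit (being carried by $N$; weak$^*$ closedness of $\Ma(\Omega)$; vanishing on singletons, resp.\ concentration on the sets $E_k$), while positivity, the mass $\mu(\Omega)$, and $R(\nu_i)=\mu$ come for free from the limit and the uniqueness part of the Riesz theorem. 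Your route avoids both the nowhere-dense reduction and the choice of auxiliary subspaces and makes the mechanism transparent (purely finitely additive preimages arise as limits of increasingly diffuse approximants); the paper's route handles both preimages by one uniform Hahn--Banach scheme with exact norm control and needs no compactness or ultrafilter argument. Both proofs ultimately rest on the same external ingredients: inner regularity of finite Borel measures (you use it to pick the compacts $K_n$, the paper in its Step 2) and regularity/uniqueness in the Riesz theorem.

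Two slips in your write-up are harmless but should be fixed. First, a Borel piece $B^n_i$ of a partition of $K_n$ may contain no rational points and may have empty interior, so you cannot place the $n$ points (or the open sets $A^n_i$) \emph{inside} $B^n_i$; instead place them at rational points of $\Omega$ (resp.\ in small open subsets of $\Omega$) within distance $1/n$ of $B^n_i$, which exist since $\Omega$ is open. The Riemann-sum estimate is unaffected, because every $f\in\C_0(\Omega)$ extends by zero to a uniformly continuous function on $\er^d$ --- which is also the clean way to settle the convergence on $\C_0(\Omega)$ that you flagged as delicate. Second, to pass from $S(\nu_i)=0$ to $\nu_i\in\Mp(\Omega)$ you should invoke the paper's observations that, for positive measures, $S(\nu)$ is the largest nonnegative $\sigma$-additive measure below $\nu$ and that $\nu-S(\nu)$ is purely finitely additive; with that reference the argument is complete.
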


\begin{proof} {\tt Step 1.} Assume that $\mu\ge0$ and the support of $\mu$ is a nowhere dense compact set $K\subset\Omega$.

We start by constructing $\nu_1$. Let $$Y_1=\lin (\C_b(\Omega)\cup\{\bchi_K\}\cup\{\bchi_B\setsep B\in\B(\Omega), \lambda(B)=0\}).$$
Then $Y_1$ is a linear subspace of $\B_b(\Omega,\er)$. Let us define a linear functional $\varphi_1$ on $Y_1$ by setting
$$\varphi_1(f+c\bchi_K+\sum_{i=1}^n\alpha_i\bchi_{B_i})=\int f\di\mu,\quad f\in\C_b(\Omega), B_1,\dots,B_n\in\B(\Omega),\alpha_1,\dots,\alpha_n,c\in\er.$$
Since always
$$\norm{f+c\bchi_K+\sum_{i=1}^n\alpha_i\bchi_{B_i}}_\infty\ge\norm{f}_\infty,$$ it is clear that $\norm{\varphi_1}\le\norm{\mu}$. By the Hahn-Banach extension theorem we can extend $\varphi_1$ to some $\psi_1\in\B_b(\Omega,\er)^*$ with $\norm{\psi_1}=\norm{\varphi_1}$. Let $\nu_1\in\M(\Omega)$ be the measure representing $\psi_1$. By the construction we get $\norm{\nu_1}=\norm{\mu}$ and $R(\nu_1)=\mu$. Since $\nu_1(B)=0$ whenever $\lambda(B)=0$, we see that $\nu_1\in\Ma(\Omega)$. Further, as $\norm{\nu_1}=\norm{\mu}=\mu(\Omega)=\varphi_1(1)=\psi_1(1)=\nu_1(\Omega)$, we deduce that $\nu_1\ge 0$. 
By construction we get $\nu_1(K)=\varphi_1(\bchi_K)=0$. Moreover, since $R(\nu_1)(\Omega\setminus K)=\mu(\Omega\setminus K)=0$, we have $\nu_1(L)=0$ for any compact set $L\subset\Omega\setminus K$. Finally, $\Omega\setminus K$ is $\sigma$-compact, hence $\Omega$ can be covered by a sequence of compact sets of zero $\nu_1$-measure, thus $\nu_1$ is purely finitely additive.

To construct $\nu_2$ we proceed similarly. Set
\begin{multline*}
Y_2=\lin\big(\C_b(\Omega)\cup\{\bchi_K\}\cup\{\bchi_L\setsep L\subset\Omega\mbox{ compact}, \lambda(L)=0\}\\ \cup\{\bchi_F\setsep F\subset\Omega\ \mbox{ is } F_\sigma,\Int F=\emptyset,\lambda(\Omega\setminus F)=0\}\big).\end{multline*}
For $f+c\bchi_K+\sum_{i=1}^n\alpha_i\bchi_{L_i}+\sum_{j=1}^m\beta_j\bchi_{F_j}\in Y_2$ set
$$\varphi_2(f+c\bchi_K+\sum_{i=1}^n\alpha_i\bchi_{L_i}+\sum_{j=1}^m\beta_j\bchi_{F_j})=\int f\di\mu$$ 
It is clear that $\varphi_2$ is a linear functional on $Y_2$. Since $K\cup\bigcup_{i=1}^n L_i\cup \bigcup_{j=1}^m F_j$ has empty interior, we deduce that $\norm{f+c\bchi_K+\sum_{i=1}^n\alpha_i\bchi_{L_i}+\sum_{j=1}^m\beta_j\bchi_{f_j}}_\infty\ge \norm{f}_\infty$
and thus $\norm{\varphi_2}\le\norm{\mu}$. By the Hahn-Banach extension theorem we can extend $\varphi_2$ to some $\psi_2\in\B_b(\Omega,\er)^*$ with $\norm{\psi_2}=\norm{\varphi_2}$. Let $\nu_2\in\M(\Omega)$ be the measure representing $\psi_2$. By the construction we get $\norm{\nu_2}=\norm{\mu}$ and $R(\nu_2)=\mu$. Since $\nu_2(\Omega)=\norm{\nu_2}$, we see that $\nu_2\ge0$. The proof that $\nu_2$ is purely finitely additive is the same as for $\nu_1$. It remains to show that $\nu_2\in\Mn(\Omega)$. To this end fix $G\subset\Omega\setminus K$ dense $G_\delta$ with $\lambda(G)=0$. Then $\nu_2(\Omega\setminus G)=0$, hence indeed $\nu_2\in\Mn(\Omega)$.

{\tt Step 2.} Suppose that $\mu\ge0$.

Let $C\subset\Omega$ be a countable dense set of $\mu$-measure zero. Then $\Omega\setminus C$ has full measure. By regularity of $\mu$ there is a $\sigma$-compact subset $F\subset \Omega\setminus C$ of full $\mu$-measure. Suppose $F=\bigcup_{n\in\en}F_n$ with $F_n$ compact. For each $n\in\en$ the restriction $\mu|_{F_n}$ satisfies the assumption of Step 1, hence there are $\nu_1^n\in\Ma(\Omega)\cap\Mp(\Omega)$ and  $\nu_2^n\in\Mn(\Omega)\cap\Mp(\Omega)$ such that $\norm{\nu_1^n}=\norm{\nu_2^n}=\norm{\mu|_{F_n}}$ and $R(\nu_1^n)=R(\nu_2^n)=\mu|_{F_n}$. It is enough to set
$$\nu_1=\sum_{n\in\en}\nu_1^n,\qquad \nu_2=\sum_{n\in\en}\nu_2^n.$$

{\tt Step 3.} If $\mu$ is a general measure, the proof is completed using the decomposition $\mu=\mu^+-\mu^-$.
\end{proof}

\section{Finitely additive measures -- vector-valued case}\label{s:vector}

We keep the notation from the previous section, in particular, $\Omega$ is a fixed nonempty open subset of $\er^d$, where $d\ge 1$ is given, Moreover, let $X$ be a finite-dimensional normed space. Then $X$ can be represented as $(\er^k,\norm{\cdot}_X)$, where $k=\dim X$. The dual $X^*$ can be then canonically represented as $(\er^k,\norm{\cdot}_{X^*})$.

Consider the following Lebesgue-Bochner spaces
\begin{align*}
L^1(\Omega,X)&=\{\f:\Omega\to X\mbox{ measurable}\setsep \norm{\f}_1=\int \norm{\f(\x)}_X\di\lambda(\x)<\infty \},\\
L^\infty(\Omega,X^*)&=\{\g:\Omega\to X^*\mbox{ measurable}\setsep \norm{\g}_\infty=\esssup_{\x\in\Omega}\norm{\g(\x)}_{X^*}<\infty \}.
\end{align*}
Note that any mapping with values in $X$ or $X^*$ can be represented by a $k$-tuple of
scalar functions. Then $\f=(f_1,\dots,f_k)\in L^1(\Omega,X)$ if and only if $f_1,\dots,f_k\in L^1(\Omega)$. Similarly, $\g=(g_1,\dots,g_k)\in L^\infty(\Omega,X^*)$
if and only if $g_1,\dots,g_k\in L^\infty(\Omega)$. So, the above spaces depend only on the dimension of $X$; the concrete norm on $X$ determines just the norms on these spaces.

It is well-known that $L^\infty(\Omega,X^*)$ is isometric to the dual of $L^1(\Omega,X)$, where the duality is given by
\begin{equation*}
\ip{\g}{\f} = \int_\Omega \ip{\g(\x)}{\f(\x)}\di\lambda(\x),\quad \g\in L^\infty(\Omega,X^*), \f\in L^1(\Omega,X).
\end{equation*}
Indeed, in case $\Omega$ is bounded it follows from \cite[p. 97-98]{DU} (where the assumption that the measure
on $\Omega$ is finite is used). The general case follows then easily (using $\sigma$-finiteness of the Lebesgue measure). Alternatively, one can use \cite[Example 2.19 and the formulas on p.24]{ryan}. Note that the isomorphic version follows easily from the scalar case, but the quoted results are necessary to prove that the identification is in fact isometric.

Further, by $\M(\Omega,X)$ we will denote the space of all $X$-valued finitely additive 
Borel measures on $X$ which have bounded total variation. We equip this space with the total variation norm, i.e.,
\begin{equation}
\norm{\bmu}=\sup \{\sum_{i=1}^n\norm{\bmu(B_i)}_X\setsep B_1,\dots,B_n\in\B(\Omega)\mbox{ disjoint} \},\qquad \bmu\in\M(\Omega,X).
\end{equation}
For any $\bmu\in\M(\Omega,X)$ we define its absolute variation by
\begin{equation}
\abs{\bmu}(B)=\sup \{\sum_{i=1}^n\norm{\bmu(B_i)}_X\setsep B_1,\dots,B_n\subset B\mbox{ disjoint Borel sets} \},\ B\in\B(\Omega).
\end{equation}
We define subspaces $\Ms(\Omega,X)$, $\Ma(\Omega,X)$, $\Mp(\Omega,X)$ and $\Mn(\Omega,X)$ by the same formulas as in the scalar case.

Note that any $\bmu\in\M(\Omega,X)$ can be represented as a $k$-tuple of measures from $\M(\Omega)$, thus $\M(\Omega,X)$ is isomorphic to $\M(\Omega)^k$. Moreover, it is clear that $\bmu=(\mu_1,\dots,\mu_k)\in \Ms(\Omega,X)$ if and only if $\mu_1,\dots,\mu_k\in\Ms(\Omega)$ and similarly for the remaining three subspaces.
So, we can define the projections $\bS:\M(\Omega,X)\to\Ms(\Omega,X)$ and $\bA_s:\M(\Omega,X)\to\Mn(\Omega,X)$
by
\begin{equation}\label{eq:bS,bA}
\begin{array}{c}
\bS(\mu_1,\dots,\mu_k)=(S(\mu_1),\dots,S(\mu_k)), \\ \bA_s(\mu_1,\dots,\mu_k)=(A_s(\mu_1),\dots,A_s(\mu_k)),\end{array}
(\mu_1,\dots,\mu_k)\in\M(\Omega,X).
\end{equation}

Further, $\M(\Omega,X)$ is canonically isometric to the dual of $\B_b(\Omega,X^*)$, the space of bounded $X$-valued Borel mappings on $\Omega$ equipped with the sup-norm. Indeed, simple Borel mappings are dense in $\B_b(\Omega,X^*)$ and it is clear that linear functionals on the space of simple functions are exactly the finitely additive $X$-valued measures and that the norm of such a functional is just the total variation of the vector measure in question. (In fact, this is the proof of the scalar case which works in the vector-valued case as well.)

Similarly, $\Ma(\Omega,X)$ is canonically isometric to the dual of $L^\infty(\Omega,X^*)$.

Further, let $\C_0(\Omega,X^*)$ be the space of continuous $X^*$-valued mappings on $\Omega$ which are zero at infinity equipped with the sup-norm. Its dual $\C_0(\Omega,X^*)^*$ is canonically isometric to $\Ms(\Omega,X)$. Indeed, the isomorphic identification follows easily from the scalar case ($\C_0(\Omega,X^*)$ is isomorphic to $\C_0(\Omega)^k$), so it is enough to observe that this identification is in fact isometric. This can be easily computed similarly as in the scalar case or, alternatively,
it follows by \cite[Theorem 5.22, Theorem 5.33 and the beginning of Section 3.2]{ryan}.

Let us define the mapping $\bR:\M(\Omega,X)\to\Ms(\Omega,X)$ similarly as in the scalar case. I.e., $\bR(\mu)$ is the measure which represents the restriction to $\C_0(\Omega,X^*)$ of the functional on $\B_b(\Omega,X^*)$ represented by $\bmu$. Note that
$$\bR(\mu_1,\dots,\mu_k)=(R(\mu_1),\dots,R(\mu_k)) \mbox{ for } \bmu=(\mu_1,\dots,\mu_k)\in\M(\Omega).$$

The following proposition follows from \eqref{eq:R1} and Theorem~\ref{T:R1}.

\begin{prop}
$$\begin{aligned}
\bR(\Ma(\Omega,X))&=\bR(\Ma(\Omega,X)\cap\Mp(\Omega,X))\\&=\bR(\Mn(\Omega,X)\cap\Mp(\Omega,X))=\Ms(\Omega,X).
\end{aligned}$$
\end{prop}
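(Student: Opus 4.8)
The plan is to reduce everything to the scalar statements via the coordinatewise representation of vector measures. Recall that every $\bmu\in\M(\Omega,X)$ is identified with a $k$-tuple $(\mu_1,\dots,\mu_k)$ of scalar measures, that $\bR$ acts coordinatewise by $\bR(\mu_1,\dots,\mu_k)=(R(\mu_1),\dots,R(\mu_k))$, and that membership in each of the subspaces $\Ms$, $\Ma$, $\Mp$, $\Mn$ is equivalent to the corresponding membership of each coordinate. All of these facts are already recorded in the excerpt, so the proposition ought to follow by applying the scalar results in each coordinate and reassembling.

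First I would dispose of the trivial inclusions. Since $\bR$ maps $\M(\Omega,X)$ into $\Ms(\Omega,X)$ by its very definition, all three images appearing on the left-hand side are contained in $\Ms(\Omega,X)$; it thus remains only to prove the reverse inclusion for each. To that end, fix $\bmu=(\mu_1,\dots,\mu_k)\in\Ms(\Omega,X)$, so that every coordinate $\mu_i$ lies in $\Ms(\Omega)$. Applying Theorem~\ref{T:R1} to each $\mu_i$ produces measures $\nu_1^i\in\Mp(\Omega)\cap\Ma(\Omega)$ and $\nu_2^i\in\Mp(\Omega)\cap\Mn(\Omega)$ with $R(\nu_1^i)=R(\nu_2^i)=\mu_i$. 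Setting $\bnu_1=(\nu_1^1,\dots,\nu_1^k)$ and $\bnu_2=(\nu_2^1,\dots,\nu_2^k)$ and invoking the coordinatewise characterization of the subspaces, we get $\bnu_1\in\Ma(\Omega,X)\cap\Mp(\Omega,X)$ and $\bnu_2\in\Mn(\Omega,X)\cap\Mp(\Omega,X)$, while $\bR(\bnu_1)=\bR(\bnu_2)=\bmu$.

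This already shows that $\bmu$ belongs to both $\bR(\Ma(\Omega,X)\cap\Mp(\Omega,X))$ and $\bR(\Mn(\Omega,X)\cap\Mp(\Omega,X))$, which yields the reverse inclusions for the second and third images. For the first image I would then simply observe that, since $\Ma(\Omega,X)\cap\Mp(\Omega,X)\subseteq\Ma(\Omega,X)$, one has $\Ms(\Omega,X)=\bR(\Ma(\Omega,X)\cap\Mp(\Omega,X))\subseteq\bR(\Ma(\Omega,X))\subseteq\Ms(\Omega,X)$, forcing equality throughout. (Alternatively \eqref{eq:R1} may be applied directly in each coordinate.)

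I do not expect a genuine obstacle here: once the coordinatewise behaviour of $\bR$ and of the subspace memberships is in hand, the whole argument is a matter of stacking the scalar outputs into $k$-tuples. The only point deserving a line of care is to confirm that forming a $k$-tuple of scalar measures from $\Mp$ (respectively $\Ma$, respectively $\Mn$) again lands in the corresponding vector-valued subspace; but this is precisely the coordinatewise equivalence already asserted in the text, so it may be cited rather than reproved.
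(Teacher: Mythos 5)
Your proof is correct and is exactly the argument the paper intends: the paper states this proposition without proof, noting only that it "follows from \eqref{eq:R1} and Theorem~\ref{T:R1}," and the coordinatewise reduction you spell out (using the recorded facts that $\bR$ acts coordinatewise and that membership in each subspace is coordinatewise) is precisely that intended derivation. Nothing is missing; the norm equalities in Theorem~\ref{T:R1} are simply not needed here since only the images are asserted to coincide.
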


\section{Lipschitz-free space and its bidual}\label{s:bidual}

In this section we recall the representation of Lipschitz-free spaces on finite-dimensional spaces from \cite{CKK}, we further deduce the representation of its bidual and then prove a key proposition on approximation by test functions (Proposition~\ref{P:aproximace} below).
Let us start with the basic setting. Let $E$ be a normed space of finite dimension $d\ge 2$, $\Omega\subset E$ a nonempty open convex set and $\o\in\Omega$ a distinguished point (we can assume that $\o$ is the origin). We represent $E$ as $(\er^d,\norm{\cdot}_E)$ and $E^*$ as $(\er^d,\norm{\cdot}_{E^*})$.

The following proposition summarizes the results of \cite{CKK}.

\begin{prop}\label{P:popisCKK} \
\begin{itemize}
\item The mapping $T:f\mapsto f'$ is a linear isometry of $\Lip_0(\Omega)$ into $L^\infty(\Omega,E^*)$. The range of $T$ is
$$\begin{aligned}
X(\Omega)&=\{\f=(f_1,\dots,f_d)\in L^\infty(\Omega,E^*)\setsep \forall i,j\in\{1,\dots,d\}: \partial_i f_j=\partial_j f_i\mbox{ in }\Dcp(\Omega)\}
\\&=\{\f\in L^\infty(\Omega,E^*)\setsep \forall i,j\in\{1,\dots,d\}\,\forall\varphi\in\Dc(\Omega): \int_\Omega f_j \partial_i \varphi=\int_\Omega f_i\partial_j \varphi\}.\end{aligned}$$
\item $X(\Omega)=Y(\Omega)^\perp$, where
$$\begin{aligned}
Y(\Omega)&=\{\g\in L^1(\Omega,E)\setsep \div\g=0 \mbox{ in }\Dcp(\er^d)\}
\\&=\{\g=(g_1,\dots,g_d)\in L^1(\Omega,E)\setsep \forall\varphi\in\Dc(\er^d): \sum_{i=1}^d\int_\Omega g_i \partial_i \varphi=0\}.\end{aligned}$$
\item The adjoint mapping $T^*$ maps the quotient $L^1(\Omega,E)/Y(\Omega)$ (which is the canonical predual of $X(\Omega)$)  isometrically onto $\F(\Omega)$.
\end{itemize}
\end{prop}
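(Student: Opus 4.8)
Since Proposition~\ref{P:popisCKK} is stated to summarize the results of \cite{CKK}, the honest proof is a reference; here I sketch the arguments and flag where convexity of $\Omega$ is used.

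\textbf{First bullet.} The isometry amounts to $\esssup_{\x\in\Omega}\norm{f'(\x)}_{E^*}=\Lip(f)=\norm{f}_{lip}$. The inequality $\le$ follows from Rademacher's theorem: at each differentiability point $\x$ one has $\abs{f'(\x)\vv}\le\Lip(f)\norm{\vv}_E$, so $\norm{f'(\x)}_{E^*}\le\Lip(f)$. For $\ge$ I use convexity: for $\x,\y\in\Omega$ the segment $[\x,\y]$ lies in $\Omega$ and $t\mapsto f(\x+t(\y-\x))$ is absolutely continuous, whence $\abs{f(\y)-f(\x)}\le\esssup\norm{f'}_{E^*}\cdot\norm{\y-\x}_E$. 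That $T$ lands in $X(\Omega)$ is the distributional identity $\partial_i\partial_jf=\partial_j\partial_if$. Surjectivity onto $X(\Omega)$ is a Poincar\'e-lemma argument: a curl-free $\f\in X(\Omega)$ has mollifications $\f_n=u_n*\f$ that are smooth and curl-free on the convex set $\Omega_n=\{\x:\dist(\x,\er^d\setminus\Omega)>\diam\spt u_n\}$, hence admit potentials $v_n$ (line integrals from $\o$) with $v_n(\o)=0$ and $\norm{v_n'}_\infty\le\norm{\f}_\infty$ by Lemma~\ref{L:aproxunitodhad}(ii); an Arzel\`a--Ascoli argument yields a subsequential locally uniform limit $v\in\Lip_0(\Omega)$ with $v'=\f$.

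\textbf{Second bullet.} Put $Z=\lin\{\,\partial_j\varphi\,\e_i-\partial_i\varphi\,\e_j:\varphi\in\Dc(\Omega),\,1\le i,j\le d\,\}$. Each such field is compactly supported in $\Omega$ and divergence-free, so $Z\subset Y(\Omega)$, while the defining condition of $X(\Omega)$ says exactly that $X(\Omega)=Z^\perp$; hence $Y(\Omega)^\perp\subset Z^\perp=X(\Omega)$. For the reverse inclusion I must show that each $\f=v'\in X(\Omega)$ annihilates each $\g\in Y(\Omega)$. Extend $v$ to a Lipschitz function $\tilde v$ on $\er^d$ (so $\tilde v'=\f$ a.e.\ on $\Omega$), and let $\bar\g\in L^1(\er^d,E)$ be the zero-extension of $\g$, so $\div\bar\g=0$ in $\Dcp(\er^d)$. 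I then extend the identity $\int_{\er^d}\ip{\bar\g}{\nabla\psi}=0$ from $\psi\in\Dc(\er^d)$ to $\psi=\tilde v$: first mollify $\tilde v$ (Lemma~\ref{L:aproxunitodhad}(ii) keeps the gradients bounded), then insert cut-offs $\eta_R$ with $\norm{\nabla\eta_R}_\infty\le C/R$, the cut-off error being bounded by $\tfrac CR\int_{\abs{\x}>R}\abs{\bar\g}\to0$ because $\bar\g\in L^1$; two applications of dominated convergence give $\int_\Omega\ip{\f}{\g}=\int_{\er^d}\ip{\bar\g}{\nabla\tilde v}=0$.

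\textbf{Third bullet.} By the second bullet $X(\Omega)=Y(\Omega)^\perp$ is the annihilator of a subspace of $L^1(\Omega,E)$, so its canonical predual is $L^1(\Omega,E)/Y(\Omega)$. The map $T:\Lip_0(\Omega)=\F(\Omega)^*\to X(\Omega)=(L^1(\Omega,E)/Y(\Omega))^*$ is a surjective linear isometry by the first two bullets, and it is weak$^*$-to-weak$^*$ continuous: if $f_\alpha\to f$ in the weak$^*$ topology of $\Lip_0(\Omega)$, i.e.\ pointwise with uniformly bounded Lipschitz constants, then $f_\alpha'\to f'$ weak$^*$ in $L^\infty(\Omega,E^*)$. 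Being a weak$^*$-continuous isometric isomorphism of dual spaces, $T$ has an adjoint $T^*$ whose restriction to the canonical predual $L^1(\Omega,E)/Y(\Omega)\hookrightarrow(L^1(\Omega,E)/Y(\Omega))^{**}$ is an isometric isomorphism onto $\F(\Omega)\hookrightarrow\F(\Omega)^{**}$.

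\textbf{Main obstacle.} The technical heart is the reverse inclusion in the second bullet, that curl-free $L^\infty$ fields annihilate divergence-free $L^1$ fields. The difficulty is that the potential $v$ is merely Lipschitz (not compactly supported), whereas $\div\g=0$ holds only against test functions on all of $\er^d$ (a no-flux condition across $\partial\Omega$); reconciling these forces the mollification-and-cut-off passage above, in which the $L^1$-integrability of $\g$ is precisely what annihilates the boundary-at-infinity term.
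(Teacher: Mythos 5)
The paper itself gives no proof of this proposition---it is stated explicitly as a summary of the results of \cite{CKK}---and your sketch is a correct reconstruction along the same lines as that reference: the gradient map is an isometry by Rademacher's theorem together with convexity, surjectivity onto $X(\Omega)$ is the Poincar\'e lemma via mollification on eroded convex subsets, the annihilator identity $X(\Omega)=Y(\Omega)^\perp$ rests on the mollify-and-cut-off argument exploiting that $\div\g=0$ is tested against $\Dc(\er^d)$ rather than $\Dc(\Omega)$, and the third bullet is the standard weak$^*$ duality step. One small correction in the cut-off estimate: on the annulus $\{R\le\abs{\x}\le 2R\}$ one has $\abs{\tilde v_n}\le C L R$ (the potential grows linearly), so the error term is bounded by $C'L\int_{\abs{\x}>R}\abs{\bar\g}$ rather than by $\tfrac{C}{R}\int_{\abs{\x}>R}\abs{\bar\g}$ as you wrote---the linear growth of $\tilde v_n$ exactly cancels the $1/R$ decay of $\nabla\eta_R$---but this still tends to $0$ by integrability of $\bar\g$, so your argument stands.
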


An easy consequence of the first assertion of the above proposition is the following description of the bidual of $\F(\Omega)$, i.e., of the dual of $\Lip_0(\Omega)$.

\begin{prop}\label{P:bidual}
The adjoint mapping $T^*$ is an isometry of $\Ma(\Omega,E)/X(\Omega)^\perp$ onto $\Lip_0(\Omega)^*$.
\end{prop}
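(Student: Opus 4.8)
The plan is to deduce Proposition~\ref{P:bidual} from the first assertion of Proposition~\ref{P:popisCKK} together with the duality identification of $\Ma(\Omega,E)$ recalled in Section~\ref{s:vector}. The key observation is that $\Lip_0(\Omega)^*=\F(\Omega)^{**}$, and by standard Banach-space duality the bidual of a quotient $L^1(\Omega,E)/Y(\Omega)$ is the quotient of $L^1(\Omega,E)^{**}$ by the weak$^*$-closure (equivalently the bidual annihilator) of $Y(\Omega)$. Since by Proposition~\ref{P:popisCKK} the space $T^*$ identifies $L^1(\Omega,E)/Y(\Omega)$ isometrically with $\F(\Omega)$, I expect the whole statement to follow by taking biduals of this identification and reading off what each piece becomes.

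First I would recall that $L^1(\Omega,E)^{**}$ is canonically isometric to $\Ma(\Omega,E)$, the space of $E$-valued finitely additive Borel measures vanishing on Lebesgue-null sets, via the duality with $L^\infty(\Omega,E^*)$ established in Section~\ref{s:vector} (this is the dual of the dual $L^\infty(\Omega,E^*)$ of $L^1(\Omega,E)$). Under this identification, the canonical embedding of $L^1(\Omega,E)$ into its bidual becomes the embedding $\g\mapsto \g\,\di\lambda$ of absolutely continuous measures into $\Ma(\Omega,E)$. Next, for the quotient $L^1(\Omega,E)/Y(\Omega)$, the general duality principle says $(L^1(\Omega,E)/Y(\Omega))^*=Y(\Omega)^\perp=X(\Omega)$ (this is exactly the second assertion of Proposition~\ref{P:popisCKK}), and taking one more dual gives $(L^1(\Omega,E)/Y(\Omega))^{**}=X(\Omega)^*=\Ma(\Omega,E)/X(\Omega)^\perp$, the last equality again by the standard subspace/quotient duality: the dual of a weak$^*$-closed subspace $X(\Omega)$ of $L^\infty(\Omega,E^*)=\Ma(\Omega,E)^*$ is the quotient of the predual-level space $\Ma(\Omega,E)$ by the annihilator $X(\Omega)^\perp$.

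Finally I would assemble these identifications. Taking the bidual of the isometry $T^*:L^1(\Omega,E)/Y(\Omega)\to\F(\Omega)$ from Proposition~\ref{P:popisCKK} yields an isometry $(T^*)^{**}:(L^1(\Omega,E)/Y(\Omega))^{**}\to\F(\Omega)^{**}=\Lip_0(\Omega)^*$, and the left-hand side was just computed to be $\Ma(\Omega,E)/X(\Omega)^\perp$. One checks that $(T^*)^{**}$ agrees with the map induced by $T^*$ (equivalently, with the adjoint of $T:\Lip_0(\Omega)\to X(\Omega)\subset L^\infty(\Omega,E^*)$) on this representation, so it is legitimate to keep calling it $T^*$. \textbf{The main obstacle} I anticipate is purely bookkeeping rather than conceptual: one must verify that the several canonical identifications -- the isometry $L^1(\Omega,E)^{**}\cong\Ma(\Omega,E)$, the subspace/quotient duality, and the biduality of $T^*$ -- are mutually compatible, so that the composite map really is the adjoint $T^*$ acting on $\Ma(\Omega,E)/X(\Omega)^\perp$ and not merely an abstractly isometric copy of it. I would address this by checking commutativity of the relevant diagrams on the dense subspace of absolutely continuous (indeed $L^1$) measures, where all maps are concrete, and then passing to the weak$^*$-closure.
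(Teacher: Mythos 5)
Your proof is correct, but it takes a genuinely more roundabout route than the paper. The paper derives Proposition~\ref{P:bidual} as an immediate consequence of the \emph{first} assertion of Proposition~\ref{P:popisCKK} alone: $T$ is an isometric embedding of $\Lip_0(\Omega)$ into $L^\infty(\Omega,E^*)$ with range $X(\Omega)$, and $\Ma(\Omega,E)=L^\infty(\Omega,E^*)^*$; since the adjoint of an isometric embedding is a metric surjection whose kernel is the annihilator of the range (Hahn--Banach extension plus the definition of the adjoint), $T^*:\Ma(\Omega,E)\to\Lip_0(\Omega)^*$ is onto with kernel $X(\Omega)^\perp$, and the induced map on $\Ma(\Omega,E)/X(\Omega)^\perp$ is the desired onto isometry. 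In this route the map in the statement literally \emph{is} $T^*$ factored through its kernel, so there is nothing to check about compatibility of identifications. You instead bidualize the quotient-level isometry $T^*:L^1(\Omega,E)/Y(\Omega)\to\F(\Omega)$ from the third assertion, identify $(L^1(\Omega,E)/Y(\Omega))^{**}$ with $\Ma(\Omega,E)/X(\Omega)^\perp$ via the second assertion and subspace/quotient duality, and then must verify that $(T^*)^{**}$ agrees with the map induced by the adjoint of $T$; your plan for this step (both maps are weak$^*$-to-weak$^*$ continuous and agree on the canonical copy of $L^1(\Omega,E)/Y(\Omega)$, which is weak$^*$-dense by Goldstine's theorem) does work, but it is precisely the bookkeeping that the paper's one-step argument avoids, and it buys nothing extra here. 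Two minor corrections to your write-up: the identity ``$L^\infty(\Omega,E^*)=\Ma(\Omega,E)^*$'' in your second paragraph is backwards -- the duality recalled in Section~\ref{s:vector} is $\Ma(\Omega,E)=L^\infty(\Omega,E^*)^*$; and the duality you invoke at that point is simply $X^*\cong Z^*/X^\perp$ for a \emph{closed} subspace $X$ of a Banach space $Z$ (here $Z=L^\infty(\Omega,E^*)$), so the weak$^*$-closedness of $X(\Omega)$ plays no role in that step.
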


It follows that the question on complementability of $\F(\Omega)$ in its bidual can be reduced to the question on complementability of $L^1(\Omega,E)/Y(\Omega)$ in $\Ma(\Omega,E)/X(\Omega)^\perp$. Note that $L^1(\Omega,E)/Y(\Omega)$ is embedded in $\Ma(\Omega,E)/X(\Omega)^\perp$ in the following way:
Given $\f\in L^1(\Omega,E)$, by $\f\lambda$ we denote the $E$-valued measure on $\Omega$ with density $\f$ with respect to the Lebesgue measure. The mapping $\f\mapsto\f\lambda$ is an isometry of $L^1(\Omega,E)$ onto
$\Ma(\Omega,E)\cap\Ms(\Omega,E)$ (it is onto by the Radon-Nikod\'ym theorem). Hence, we can identify
$L^1(\Omega,E)$ and $\Ma(\Omega,E)\cap\Ms(\Omega,E)$ . Moreover, using this identification, one has $Y(\Omega)=X(\Omega)_\perp=X(\Omega)^\perp\cap \Ma(\Omega,E)\cap\Ms(\Omega,E)$, therefore the inclusion  $L^1(\Omega,E)/Y(\Omega)\subset\Ma(\Omega,E)/X(\Omega)^\perp$
can be obtained by the factorization of the inclusion $L^1(\Omega,E)\subset\Ma(\Omega,E)$ along $Y(\Omega)$, i.e., by the formula
$$\f+Y(\Omega)\mapsto \f\lambda+X(\Omega)^\perp.$$

Note that $L^1(\Omega,E)$ is complemented in its bidual $\Ma(\Omega,E)$ via the projection
$\bS$ defined in \eqref{eq:bS,bA}. In case $E$ is equipped with the $\ell^1$-norm, the projection $\bS$ is even an $L$-projection. A natural idea would be to try to factorize the projection $\bS$ along $X(\Omega)^\perp$. However, it is not possible, as witnessed by Proposition~\ref{P:protipriklady} below.

\smallskip

Now we are going to prove the key proposition on approximation by test functions. Although the representation of
$\F(\Omega)$ and $\F(\Omega)^{**}$ described above works for an arbitrary nonempty open convex subset $\Omega\subset E$, we are able to prove the approximation only for $\Omega=E$. We do not know whether a similar result holds for a general $\Omega$ (cf. Question~\ref{Q:Omega}). Therefore in the sequel we will consider only the case $\Omega=E=(\er^d,\norm{\cdot}_E)$. Instead of $L^1(E,E)$, $L^\infty(E,E^*)$, $\M(E,E)$ etc. we will write $L^1(\er^d,E)$, $L^\infty(\er^d,E^*)$, $\M(\er^d,E)$ etc. This is done in order to stress that the structure of these spaces does not depend on the concrete norm on the domain.

To prove the approximation results, we need the following easy lemma.

\begin{lemma}\label{L:bump} For any $\varepsilon>0$ there exists a $\C^\infty$ function
$\psi:\er^d\to\er$ with the properties
\begin{itemize}
\item $0\le\psi\le 1$ on $\er^d$,
\item $\psi(\x)=1$ whenever $\norm{\x}_E\le 1$,
\item $\psi(\x)=0$ whenever $\norm{\x}_E\ge 2$,
\item $\norm{\nabla\psi}_{L^\infty(\er^d,E^*)}\le 1+\varepsilon$.
\end{itemize}
\end{lemma}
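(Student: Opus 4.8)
The plan is to reduce to a one-dimensional profile composed with the norm and then mollify. The guiding observation is that the transition region $\{\x\setsep 1\le\norm{\x}_E\le2\}$ has width exactly $1$ in the $E$-norm, so a function dropping from $1$ to $0$ across it can be taken $1$-Lipschitz with respect to $\norm{\cdot}_E$; the small surplus $\varepsilon$ will be spent on the smoothing and on leaving safety margins.

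First I would fix a small $\delta>0$ (to be specified) and choose a $1/(1-2\delta)$-Lipschitz profile $\rho:\er\to[0,1]$ with $\rho(t)=1$ for $t\le1+\delta$ and $\rho(t)=0$ for $t\ge2-\delta$ (for instance piecewise linear, interpolating with slope $-1/(1-2\delta)$). Setting $g=\rho\circ\norm{\cdot}_E$ produces a compactly supported function $g:\er^d\to[0,1]$ which is $1/(1-2\delta)$-Lipschitz with respect to $\norm{\cdot}_E$, since $\norm{\cdot}_E$ is itself $1$-Lipschitz with respect to $\norm{\cdot}_E$ by the reverse triangle inequality. By Rademacher's theorem $g$ is differentiable almost everywhere, and the Lipschitz bound translates into $\norm{\nabla g}_{L^\infty(\er^d,E^*)}\le 1/(1-2\delta)$: at any point $\x$ of differentiability the directional derivative of $g$ in a direction $\vv$ is bounded by $\tfrac{1}{1-2\delta}\norm{\vv}_E$, which says precisely that $\norm{\nabla g(\x)}_{E^*}\le 1/(1-2\delta)$.

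Next I would smoothen $g$ by convolution. Take an approximate unit $(u_n)$ as in the Preliminaries and choose $n$ so large that $\spt u_n\subset\{\x\setsep\norm{\x}_E<\delta\}$ (possible since $\spt u_n=\tfrac1n\spt\phi$ shrinks to the origin), and put $\psi=u_n*g$. Then $\psi$ is $\C^\infty$; and since $u_n\ge0$, $\int u_n=1$ and $0\le g\le1$, also $0\le\psi\le1$. If $\norm{\x}_E\le1$ then $\norm{\x-\y}_E\le1+\delta$ for every $\y\in\spt u_n$, so $g(\x-\y)=1$ and $\psi(\x)=\int u_n(\y)\di\y=1$; symmetrically, if $\norm{\x}_E\ge2$ then $\norm{\x-\y}_E\ge2-\delta$ for every such $\y$, whence $g(\x-\y)=0$ and $\psi(\x)=0$. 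Finally $\nabla\psi=u_n*\nabla g$, so Lemma~\ref{L:aproxunitodhad}(ii) applied to the bounded measurable $E^*$-valued map $\nabla g$ gives $\norm{\nabla\psi}_{L^\infty(\er^d,E^*)}\le\norm{\nabla g}_{L^\infty(\er^d,E^*)}\le 1/(1-2\delta)$. Choosing $\delta$ small enough that $1/(1-2\delta)\le1+\varepsilon$ finishes the argument.

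The only real obstacle is that $\norm{\cdot}_E$ need not be smooth (it is merely convex, hence differentiable only almost everywhere), so one cannot simply compose a smooth profile with the norm and differentiate pointwise. This is exactly why the construction is arranged so that smoothness is produced only at the very end, by mollification, while the crucial gradient estimate is first established for the Lipschitz function $g$ and then carried through the convolution by the averaging bound of Lemma~\ref{L:aproxunitodhad}: convolution with a nonnegative kernel of total mass $1$ cannot increase the essential supremum of $\norm{\nabla g}_{E^*}$.
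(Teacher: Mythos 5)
Your proof is correct and follows essentially the same route as the paper's: the paper also takes the piecewise linear profile $\frac{2-\delta-\norm{\x}_E}{1-2\delta}$ (your $g=\rho\circ\norm{\cdot}_E$), notes it is $\frac{1}{1-2\delta}$-Lipschitz in $\norm{\cdot}_E$, and convolves with an approximate unit supported in $U_E(\o,\delta)$. You merely spell out the final verifications (the values on $\norm{\x}_E\le1$ and $\norm{\x}_E\ge2$, and the gradient bound via $\nabla\psi=u_n*\nabla g$ and Lemma~\ref{L:aproxunitodhad}) that the paper leaves implicit.
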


\begin{proof} Fix $\delta\in(0,\frac12)$ such that $\frac{1}{1-2\delta}<1+\varepsilon$.
Define $\psi_0$ by 
$$\psi_0(\x)=\begin{cases}
1 & \mbox{ if }\norm{\x}_E\le 1+\delta,\\
\frac{2-\delta-\norm{\x}_E}{1-2\delta} & \mbox{ if }1+\delta\le\norm{\x}_E\le2-\delta,
\\ 0 & \mbox{ if }\norm{\x}_E\ge 2-\delta.
\end{cases}$$
Then $\psi_0$ is $\frac{1}{1-2\delta}$-Lipschitz, thus $(1+\varepsilon)$-Lipschitz in the norm $\norm{\cdot}_E$.

Further, let $(u_n)$ be an approximate unit in $\Dc(\er^d)$. Fix $n$ so large that $\spt u_n\subset U_E(\o,\delta)$ and set $\psi=u_n*\psi_0$. Then $\psi$ is the sought function.
\end{proof}

\begin{prop}\label{P:aproximace} \ 
\begin{itemize}
\item[(i)] For any $F\in\Lip_0(E)$  and any $\varepsilon>0$ there is a sequence $(F_n)$ in $\Dc(\er^d)$ such that
\begin{itemize}
\item $\norm{F_n}_{lip}\le \norm{F}_{lip}$ for $n\in\en$,
\item the sequence $(F_n-F_n(\o))$ weak$^*$ converges to $F$.
\end{itemize}
\item[(ii)] $\{\nabla \varphi\setsep \varphi\in\Dc(\er^d)\}$ is norm-dense in $\C_0(\er^d,E^*)\cap X(E)$.
\item[(iii)] For any $\g\in\C_0(\er^d,E^*)$ we have
$$\dist(\g,X(E))= \dist(\g,X(E)\cap \C_0(\er^d,E^*)).$$
\end{itemize}
Let us point out that the density in (ii) and the distance in (iii) is considered with respect to the standard norm of $L^\infty(\er^d,E^*)$
as this space serves as the ambient space here.
\end{prop}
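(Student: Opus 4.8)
My plan is to treat (i) by a direct construction and to handle (ii) and (iii) by a common duality-plus-mollification argument.

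\emph{Part (i).} I would combine a value-truncation that does not increase the Lipschitz constant with a mollification. Multiplying $F$ by a cut-off $\psi(\cdot/R)$ is useless, since the term $F\nabla\psi(\cdot/R)$ is of order $\norm{F}_{lip}$; instead I truncate the \emph{values} of $F$ against a linear envelope. Writing $L=\norm{F}_{lip}$ and $\theta_R(\x)=L\,(2R-\norm{\x}_E)^+$, I set
$$ F_R=\max\bigl(-\theta_R,\ \min(F,\theta_R)\bigr). $$
Since $|F(\x)|\le L\norm{\x}_E\le\theta_R(\x)$ on $\{\norm{\x}_E\le R\}$ while $\theta_R=0$ outside $\{\norm{\x}_E\le 2R\}$, the function $F_R$ is a lattice combination of $L$-Lipschitz functions, hence $L$-Lipschitz; it agrees with $F$ on $\{\norm{\x}_E\le R\}$, is supported in $\{\norm{\x}_E\le 2R\}$, and $F_R(\o)=0$. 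Then $u_m*F_R\in\Dc(\er^d)$ is smooth and compactly supported, with $\norm{u_m*F_R}_{lip}\le\norm{F_R}_{lip}\le L$, and converges to $F_R$ uniformly by Lemma~\ref{L:aproximateunitconvergence}(iii). A diagonal choice $F_n=u_{m_n}*F_{R_n}$ with $R_n\to\infty$ and $m_n$ so large that $\norm{F_n-F_{R_n}}_\infty<\tfrac1n$ gives $\norm{F_n}_{lip}\le L$, $F_n(\o)\to0$, and $F_n(\x)\to F(\x)$ for each fixed $\x$; being bounded in $\Lip_0(E)$, the sequence converges weak$^*$, so $F_n-F_n(\o)\to F$ weak$^*$. (The envelope yields the bound $\norm{F}_{lip}$ with no loss, so $\varepsilon$ is not actually needed.)

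\emph{Part (ii).} Every $\nabla\varphi$ with $\varphi\in\Dc(\er^d)$ is curl-free and compactly supported, so it lies in $X(E)\cap\C_0(\er^d,E^*)$, and this subspace is norm-closed (as $X(E)=Y(E)^\perp$ is weak$^*$-closed). Since $\C_0(\er^d,E^*)^*=\Ms(\er^d,E)$, by Hahn--Banach it suffices to show that every $\bmu\in\Ms(\er^d,E)$ with $\ev{\bmu}{\nabla\varphi}=0$ for all $\varphi\in\Dc(\er^d)$ annihilates all of $X(E)\cap\C_0$. The stated condition means precisely $\div\bmu=0$ in $\Dcp(\er^d)$. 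The crucial step is to mollify the measure: $\h_m:=u_m*\bmu$ is smooth, $\norm{\h_m}_{L^1}\le\norm{\bmu}$, and $\div\h_m=u_m*\div\bmu=0$, so $\h_m\in Y(E)$. Consequently, for any $\f\in X(E)=Y(E)^\perp$ one has $\ev{\h_m}{\f}=0$; transferring the convolution onto $\f$ yields $\ev{\bmu}{\check u_m*\f}=0$, and if in addition $\f\in\C_0$ then $\check u_m*\f\to\f$ uniformly (Lemma~\ref{L:aproximateunitconvergence}(iii)), whence $\ev{\bmu}{\f}=0$, as required. Density follows.

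\emph{Part (iii).} The inequality $\dist(\g,X(E))\le\dist(\g,X(E)\cap\C_0)$ is trivial. For the reverse, note first that $\g$ and the subspace lie in $\C_0$, so the $L^\infty$-distance $\dist(\g,X(E)\cap\C_0)$ is computed by sup-norms and, by Hahn--Banach in $\C_0(\er^d,E^*)$, equals $\sup\{|\ev{\bmu}{\g}|:\bmu\in\Ms(\er^d,E),\ \norm{\bmu}\le1,\ \bmu\perp X(E)\cap\C_0\}$, whereas $\dist(\g,X(E))=\dist(\g,Y(E)^\perp)=\sup\{|\ev{\g}{\h}|:\h\in Y(E),\ \norm{\h}_{L^1}\le1\}$, this being the distance from a functional to an annihilator. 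Any admissible $\bmu$ is divergence-free (it annihilates $\nabla\Dc\subseteq X(E)\cap\C_0$), so $\h_m:=u_m*\bmu\in Y(E)$ with $\norm{\h_m}_{L^1}\le1$, and
$$ |\ev{\bmu}{\g}|=\lim_m|\ev{\bmu}{\check u_m*\g}|=\lim_m|\ev{\h_m}{\g}|\le\dist(\g,X(E)), $$
using $\check u_m*\g\to\g$ uniformly. Taking the supremum over $\bmu$ gives the desired inequality.

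\emph{Main obstacle.} For (i) the point is that the truncation must cost nothing in the Lipschitz constant, which rules out cut-off multiplication and is resolved by the linear-envelope median. For (ii) and (iii) the difficulty is that the natural dual objects are measures, possibly singular, rather than divergence-free $L^1$ fields; the mollification $u_m*\bmu$ converts them into genuine elements of $Y(E)$ while, because mollification converges uniformly on $\C_0$, keeping the relevant pairings under control. I expect this mollification-of-the-annihilating-measure step to be the heart of the argument.
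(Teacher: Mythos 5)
Your proof is correct, and while your part (i) follows essentially the paper's own route, your parts (ii) and (iii) take a genuinely different one. In (i) both you and the paper truncate $F$ to a compactly supported $L$-Lipschitz function agreeing with $F$ on a large ball and then mollify; the paper invokes an abstract Lipschitz extension of the function defined on $B_E(\o,n)\cup(E\setminus U_E(\o,2n))$, whereas your lattice median $\max(-\theta_R,\min(F,\theta_R))$ does the same job explicitly. For (ii) and (iii) the paper works on the function side: it mollifies $\f$, passes to an antiderivative $F$ with $F'=\f$, and cuts off by $\psi(\x/n)F(\x)$, using the sublinear growth of $F$ (the Ces\`aro argument with the $\alpha_n$) to make the cut-off error vanish uniformly; for (iii) it additionally needs a geometric patching construction (a Lipschitz estimate on a distant annulus, extension by zero, final mollification). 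You instead work on the dual side: via Hahn--Banach in $\C_0(\er^d,E^*)$, whose dual is $\Ms(\er^d,E)$, both statements reduce to the claim that any finite $\sigma$-additive measure $\bmu$ annihilating $\{\nabla\varphi\setsep\varphi\in\Dc(\er^d)\}$, i.e.\ with $\div\bmu=0$, acts in the relevant pairings like an element of $Y(E)$; the key step is that $u_m*\bmu$ is a smooth divergence-free $L^1$ field, hence lies in $Y(E)$, so that the duality $X(E)=Y(E)^\perp$ of Proposition~\ref{P:popisCKK} applies, after which the convolution is transferred back onto the continuous function, where $\check u_m*\f\to\f$ uniformly. Your route is shorter, handles (ii) and (iii) by one unified mechanism, and avoids all the Lipschitz geometry, at the price of using the full strength of $X(E)=Y(E)^\perp$ together with $\C_0(\er^d,E^*)^*=\Ms(\er^d,E)$, $L^\infty(\er^d,E^*)=L^1(\er^d,E)^*$ and the standard Hahn--Banach distance formulas, while the paper's constructive proof uses only the description of $X(E)$ as gradients of Lipschitz functions and produces explicit approximants. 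One remark on the logical architecture: your central claim --- that a divergence-free measure in $\Ms(\er^d,E)$ annihilates $X(E)\cap\C_0(\er^d,E^*)$ --- is exactly what the paper later deduces \emph{from} part (ii) in the proof of Lemma~\ref{L:rozsirovanizY}; you prove it directly by mollification and derive (ii) from it, so the implication runs in the opposite direction, and since Proposition~\ref{P:popisCKK} is quoted from an earlier paper, no circularity arises.
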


\begin{proof} Let $(u_n)$ be an approximate unit in $\Dc(\er^d)$. Without loss of generality suppose that $\spt u_n\subset U_E(\o,1)$ for each $n\in\en$.

(i) Fix $F\in \Lip_0(E)$ and set $L=\norm{F}_{lip}$. For each $n\in\en$ define 
a function $\tilde{F}_{n,0}$ by
$$\tilde{F}_{n,0}(\x)=\begin{cases} F(\x) & \mbox{ if }\norm{\x}_E\le n,\\ 0 & \mbox{ if }\norm{\x}_E\ge 2n.
\end{cases}$$
We claim that $\tilde{F}_{n,0}$ is $L$-Lipschitz on its domain. It is clear that it is $L$-Lipschitz on $B_E(\o,n)$ and on $E\setminus U_E(\o,2n)$. So, fix $\x,\y\in E$ such that $\norm{\x}_E\le n$ and $\norm{\y}_E\ge2n$. Then
$$\abs{\tilde{F}_{n,0}(\x)-\tilde{F}_{n,0}(\y)}=\abs{F(\x)}\le L\cdot\norm{\x}_E\le Ln\le L\norm{\x-\y}_E.$$
Thus we can extend $\tilde{F}_{n,0}$ to an $L$-Lipschitz function $\tilde{F}_n:E\to\er$.
Set $F_n=u_n*\tilde{F}_n$. Then $F_n\in\Dc(\er^d)$, $F_n$ is $L$-Lipschitz,
as
$$\norm{F_n}_{lip}=\norm{\nabla F_n}_{L^\infty(\er^d,E^*)}=\norm{u_n*\tilde{F}'_n}_{L^\infty(\er^d,E^*)}\le\norm{\tilde{F}'_n}_{L^\infty(\er^d,E^*)}=\norm{\tilde F_n}_{lip}=L.$$
So, it remains to show that $(F_n-F_n(\o))$ converges weak$^*$ to $F$. Since the sequence is bounded by $L$ in $\Lip_0(E)$, it is enough to prove the pointwise convergence.
To this end fix $x\in E$. If $n\in\en$ is such that $n\ge\norm{\x}_E+1$, then
$$F_n(\x)=(u_n*\tilde{F}_n)(\x)=(u_n*F)(\x).$$
Since $u_n*F\to F$ pointwise, we deduce that $F_n(\x)\to F(\x)$, thus $F_n\to F$ pointwise.
In particular, $F_n(\o)\to F(\o)=0$, so $F_n-F_n(\o)\to F$ pointwise. This completes the proof of (i).

(ii) Fix $\f\in\C_0(\er^d,E^*)\cap X(E)$. Then $u_n*\f$ uniformly converges to $\f$ and, moreover,
$u_n*\f$ are $\C^\infty$ functions. Moreover, since $\spt u_n\subset U_E(\o,1)$, for each $\x\in\er^d$ we have  (by Lemma~\ref{L:aproxunitodhad})
$$\norm{(u_n*\f)(\x)}_{E^*}\le\sup\{\norm{\f(\y)}\setsep \norm{\y-\x}_E\le 1\},$$
thus $u_n*\f\in\C_0(\er^d,E^*)$. Further, $u_n*\f\in X(E)$, as
\begin{equation}\label{eq:zachovejXE}
\partial_i(u_n*f_{j})=u_n*\partial_if_{j} =u_n*\partial_jf_i=\partial_j(u_n*f_{i})\mbox{ for }i,j\in\{1,\dots,d\}.
\end{equation}
Therefore, we can suppose without loss of generality that $\f$ is moreover $\C^\infty$.

Let $F\in\Lip_0(E)$ be such that $F'=\f$. Note that $F$ is a $\C^\infty$ function.
For $n\in\en$ define $$F_n(\x)=\psi\left(\frac{\x}{n}\right)\cdot F(\x),\quad \x\in E.$$ 
To complete the proof it is enough to show that that the sequence $(\nabla F_n)$ uniformly converges to $\f$. 

For $n\in\en$ set
$$\alpha_n=\sup\{\norm{\f(\x)}_{E^*}\setsep\norm{\x}_E\ge n-1\}.$$
Then  $\alpha_n\searrow 0$. To estimate $\norm{\f-\nabla F_n}_{L^\infty(\er^d,E^*)}$ we distinguish three cases:

\begin{itemize}
\item[(a)] $\norm{\x}_E\le n$. Then $\norm{\f(\x)-\nabla F_n(x)}_{E^*} = 0$.

\item[(b)] $\norm{\x}_E\ge 2n$. Then
$$\norm{\f(\x)-\nabla F_n(x)}_{E^*} = \norm{\f(\x)}_{E^*}\le\alpha_{2n+1}.$$

\item[(c)] $n<\norm{\x}_E<2n$. Then
$$\begin{aligned}
\norm{\f(\x)-\nabla F_n(x)}_{E^*}& = \norm{\f(\x)-\frac1n\nabla\psi\left(\frac{\x}n\right)\cdot F(\x) - \psi\left(\frac{\x}n\right)\cdot \f(\x)}_{E^*}
\\&\le\left(1-\psi\left(\frac{\x}n\right)\right)\cdot\norm{\f(\x)}_{E^*}+\frac{1+\varepsilon}{n}\abs{F(\x)}.
\end{aligned}
$$
It is easy to see that, for each $k\in\en$,
$$\sup_{\norm{\x}_E\le k}\abs{F(\x)}\le \alpha_1+\dots+\alpha_{k}.$$
Hence, we have
$$\norm{\f(\x)-\nabla F_n(x)}_{E^*}\le \alpha_{n+1} + (1+\varepsilon)\frac{ \alpha_1+\dots+\alpha_{2n+1}}{n}.$$
\end{itemize}
\smallskip

Now, summarizing and taking into account that $\alpha_n\to 0$ (and therefore $(\alpha_n)$ is Ces\`aro convergent to $0$), we conclude that $\nabla F_n$ converges uniformly to $\f$, which completes the proof of (ii).

(iii) Fix $\g\in\C_0(\er^d,E^*)$. The inequality `$\le$' is obvious, let us prove the converse one. 
Set $\theta=\dist(\g,X(E))$ and fix any $\varepsilon>0$. Find $\f\in X(E)$ with $\norm{\g-\f}_{L^\infty(\er^d,E^*)}<\theta+\varepsilon$. Since $\g\in\C_0(\er^d,E^*)$, we can find $r>0$ such that 
$\norm{\g(\x)}_{E^*}<\varepsilon$ whenever $\norm{\x}_E\ge r$. Then clearly
\begin{equation}\label{eq:mimokruh}
\norm{\f|_{\er^d\setminus U_E(\o,r)}}_{L^\infty(\er^d\setminus U_E(\o,r),E^*)}<\theta+2\varepsilon.
\end{equation}
Let $F\in\Lip_0(E)$ be such that $F'=\f$. Set $L=\norm{F}_{lip}=\norm{\f}_{L^\infty(\er^d,E^*)}$.
Fix $C>r+1$ and set $B=\{\x\in \er^d\setsep C-1\le \norm{\x}_E\le C\}$.
For $\x,\y\in B$ there are two possibilities:

If the segment $[\x,\y]$ does not meet $U_E(\o,r)$, then $$\abs{F(\x)-F(\y)}\le (\theta+2\varepsilon)\norm{\x-\y}_E.$$

Suppose that the segment $[\x,\y]$ meets $U_E(\o,r)$. This intersection is a segment, let $\u$ and $\vv$ be its endpoints, denoted such that $\vv$ is between $\u$ and $\y$. Then $\norm{\x-\y}_E\geq \norm{\x-\u}_E+\norm{\y-\vv}_E\geq 2(C-1-r)$ and 
$$\begin{aligned}
\abs{F(\x)-F(\y)}&\le \abs{F(\x)-F(\u)}+\abs{F(\u)-F(\vv)}+\abs{F(\vv)-F(\y)}
\\&\le (\theta+2\varepsilon) \norm{\x-\u}_E + L\norm{\u-\vv}_E+  (\theta+2\varepsilon)\norm{\vv-\y}_E
\\&\le  (\theta+2\varepsilon)\norm{\x-\y}_E+2Lr
= \left(\theta+2\varepsilon+\frac{2Lr}{\norm{\x-\y}_E}\right)\norm{\x-\y}_E
\\&\le \left( \theta+2\varepsilon+\frac{Lr}{C-1-r}\right)\norm{\x-\y}_E.\end{aligned}
$$
By choosing $C$ large enough, we can assume that $F$ is $(\theta+3\varepsilon)$-Lipschitz on $B$.

Further, fix $D>C$, set $B_1=B\cup (\er^d\setminus U_E(\o,D))$ and define a function $\tilde{F}_0$ on $B_1$ by
$$\tilde F_0(\x)=\begin{cases}
F(\x)&\mbox{ if }\x\in B,\\ 0 &\mbox{ if }\norm{\x}_E\ge D.
\end{cases}$$
Fix $\x\in B$ and $\y\in\er^d$ with $\norm{\y}_E\ge D$. Then
$$\abs{\tilde{F}_0(\x)-\tilde{F}_0(\y)}=\abs{F(\x)}\le LC\le \frac{LC}{D-C}\norm{\x-\y}_E.
$$
So, if $D$ is large enough, we get that $\tilde{F}_0$ is $(\theta+3\varepsilon)$-Lipschitz on $B_1$. Let $\tilde{F}_1$ be a $(\theta+3\varepsilon)$-Lipschitz extension of $\tilde{F}_0$ to $E\setminus U_E(\o,C-1)$ and 
$$\tilde{F}(\x)=\begin{cases}
F(\x)&\mbox{ if }\norm{\x}_E\le C,\\ \tilde{F}_1(\x), & \mbox{ if }\norm{\x}_E\ge C-1.
\end{cases}$$
Then clearly $\tilde{F}\in\Lip_0(E)$. Set $\tilde{\f}=\tilde{F}'$. Then $\tilde{\f}\in X(E)$ and has compact support. We claim that $$\norm{\tilde{\f}-\g}_{L^\infty(\er^d,E^*)}\le\theta+4\varepsilon.$$
Indeed, on $U_E(\o,C)$ we have $\tilde{\f}=\f$, hence
$$\norm{\tilde{\f}-\g}_{L^\infty(U_E(0,C),E^*)}=\norm{\f-\g}_{L^\infty(U_E(0,C),E^*)}<\theta+\varepsilon.$$
Further, on $\er^d\setminus B_E(\o,C-1)$  the function $\tilde F$ is $(\theta+3\varepsilon)$-Lipschitz, hence
$$\begin{aligned}
\norm{\tilde{\f}-\g}_{L^\infty(\er^d\setminus B_E(0,C-1),E^*)}&\le \norm{\tilde{\f}}_{L^\infty(\er^d\setminus B_E(0,C-1),E^*)}+\norm{\g}_{L^\infty(\er^d\setminus B_E(0,C-1),E^*)}\\&<\theta+4\varepsilon.\end{aligned}$$

Finally, consider the functions $u_n*\tilde{\f}$. They belong to $C_0(\er^d,E^*)$ (in fact, even to $\Dc(\er^d,E^*)$) and also to $X(E)$ (by the computation \eqref{eq:zachovejXE} above). Further
$$\norm{u_n*\tilde{\f}-\g}_{L^\infty(\er^d,E^*)}\le \norm{u_n*(\tilde{\f}-\g)}_{L^\infty(\er^d,E^*)}
+\norm{u_n*\g-\g}_{L^\infty(\er^d,E^*)}<\theta+5\varepsilon$$
if $n$ is large enough, as the first term is bounded by $\theta+4\varepsilon$ by Lemma~\ref{L:aproxunitodhad} and the second one goes to zero (because $u_n*\g$ converges to $\g$ uniformly by Lemma~\ref{L:aproximateunitconvergence}).

Since $\varepsilon>0$ was arbitrary, this completes the proof.
\end{proof}

\section{On divergence of measures}\label{s:divergence}

In this section we investigate the space $\Ms(\er^d,E)$ and its subspaces
$$\begin{aligned}
Y_\sigma(E)&=\{\bmu\in\Ms(\er^d,E)\setsep\div\bmu=0\mbox{ in }\Dcp(\er^d)\},\\
\Ms^{div}(E)&=\{\bmu\in\Ms(\er^d,E)\setsep\exists\f\in L^1(\er^d,E): \div\f=\div\bmu\}
\\& =(\Ms(\er^d,E)\cap\Ma(\er^d,E))+Y_\sigma(E).
\end{aligned}$$
The aim of this section is to prove three results. The first one is a characterization of the subspace $\Ms^{div}(E)$ in Proposition~\ref{P:spojitost}.
Similar results in a different context can be found in \cite[Proposition 8.12]{maly14} or \cite[Lemma 2.1]{boChaJi}. The second result is the canonical isomorphism of $L^1(\er^d,E)/Y(E)$ and $\Ms^{div}(E)/Y_\sigma(E)$ (see Lemma~\ref{l:isomorfismusJ}). The last result is Proposition~\ref{p:podminka} where a stability property of $\Ms^{div}(E)$ is proved. It will be used in the next section.

We start by the following extension lemma

\begin{lemma}\label{L:rozsirovanizY} For any 
$\bmu\in Y_\sigma(E)$ there is some $\bnu\in X(E)^\perp$ such that $\bR(\bnu)=\bmu$ and $\norm{\bnu}\le \norm{\bmu}$.
\end{lemma}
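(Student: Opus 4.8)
The plan is to produce $\bnu$ by the Hahn--Banach theorem, exploiting that $\Ma(\er^d,E)$ is canonically the dual of $L^\infty(\er^d,E^*)$ and that $X(E)^\perp$ is precisely the annihilator of $X(E)$ in this duality. On the subspace $Z:=\C_0(\er^d,E^*)+X(E)$ of $L^\infty(\er^d,E^*)$ I would define a linear functional by
\begin{equation*}
\varphi(\g+\f)=\ip{\bmu}{\g},\qquad \g\in\C_0(\er^d,E^*),\ \f\in X(E),
\end{equation*}
where $\ip{\bmu}{\g}=\int\ip{\g}{\di\bmu}$ is the pairing of $\bmu\in\Ms(\er^d,E)\cong\C_0(\er^d,E^*)^*$ with $\g$. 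Granting that $\varphi$ is well defined and that $\abs{\varphi(\g+\f)}\le\norm{\bmu}\,\norm{\g+\f}_\infty$, a Hahn--Banach extension $\psi$ of $\varphi$ to all of $L^\infty(\er^d,E^*)$ with $\norm{\psi}\le\norm{\bmu}$ is represented by some $\bnu\in\Ma(\er^d,E)$. Then $\norm{\bnu}\le\norm{\bmu}$; the vanishing of $\psi$ on $X(E)$ gives $\bnu\in X(E)^\perp$; and the agreement of $\psi$ with $\bmu$ on $\C_0(\er^d,E^*)$ yields $\bR(\bnu)=\bmu$ by the very definition of $\bR$.

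The crux is the estimate $\abs{\ip{\bmu}{\g}}\le\norm{\bmu}\,\norm{\g+\f}_\infty$, which also forces well-definedness (apply it when $\g+\f=0$). I would prove it by mollification. Fix an approximate unit $(u_n)$ and set $\bmu_n=u_n*\bmu$, the componentwise convolution. Since $\bmu$ is a finite $\sigma$-additive measure, $\bmu_n$ is a smooth field lying in $L^1(\er^d,E)$ with $\norm{\bmu_n}_{L^1}\le\norm{\bmu}$, and $\div\bmu_n=u_n*\div\bmu=0$, so $\bmu_n\in Y(E)$. As $\f\in X(E)=Y(E)^\perp$, the $L^\infty$--$L^1$ pairing gives $\ip{\f}{\bmu_n}=0$, hence
\begin{equation*}
\abs{\ip{\g}{\bmu_n}}=\abs{\ip{\g+\f}{\bmu_n}}\le\norm{\g+\f}_\infty\,\norm{\bmu_n}_{L^1}\le\norm{\g+\f}_\infty\,\norm{\bmu}.
\end{equation*}
It then remains to let $n\to\infty$: by Fubini $\ip{\g}{\bmu_n}=\int\ip{\check u_n*\g}{\di\bmu}$, where $\check u_n$ is the reflected (still approximate) unit; since $\g\in\C_0(\er^d,E^*)$ is uniformly continuous, $\check u_n*\g\to\g$ uniformly by Lemma~\ref{L:aproximateunitconvergence}, and as $\bmu$ is finite we conclude $\ip{\g}{\bmu_n}\to\ip{\bmu}{\g}$. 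Passing the displayed bound to the limit gives the required estimate.

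The main obstacle is exactly this estimate. The naive inequality $\abs{\ip{\bmu}{\g}}\le\norm{\bmu}\,\norm{\g}_\infty$ is useless, because $\norm{\g}_\infty$ may far exceed $\norm{\g+\f}_\infty$, and one cannot remedy this by pairing $\bmu$ directly against $\f$: the measure $\bmu$ need not be absolutely continuous, whereas $\f\in X(E)$ is defined only up to a Lebesgue-null set, so the pairing $\ip{\bmu}{\f}$ has no intrinsic meaning. The mollification is what circumvents this difficulty, transporting the problem into $L^1(\er^d,E)$, where the clean duality $X(E)=Y(E)^\perp$ is available and annihilates the $\f$-contribution; the only genuine care needed is the Fubini interchange and the uniform convergence of the reflected mollifier tested against the finite measure $\bmu$.
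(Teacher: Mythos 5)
Your proof is correct, and although it follows the same overall skeleton as the paper---define $\varphi(\g+\f)=\ip{\bmu}{\g}$ on $\C_0(\er^d,E^*)+X(E)$, bound its norm by $\norm{\bmu}$, extend by Hahn--Banach, and take the representing measure in $\Ma(\er^d,E)\cong L^\infty(\er^d,E^*)^*$---it establishes the crucial estimate by a genuinely different mechanism. The paper gets well-definedness from Proposition~\ref{P:aproximace}(ii) (gradients of test functions are dense in $\C_0(\er^d,E^*)\cap X(E)$, and a divergence-free $\bmu$ annihilates that intersection) and the norm bound from Proposition~\ref{P:aproximace}(iii), the distance identity $\dist(\g,X(E))=\dist(\g,X(E)\cap\C_0(\er^d,E^*))$, which replaces the $X(E)$-summand by a continuous one at the cost of $\varepsilon$. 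You instead mollify the measure itself: $u_n*\bmu$ lies in $Y(E)$ with $\norm{u_n*\bmu}_{L^1}\le\norm{\bmu}$, the duality $X(E)=Y(E)^\perp$ from Proposition~\ref{P:popisCKK} annihilates the $\f$-term exactly, and Fubini together with the uniform convergence $\check u_n*\g\to\g$ (valid since $\g\in\C_0(\er^d,E^*)$ and $\bmu$ is finite) passes the bound to the limit; well-definedness then comes for free from the estimate, as you note. What your route buys is complete independence from Proposition~\ref{P:aproximace}, whose proof (Lipschitz truncation and extension on annuli) is the most technical ingredient the paper invokes at this point; the paper loses nothing by relying on it, since that proposition is needed elsewhere anyway (e.g.\ in Proposition~\ref{P:spojitost}), so its proof of the lemma stays very short. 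Note also that both arguments exploit $\Omega=\er^d$ in an essential way---yours because $u_n*\bmu$ remains divergence-free only when convolution is performed over the whole space, the paper's because Proposition~\ref{P:aproximace} is proved only there---so neither sheds light on Question~\ref{Q:Omega}. Two small points you should make explicit when writing this up: the inequality $\norm{u_n*\bmu}_{L^1}\le\norm{\bmu}$ rests on $\norm{\int h\di\bmu}_E\le\int h\di\abs{\bmu}$ for $h\ge0$ (checked by pairing with $\z^*\in B_{E^*}$), and the identity $\div(u_n*\bmu)=0$ is verified by testing against $\varphi\in\Dc(\er^d)$, which gives $\ip{\div(u_n*\bmu)}{\varphi}=-\ip{\bmu}{\nabla(\check u_n*\varphi)}=0$ because $\check u_n*\varphi$ is again a test function.
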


\begin{proof} Note that $\bmu$ defines a functional on $\C_0(\er^d,E^*)$. We will extend it to a functional from $X(E)^\perp$. To this end we define the functional $\varphi$ on $\C_0(\er^d,E^*)+X(E)$ by the formula
$$\varphi(\f+\g)=\ip{\bmu}{\f}=\sum_{i=1}^d \int f_i\di\mu_i,\quad \f\in\C_0(\er^d,E^*), \g\in X(E).$$
First observe that $\varphi$ is well defined. Indeed, the assumption $\div\bmu=0$ means that $\bmu$ is zero on the space $\{\nabla\varphi\setsep\varphi\in\Dc(\er^d)\}$. By Proposition~\ref{P:aproximace}(ii) we get that $\bmu$ vanishes also on $\C_0(\er^d,E^*)\cap X(E)$.

Further, let us prove that $\norm{\varphi}\le \norm{\bmu}$. Let $\varepsilon>0$ be arbitrary. Let $\f\in\C_0(\er^d,E^*)$ and $\g\in X(E)$ be such that $\norm{\f+\g}\le 1$. Hence $\dist(\f,X(E))\le 1$. By Proposition~\ref{P:aproximace}(iii) there is $\h\in X(E)\cap\C_0(\er^d,E^*)$ such that $\norm{\f-\h}_{L^\infty(\er^d,E^*)}<1+\varepsilon$. Then
$$\abs{\varphi(\f+\g)}=\abs{\varphi(\f-\h+\g)}=\abs{\ip{\bmu}{\f-\h}}\le\norm{\bmu}\cdot(1+\varepsilon).$$
We deduce that $\norm{\varphi}\le (1+\varepsilon)\norm{\bmu}$. Since $\varepsilon>0$ was arbitrary, $\norm{\varphi}\le \norm{\bmu}$. 

Finally, by the Hahn-Banach extension theorem the functional $\varphi$ can be extended to a functional on $L^\infty(\er^d,E^*)$ with same norm. Its representing measure is the sought $\bnu$.
\end{proof}

As a consequence we get the following proposition.

\begin{prop}\label{p:projekceR} $\bR(X(E)^\perp)=Y_\sigma(E)$.
\end{prop}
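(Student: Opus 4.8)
The plan is to prove the equality $\bR(X(E)^\perp)=Y_\sigma(E)$ by establishing the two inclusions separately. The inclusion $\bR(X(E)^\perp)\supseteq Y_\sigma(E)$ is an immediate consequence of the extension lemma just proved: for any $\bmu\in Y_\sigma(E)$, Lemma~\ref{L:rozsirovanizY} produces $\bnu\in X(E)^\perp$ with $\bR(\bnu)=\bmu$, so $\bmu$ lies in the image $\bR(X(E)^\perp)$. Thus the whole content of the proposition is in the reverse inclusion.

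For the inclusion $\bR(X(E)^\perp)\subseteq Y_\sigma(E)$, I would start with an arbitrary $\bnu\in X(E)^\perp$ and set $\bmu=\bR(\bnu)$. Since $\bR$ maps into $\Ms(\er^d,E)$, we automatically have $\bmu\in\Ms(\er^d,E)$, so the only thing left to verify is that $\div\bmu=0$ in $\Dcp(\er^d)$, i.e.\ that $\ip{\bmu}{\nabla\varphi}=0$ for every test function $\varphi\in\Dc(\er^d)$. The key observation is that $\nabla\varphi$ is a $\C^\infty$ vector field with compact support, hence $\nabla\varphi\in\C_0(\er^d,E^*)$, and at the same time $\nabla\varphi\in X(E)$ (gradients automatically satisfy the symmetry of mixed partials $\partial_i\partial_j\varphi=\partial_j\partial_i\varphi$ defining $X(E)$). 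Therefore $\nabla\varphi\in\C_0(\er^d,E^*)\cap X(E)$.

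Now the definition of $\bR$ enters: $\bmu=\bR(\bnu)$ is precisely the measure representing the restriction to $\C_0(\er^d,E^*)$ of the functional that $\bnu$ defines on $\B_b(\er^d,E^*)$. Consequently, for the element $\nabla\varphi\in\C_0(\er^d,E^*)$ we have
$$
\ip{\bmu}{\nabla\varphi}=\ip{\bR(\bnu)}{\nabla\varphi}=\ip{\bnu}{\nabla\varphi}.
$$
Since $\nabla\varphi\in X(E)$ and $\bnu\in X(E)^\perp$, the right-hand side vanishes. Hence $\ip{\bmu}{\nabla\varphi}=0$ for every $\varphi\in\Dc(\er^d)$, which is exactly the statement $\div\bmu=0$, so $\bmu\in Y_\sigma(E)$.

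The argument is essentially a bookkeeping matching of the defining properties, so there is no serious obstacle; the only point requiring care is making sure the two characterizations of $\bR$ agree on the relevant element. Specifically, one must confirm that $\nabla\varphi$ genuinely lies in $\C_0(\er^d,E^*)$ (so that evaluating $\bmu$ against it is the same as evaluating $\bnu$ against it, by the very definition of $\bR$ as the representing measure of the restricted functional) and simultaneously in $X(E)$ (so that $\bnu$ annihilates it). Both facts are elementary—compact support gives the former and the symmetry of second derivatives gives the latter—but they are the crux that links the projection $\bR$ to the orthogonality condition defining $X(E)^\perp$.
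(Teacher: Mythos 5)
Your proof is correct and takes essentially the same route as the paper: the inclusion $Y_\sigma(E)\subseteq\bR(X(E)^\perp)$ is obtained from Lemma~\ref{L:rozsirovanizY} exactly as in the paper, and the reverse inclusion (which the paper dismisses as obvious) is precisely the computation you spell out, namely that $\nabla\varphi\in\C_0(\er^d,E^*)\cap X(E)$ for every $\varphi\in\Dc(\er^d)$, that $\bR(\bnu)$ agrees with $\bnu$ on $\C_0(\er^d,E^*)$ by the very definition of $\bR$, and hence $\ip{\bR(\bnu)}{\nabla\varphi}=\ip{\bnu}{\nabla\varphi}=0$ for $\bnu\in X(E)^\perp$.
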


\begin{proof}
The inclusion $\subset$ is obvious, the converse one follows from Lemma~\ref{L:rozsirovanizY}.
\end{proof}

To prove the key Proposition~\ref{P:spojitost} we will use the following abstract lemma.

\begin{lemma}\label{L:rozsirovaniabstraktni}
Let $Z$ be a separable Banach space and let $V\subset Z^*$ be a $C$-norming subspace (where $C\ge1$), i.e., $B_{Z^*}\subset \wscl{V\cap C B_{Z^*}}$.
Let $\theta$ be a bounded linear functional on $V$. The following assertions are equivalent.
\begin{itemize}
\item[(i)] There is $z\in Z$ such that $\theta(z^*)=z^*(z)$ for $z^*\in V$.
\item[(ii)] $\theta$ is weak$^*$-sequentially continuous on $V$.
\end{itemize}
Moreover, if these conditions are satisfied, then $z$ is uniquely determined and $\norm{z}\le C\norm{\theta}$.
\end{lemma}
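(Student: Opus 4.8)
The plan is to prove the two implications separately and then verify the norm bound and uniqueness. The implication (i)$\Rightarrow$(ii) is the easy direction: if $\theta(z^*)=z^*(z)$ for a fixed $z\in Z$, then $\theta$ is just the restriction to $V$ of the weak$^*$-continuous functional $\widehat{z}$ on $Z^*$, and weak$^*$-continuous functionals are in particular weak$^*$-sequentially continuous, so the restriction to $V$ inherits this property. This requires no separability or norming hypothesis.

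The substance is in (ii)$\Rightarrow$(i). First I would use separability of $Z$ to fix a countable dense set $\{z_n\}$ in $B_Z$, which allows me to metrize the weak$^*$ topology on the bounded set $CB_{Z^*}$ by a metric such as $\rho(z^*,w^*)=\sum_n 2^{-n}\abs{(z^*-w^*)(z_n)}$. The idea is to produce a candidate $z\in Z^{**}$ first and then show it actually lies in $Z$. Consider the functional $\theta$ on $V$; since $V$ is $C$-norming, the closed unit ball $B_{Z^*}$ is contained in the weak$^*$-closure of $V\cap CB_{Z^*}$. I want to extend $\theta$ to a weak$^*$-continuous functional on all of $Z^*$, which is exactly an element of $Z$. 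The mechanism is to show that $\theta$, restricted to the metrizable weak$^*$-compact set $CB_{Z^*}\cap V$, extends continuously to $\overline{CB_{Z^*}\cap V}^{w^*}\supseteq B_{Z^*}$; weak$^*$-sequential continuity of $\theta$ is precisely what guarantees this extension is well defined and continuous, because on a metrizable compact set sequential continuity upgrades to continuity and respects limits of convergent nets taken along sequences.

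Concretely, I would argue as follows. Given $z^*\in B_{Z^*}$, choose by the norming hypothesis a sequence $(v_k)$ in $V\cap CB_{Z^*}$ with $v_k\to z^*$ weak$^*$; such a sequence exists because the weak$^*$ closure on the metrizable ball is the sequential closure. Define $\Theta(z^*)=\lim_k\theta(v_k)$. The key verification is that this limit exists and is independent of the approximating sequence: if $(v_k)$ and $(w_k)$ are two such sequences, interlacing them gives a single weak$^*$-convergent sequence, and weak$^*$-sequential continuity of $\theta$ forces the interlaced image sequence to converge, whence the two limits agree. This is the step I expect to be the main obstacle, since one must handle the boundedness and linearity of the extension carefully: linearity of $\Theta$ follows by choosing approximating sequences additively, and the bound $\abs{\Theta(z^*)}\le\norm{\theta}\cdot C\norm{z^*}$ follows from $\abs{\theta(v_k)}\le\norm{\theta}\norm{v_k}\le C\norm{\theta}\norm{z^*}$ in the limit. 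Thus $\Theta$ is a bounded linear functional on $Z^*$ with $\norm{\Theta}\le C\norm{\theta}$.

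It remains to see that $\Theta\in Z$, i.e.\ that $\Theta$ is weak$^*$-continuous on $Z^*$, not merely norm-bounded. For this I would invoke the classical criterion that a linear functional on a dual space is weak$^*$-continuous if and only if its restriction to the unit ball is weak$^*$-continuous, and on the metrizable ball $CB_{Z^*}$ this reduces to weak$^*$-sequential continuity, which $\Theta$ has by construction. Hence $\Theta=\widehat{z}$ for some $z\in Z$, giving $\theta(z^*)=z^*(z)$ for $z^*\in V$ and $\norm{z}=\norm{\Theta}\le C\norm{\theta}$. Uniqueness of $z$ is immediate: if $z,z'\in Z$ both represent $\theta$ on $V$, then $z^*(z-z')=0$ for all $z^*\in V$, and since $V$ is norming (in particular total, separating points of $Z$) we conclude $z=z'$.
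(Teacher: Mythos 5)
There is a genuine gap at the crucial step of your argument: the existence and well-definedness of the limit defining $\Theta$. You set $\Theta(z^*)=\lim_k\theta(v_k)$ for a sequence $(v_k)$ in $V\cap CB_{Z^*}$ with $v_k\to z^*$ weak$^*$, and you justify the existence of this limit (and its independence of the sequence, via interlacing) by appealing to weak$^*$-sequential continuity of $\theta$. But that hypothesis only says: if $v_k\to v$ weak$^*$ with all $v_k$ \emph{and the limit $v$} in $V$, then $\theta(v_k)\to\theta(v)$. Your interlaced sequence converges weak$^*$ to $z^*$, which in general lies \emph{outside} $V$, so the hypothesis simply does not apply to it; it yields no information about $(\theta(v_k))$ beyond boundedness. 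Sequential continuity on a dense subset alone cannot produce a continuous extension, as nonlinear examples show: $q\mapsto\sin\bigl(1/(q-\sqrt2)\bigr)$ is continuous on $\mathbb{Q}$, yet has no limit along sequences $q_k\to\sqrt2$. The missing ingredient is the linearity of $\theta$ and of $V$, which your proof never invokes at this point.

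The repair --- and this is exactly how the paper's proof proceeds --- is to combine linearity with the translation invariance of the metric $\rho$ that metrizes the weak$^*$ topology on bounded sets. Since $0\in V$, sequential continuity gives continuity of $\theta$ at $0$ on $V\cap 2rB_{Z^*}$ (this set is weak$^*$-metrizable); translation invariance and linearity then upgrade this to \emph{uniform} continuity of $\theta$ on $V\cap rB_{Z^*}$, and uniformly continuous maps send $\rho$-Cauchy sequences to Cauchy sequences, which is precisely what makes your limit exist and be independent of the approximating sequence. (Equivalently and more directly: for your sequence, $v_k-v_j\to 0$ weak$^*$ as $k,j\to\infty$, with all terms and the limit $0$ lying in $V$, so $\theta(v_k)-\theta(v_j)=\theta(v_k-v_j)\to0$, i.e.\ $(\theta(v_k))$ is Cauchy.) The same point resurfaces where you claim that $\Theta$ is weak$^*$-sequentially continuous on $CB_{Z^*}$ ``by construction'': with only a pointwise definition this needs a diagonal argument, whereas with uniform continuity the extension to $\wscl{V\cap CB_{Z^*}}$ is automatically continuous. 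The remainder of your argument --- the easy implication, the Banach--Dieudonn\'e step, the norm estimate, and uniqueness from the fact that a norming subspace separates points of $Z$ --- is correct and coincides with the paper's.
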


\begin{proof}
The implication (i)$\Rightarrow$(ii) is obvious, let us prove the converse one.
Since $Z$ is separable, there is a translation invariant metric $\rho$ on $Z^*$ which 
generates the weak$^*$ topology on any bounded set. Hence, if $\theta$ is weak$^*$-sequentially continuous on $V$, it is weak$^*$-continuous on $V\cap r B_{Z^*}$
for each $r>0$. We claim that it is even uniformly continuous (in the metric $\rho$) on $V\cap r B_{Z^*}$. 

Indeed, fix $r>0$ and $\varepsilon>0$. By the weak$^*$-continuity on $V\cap 2r B_{Z^*}$ there is $\delta>0$ such that $\abs{\theta(z^*)}<\varepsilon$ whenever $z^*\in V\cap 2r B_{Z^*}$ and $\rho(z^*,0)<\delta$. Then, whenever $z_1^*,z_2^*\in V\cap r B_{Z^*}$ are such that $\rho(z_1^*,z_2^*)<\delta$, we have $z_1^*-z_2^*\in V\cap 2r B_{Z^*}$ and $\rho(z_1^*-z_2^*,0)<\delta$, hence $\abs{\theta(z_1^*)-\theta(z_2^*)}<\varepsilon$.

It follows that $\theta$ can be uniquely extended to a functional $\tilde{\theta}$ on $Z^*$ such that $\tilde{\theta}$ is weak$^*$-continuous on $\wscl{V\cap r B_{Z^*}}$ for any $r>0$. It is clear that $\tilde{\theta}$ is linear. It remains to show that it is represented by an element of $Z$ and to estimate its norm.

To this end we use the assumption that $V$ is $C$-norming. Since $$B_{Z^*}\subset\wscl{V\cap C B_{Z^*}},$$ we deduce that $\tilde{\theta}$ is weak$^*$ continuous on $B_{Z^*}$, thus it belongs to $Z$ by the Banach-Dieudonn\'e theorem. Moreover,
$$\norm{\tilde{\theta}}=\sup\left\{\abs{\tilde{\theta}(z^*)}\setsep z^*\in B_{Z^*}\right\}
\le \sup\{\abs{{\theta}(v^*)}\setsep v^*\in V\cap C B_{Z^*}\} = C\norm{\theta}.$$
This completes the proof.
\end{proof}

\begin{prop}\label{P:spojitost}\
\begin{itemize}
\item[(a)] A measure $\bmu\in\Ms(\er^d,E)$ belongs to $\Ms^{div}(E)$ if and only if the following condition holds:

\noindent Given a sequence $(\varphi_n)$ in $\Dc(\er^d)$ satisfying
\begin{itemize}
\item[$\circ$] $\varphi_n-\varphi_n(\o)$ pointwise converge to zero,
\item[$\circ$] $\sup_{n\in\en}\norm{\nabla\varphi_n}_{L^\infty(\er^d,E^*)}<\infty$;
\end{itemize}
one has $\ip{\bmu}{\nabla\varphi_n}\to0$.
\item[(b)] For any $\bmu\in\Ms^{div}(E)$ and any $\varepsilon>0$ there is $\f\in L^1(\er^d,E)$ with $\div\bmu=\div\f$ and $\norm{\f}\le(1+\varepsilon)\norm{\bmu}$.
\end{itemize}
\end{prop}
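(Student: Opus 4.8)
The plan is to deduce both assertions simultaneously from the abstract representation Lemma~\ref{L:rozsirovaniabstraktni}, applied to the separable Banach space $Z=\F(E)$ (separable because $E$ is), to its dual $Z^*=X(E)$ via the isometry $T$ of Proposition~\ref{P:popisCKK}, and to the subspace
$$W=\lin\{\nabla\varphi\setsep\varphi\in\Dc(\er^d)\}\subset X(E).$$
Under the identifications $\F(E)\cong L^1(\er^d,E)/Y(E)$ and $X(E)\cong\Lip_0(E)=\F(E)^*$, a functional $\nabla\varphi\in X(E)$ pairs with a class $\f+Y(E)\in\F(E)$ by $\ip{\nabla\varphi}{\f+Y(E)}=\int_{\er^d}\ip{\nabla\varphi(\x)}{\f(\x)}\di\x$. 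Given $\bmu\in\Ms(\er^d,E)$ I would define $\theta_{\bmu}$ on $W$ by $\theta_{\bmu}(\nabla\varphi)=\ip{\bmu}{\nabla\varphi}$; since $\bmu\in\C_0(\er^d,E^*)^*$ and $\norm{\nabla\varphi}_{X(E)}=\norm{\nabla\varphi}_{L^\infty}$, this is a well-defined bounded functional with $\norm{\theta_{\bmu}}\le\norm{\bmu}$.

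Next I would verify that $W$ is a $1$-norming subspace of $X(E)$, so that the lemma applies with $C=1$. This is exactly the content of Proposition~\ref{P:aproximace}(i): for $F\in\Lip_0(E)$ with $\norm{F}_{lip}\le1$ the functions $F_n\in\Dc(\er^d)$ produced there satisfy $\norm{\nabla F_n}_{L^\infty}\le\norm{F}_{lip}\le1$ and $F_n-F_n(\o)\to F$ weak$^*$, so $\nabla F_n\in W\cap B_{X(E)}$ and $\nabla F_n\to F'$ weak$^*$; hence $B_{X(E)}\subset\wscl{W\cap B_{X(E)}}$.

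With the hypotheses in place, the lemma yields the dichotomy: $\theta_{\bmu}$ is represented by some $z\in\F(E)$ if and only if $\theta_{\bmu}$ is weak$^*$-sequentially continuous on $W$, in which case $z$ is unique and $\norm{z}\le\norm{\theta_{\bmu}}\le\norm{\bmu}$. I would then translate each side. For representability, writing $z=\f+Y(E)$ with $\f\in L^1(\er^d,E)$, the identity $\theta_{\bmu}(\nabla\varphi)=\ip{\nabla\varphi}{z}$ for all $\varphi\in\Dc(\er^d)$ says $\ip{\bmu}{\nabla\varphi}=\int_{\er^d}\ip{\nabla\varphi(\x)}{\f(\x)}\di\x$, which is precisely $\div\f=\div\bmu$ in $\Dcp(\er^d)$; thus $\theta_{\bmu}$ is representable exactly when $\bmu\in\Ms^{div}(E)$. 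For the continuity side, since the weak$^*$ topology of $X(E)=\Lip_0(E)$ coincides on bounded sets with pointwise convergence of potentials, a sequence $(\nabla\varphi_n)$ in $W$ is bounded and weak$^*$-null precisely when $\sup_n\norm{\nabla\varphi_n}_{L^\infty}<\infty$ and $\varphi_n-\varphi_n(\o)\to0$ pointwise; as weak$^*$-convergent sequences are automatically bounded, weak$^*$-sequential continuity of the linear functional $\theta_{\bmu}$ is exactly the implication displayed in (a). Combining these two translations with the lemma gives (a).

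Assertion (b) then follows from the norm estimate: if $\bmu\in\Ms^{div}(E)$ and $z=\f_0+Y(E)$ is the representing element, then $\dist(\f_0,Y(E))=\norm{z}\le\norm{\bmu}$, so (the case $\bmu=0$ being trivial) one may pick $\f\in\f_0+Y(E)$ with $\norm{\f}_1\le(1+\varepsilon)\norm{\bmu}$, and modifying by $Y(E)$ leaves the divergence unchanged, so $\div\f=\div\bmu$. I expect the only genuinely delicate point to be the correct choice of the ambient data---realizing that the right separable space is $Z=\F(E)$ and the right norming subspace is the space $W$ of gradients of test functions---together with the verification that $W$ is norming (where Proposition~\ref{P:aproximace}(i) is indispensable) and the matching of abstract weak$^*$-sequential continuity with the concrete hypothesis on $(\varphi_n)$; once these identifications are made, the rest is a direct unwinding of definitions.
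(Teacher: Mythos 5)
Your proposal is correct and follows essentially the same route as the paper: the paper also applies Lemma~\ref{L:rozsirovaniabstraktni} with $Z=\F(E)$ and the $1$-norming subspace $V=\{\varphi-\varphi(\o)\setsep\varphi\in\Dc(\er^d)\}$ (which is exactly your $W$ transported through the isometry $T$ of Proposition~\ref{P:popisCKK}), uses Proposition~\ref{P:aproximace}(i) for the norming property, translates representability into $\div\f=\div\bmu$ and weak$^*$-sequential continuity into the two displayed conditions, and obtains (b) from the norm estimate $\norm{[\g]}\le\norm{\bmu}$ together with the definition of the quotient norm. The only difference is the cosmetic choice of working in $X(E)$ rather than in $\Lip_0(E)$.
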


\begin{proof} This is a consequence of Lemma~\ref{L:rozsirovaniabstraktni} and Proposition~\ref{P:popisCKK}. Let us explain it:

We will apply Lemma~\ref{L:rozsirovaniabstraktni} to $Z=\F(E)$ (hence $Z^*=\Lip_0(E)$) and $$V=\{\varphi-\varphi(\o)\setsep \varphi\in\Dc(\er^d)\}.$$ By Proposition~\ref{P:aproximace}(i) $V$ is $1$-norming.

Any $\bmu\in\Ms(\er^d,E)$ generates a functional on $V$ by $F_{\bmu}:\varphi\mapsto \ip{\bmu}{\nabla\varphi}$. The two conditions in (a) then mean that $F_{\bmu}$ is weak$^*$-sequentially continuous on $V$. So, by Lemma~\ref{L:rozsirovaniabstraktni} the validity of the two conditions is equivalent to the existence of $\theta\in \F(E)$ such that $F_{\bmu}=\theta|_V$. By Proposition~\ref{P:popisCKK} $\F(E)$ is canonically isometric to $L^1(\er^d,E)/Y(E)$. If $\theta\in \F(E)$ and $[\g]=(T^*)^{-1}(\theta)\in L^1(\er^d,E)/Y(E)$, then
$$\ip{\theta}{\varphi-\varphi(\o)}=\ip{T^*[\g]}{\varphi-\varphi(\o)}=\ip{[\g]}{T(\varphi-\varphi(\o)}=\ip{\g}{\nabla\varphi}, \quad \varphi\in \Dc(\er^d).$$
Therefore, the existence of $\theta\in\F(E)$ with $F_{\bmu}=\theta|_V$ is equivalent to the existence of $\g\in L^1(\er^d,E)$ with $F_{\g\lambda}=F_{\bmu}$. Finally, the equality $F_{\g\lambda}=F_{\bmu}$ means exactly that $\div\g=\div\bmu$ in $\Dcp(\er^d)$. This completes the proof of (a).

To show (b) suppose that $\bmu\in\Ms^{div}(E)$. Fix $\g\in L^1(\er^d,E)$ with $\div\g=\div\bmu$. Then $T^*([\g])|_V=F_{\bmu}$ (where $F_{\bmu}$ is defined as above). Therefore, using the isometric identification from Proposition~\ref{P:popisCKK} and Lemma~\ref{L:rozsirovaniabstraktni} we get
$$\norm{[\g]}=\norm{T^*([\g])}\le \norm{F_{\bmu}}\le \norm{\bmu}.$$
Hence, by the definition of the quotient norm, given $\varepsilon>0$, there
is $\f\in L^1(\er^d,E)$ with $\f-\g\in Y(E)$ such that $\norm{\f}\le(1+\varepsilon)\norm{\bmu}$. Then $\div\f=\div\g=\div\bmu$.
\end{proof}

\begin{lemma}\label{l:isomorfismusJ} The subspace $\Ms^{div}(E)$ is norm-closed in $\Ms(\er^d,E)$. Moreover,
the canonical identity mapping $J: L^1(\er^d,E)/Y(E)\to \Ms^{div}(E)/Y_\sigma(E)$ defined by
$$J(\f+Y(E))=\f\lambda+Y_\sigma(E),\quad [\f]=\f+Y(E)\in L^1(\er^d,E)/Y(E),$$
is an onto isometry.
\end{lemma}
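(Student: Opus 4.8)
The plan is to check directly that $J$ is a well-defined, linear, surjective isometry, and then to read off the norm-closedness of $\Ms^{div}(E)$ as a soft consequence. I begin with the formal points. For each fixed $\varphi\in\Dc(\er^d)$ the condition $\ip{\bmu}{\nabla\varphi}=0$ is norm-continuous in $\bmu$, so $Y_\sigma(E)$, being the intersection of the kernels of these functionals, is norm-closed in $\Ms(\er^d,E)$; hence $\Ms(\er^d,E)/Y_\sigma(E)$ is a Banach space and the quotient norm on $\Ms^{div}(E)/Y_\sigma(E)$ is genuine. The map $J$ is well defined: $\f\lambda\in\Ma(\er^d,E)\cap\Ms(\er^d,E)\subseteq\Ms^{div}(E)$, and if $\f-\f'\in Y(E)$ then $\div((\f-\f')\lambda)=0$, so $(\f-\f')\lambda\in Y_\sigma(E)$ and the value of $J$ does not depend on the representative. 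Surjectivity is immediate from the definition of $\Ms^{div}(E)$: given $\bmu\in\Ms^{div}(E)$, choose $\f\in L^1(\er^d,E)$ with $\div\f=\div\bmu$; then $\div(\f\lambda-\bmu)=0$, i.e. $\f\lambda-\bmu\in Y_\sigma(E)$, so $J([\f])=\bmu+Y_\sigma(E)$.

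The heart of the matter is that $J$ preserves the quotient norms, i.e. $\dist(\f,Y(E))=\dist(\f\lambda,Y_\sigma(E))$. The inequality $\ge$ is routine: for every $\g\in Y(E)$ we have $\g\lambda\in Y_\sigma(E)$ and, since $\f\mapsto\f\lambda$ is an isometry of $L^1(\er^d,E)$ onto $\Ma(\er^d,E)\cap\Ms(\er^d,E)$, $\norm{\f\lambda-\g\lambda}=\norm{\f-\g}_1$; taking the infimum over $\g$ gives $\dist(\f\lambda,Y_\sigma(E))\le\dist(\f,Y(E))$. For the reverse inequality I fix $\bnu\in Y_\sigma(E)$ and put $\bmu=\f\lambda-\bnu$, so that $\div\bmu=\div\f$ and $\bmu\in\Ms^{div}(E)$. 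Now Proposition~\ref{P:spojitost}(b) supplies, for each $\varepsilon>0$, some $\f_\varepsilon\in L^1(\er^d,E)$ with $\div\f_\varepsilon=\div\bmu=\div\f$ and $\norm{\f_\varepsilon}\le(1+\varepsilon)\norm{\bmu}$. Since $\f-\f_\varepsilon\in Y(E)$ we have $[\f]=[\f_\varepsilon]$, whence $\dist(\f,Y(E))=\norm{[\f_\varepsilon]}\le\norm{\f_\varepsilon}\le(1+\varepsilon)\norm{\f\lambda-\bnu}$. Letting $\varepsilon\to0$ and then taking the infimum over $\bnu\in Y_\sigma(E)$ yields $\dist(\f,Y(E))\le\dist(\f\lambda,Y_\sigma(E))$, so $J$ is an isometry.

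Finally, closedness comes for free. By Proposition~\ref{P:popisCKK} the space $L^1(\er^d,E)/Y(E)$ is isometric to $\F(E)$, hence complete; as $J$ is a surjective isometry, its range $\Ms^{div}(E)/Y_\sigma(E)$ is complete and therefore a closed subspace of $\Ms(\er^d,E)/Y_\sigma(E)$. Writing $q$ for the continuous quotient map $\Ms(\er^d,E)\to\Ms(\er^d,E)/Y_\sigma(E)$ and using $Y_\sigma(E)\subseteq\Ms^{div}(E)$ to see that $\Ms^{div}(E)=q^{-1}\bigl(\Ms^{div}(E)/Y_\sigma(E)\bigr)$, the continuity of $q$ shows $\Ms^{div}(E)$ is norm-closed in $\Ms(\er^d,E)$.

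The single substantive ingredient is the reverse norm inequality, which rests entirely on Proposition~\ref{P:spojitost}(b) (and through it on the approximation result Proposition~\ref{P:aproximace} and the abstract Lemma~\ref{L:rozsirovaniabstraktni}); all the remaining steps are formal manipulations with quotients, so I do not anticipate difficulty beyond correctly transferring the equality of divergences into the $L^1$-norm control furnished by that proposition.
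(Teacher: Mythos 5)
Your proof is correct, and for the isometry itself it follows essentially the paper's route: both arguments reduce the nontrivial inequality $\dist(\f,Y(E))\le\dist(\f\lambda,Y_\sigma(E))$ to Proposition~\ref{P:spojitost}(b), you by using the $(1+\varepsilon)$ bound in its statement and letting $\varepsilon\to0$, the paper by invoking the sharper estimate $\norm{[\f]}\le\norm{\bmu}$ established inside the proof of that proposition; well-definedness, surjectivity and $\norm{J}\le1$ are handled identically. Where you genuinely diverge is the norm-closedness of $\Ms^{div}(E)$. The paper proves it first and directly, by the absolutely-convergent-series test: given $\bmu_n\in\Ms^{div}(E)$ with $\sum_n\norm{\bmu_n}<\infty$, Proposition~\ref{P:spojitost}(b) supplies $\f_n\in L^1(\er^d,E)$ with $\div\f_n=\div\bmu_n$ and $\norm{\f_n}\le 2\norm{\bmu_n}$, and summing in $L^1$ shows the limit measure again lies in $\Ms^{div}(E)$. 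You instead prove the isometry first and obtain closedness as a soft corollary: the range of $J$ is complete, hence closed in $\Ms(\er^d,E)/Y_\sigma(E)$, and $\Ms^{div}(E)$ is its preimage under the quotient map. This is valid --- your preliminary check that $Y_\sigma(E)$ is closed removes any circularity, and the inclusion $Y_\sigma(E)\subset\Ms^{div}(E)$ makes the preimage identity correct --- and it buys a cleaner logical structure, exhibiting the closedness as a formal consequence of the isometry and the completeness of $L^1(\er^d,E)/Y(E)\cong\F(E)$. What the paper's direct route buys is independence from quotient-space bookkeeping: your argument tacitly uses that the norm which $\Ms^{div}(E)/Y_\sigma(E)$ inherits as a subspace of $\Ms(\er^d,E)/Y_\sigma(E)$ coincides with its own quotient norm (true precisely because $Y_\sigma(E)\subset\Ms^{div}(E)$, since both are the distance to $Y_\sigma(E)$ measured in $\Ms(\er^d,E)$), a point worth stating explicitly.
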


\begin{proof} To prove $\Ms^{\div}(E)$ is norm-closed it is enough to prove that $\sum_{n=1}^\infty\bmu_n\in \Ms^{\div}(E)$ whenever $(\bmu_n)$ is a sequence in $\Ms^{div}(E)$ such that $\sum_{n=1}^\infty\norm{\bmu_n}<\infty$. So, let $(\bmu_n)$ be such a sequence. Let $\bmu\in\Ms(\er^d,E)$ be its sum (by completeness of $\Ms(\er^d,E)$ it exists). By Proposition~\ref{P:spojitost}(b) there is a sequence $(\f_n)$ in $L^1(\er^d,E)$ such that $\div\bmu_n=\div\f_n$ and $\norm{\f_n}\le 2\norm{\bmu_n}$ for each $n\in\en$. Therefore $\sum_{n=1}^\infty\norm{\f_n}<\infty$, so by completeness of $L^1(\er^d,E)$ we have $\f=\sum_{n=1}^\infty \f_n\in L^1(\er^d,E)$. Moreover, clearly $\div\f=\div\bmu$.

Let us continue by proving the moreover part.
$J$ is well defined as $Y(E)\subset Y_\sigma(E)$ (more precisely, as $\f\lambda\in Y_\sigma(E)$ whenever $\f\in Y(E)$). The same inclusion shows that $\norm{J}\le1$. Further, $J$ is one-to-one, as for $f\in L^1(\er^d,E)$, $\f\lambda\in Y_\sigma(E)$ is equivalent to $\f\in Y(E)$. 

Let $[\bmu]=\bmu+Y_\sigma(E)\in \Ms^{\div}(E)/Y_\sigma(E)$. Since $\bmu\in\Ms^{\div}(E)$, there is $\f\in L^1(\er^d,E)$ with $\div\f=\div\bmu$. Then $\bmu-\f\lambda\in Y_\sigma(E)$, thus $[\bmu]=J([\f])$. It follows that $J$ is onto. Moreover, by Proposition~\ref{P:spojitost} we get $\norm{[\f]}\le \norm{\bmu}$. This proves that $\norm{J^{-1}}\le 1$.
\end{proof}

The next proposition is a stability result on $\Ms^{div}(E)$.

\begin{prop}\label{p:podminka} 
For any $\bmu\in\Ms^{\div}(E)$ and any $g\in L^1(\abs{\bmu})$, the measure $g\bmu$ defined by
$$g\bmu(B)=\left(\int_B g\di\mu_1,\dots,\int_B g\di\mu_d\right),\quad B\in\B(E)$$
belongs to $\Ms^{div}(E)$ as well.
\end{prop}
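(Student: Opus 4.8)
The plan is to apply the characterization of $\Ms^{\div}(E)$ from Proposition~\ref{P:spojitost}(a), after first reducing to the case of a smooth, compactly supported weight $g$. For the reduction, I note that $g\mapsto g\bmu$ is a linear contraction from $L^1(\abs{\bmu})$ into $\Ms(\er^d,E)$: indeed $g\bmu$ is $\sigma$-additive because each indefinite integral $B\mapsto\int_B g\di\mu_i$ is $\sigma$-additive, and $\norm{g\bmu}=\abs{g\bmu}(\er^d)\le\int\abs{g}\di\abs{\bmu}=\norm{g}_{L^1(\abs{\bmu})}$ since $\norm{g\bmu(B)}_E\le\int_B\abs{g}\di\abs{\bmu}$ for every Borel set $B$. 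By Lemma~\ref{l:isomorfismusJ} the subspace $\Ms^{\div}(E)$ is norm-closed, so $\{g\in L^1(\abs{\bmu})\setsep g\bmu\in\Ms^{\div}(E)\}$ is a closed linear subspace of $L^1(\abs{\bmu})$. As $\abs{\bmu}$ is a finite (hence regular) Borel measure on $\er^d$, the space $\Dc(\er^d)$ is dense in $L^1(\abs{\bmu})$; it therefore suffices to prove $g\bmu\in\Ms^{\div}(E)$ for every $g\in\Dc(\er^d)$.

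So fix $g\in\Dc(\er^d)$ and let $(\varphi_n)$ be any sequence in $\Dc(\er^d)$ with $\varphi_n-\varphi_n(\o)\to0$ pointwise and $M:=\sup_n\norm{\nabla\varphi_n}_{L^\infty(\er^d,E^*)}<\infty$; by Proposition~\ref{P:spojitost}(a) it is enough to show $\ip{g\bmu}{\nabla\varphi_n}\to0$. I would set $\psi_n=\varphi_n-\varphi_n(\o)$, so that $\psi_n(\o)=0$, $\nabla\psi_n=\nabla\varphi_n$, and $g\psi_n\in\Dc(\er^d)$. Using $\ip{g\bmu}{\nabla\varphi_n}=\ip{\bmu}{g\nabla\psi_n}$ together with the Leibniz rule $g\nabla\psi_n=\nabla(g\psi_n)-\psi_n\nabla g$, I split
$$\ip{g\bmu}{\nabla\varphi_n}=\ip{\bmu}{\nabla(g\psi_n)}-\ip{\bmu}{\psi_n\nabla g}$$
and treat the two terms separately.

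For the second term, the crucial observation is that, since $\psi_n(\o)=0$ and $\psi_n$ is $M$-Lipschitz in $\norm{\cdot}_E$ (because $\norm{\nabla\psi_n}_{L^\infty(\er^d,E^*)}\le M$), one has $\abs{\psi_n(\x)}\le M\norm{\x}_E$; hence $(\psi_n)$ is uniformly bounded on the compact set $\spt g$. As $\psi_n\to0$ pointwise and $\abs{\bmu}$ is finite, the dominated convergence theorem gives $\ip{\bmu}{\psi_n\nabla g}=\sum_{i=1}^d\int\psi_n\,\partial_i g\di\mu_i\to0$. For the first term I would apply Proposition~\ref{P:spojitost}(a) to $\bmu\in\Ms^{\div}(E)$ itself, with the test sequence $(g\psi_n)$: since $\psi_n(\o)=0$ we have $(g\psi_n)(\o)=0$, so $(g\psi_n)-(g\psi_n)(\o)=g\psi_n\to0$ pointwise, and $\nabla(g\psi_n)=g\nabla\psi_n+\psi_n\nabla g$ is uniformly bounded because $\norm{g\nabla\psi_n}_{L^\infty(\er^d,E^*)}\le\norm{g}_\infty M$ while $\psi_n\nabla g$ is bounded by the uniform bound on $\psi_n$ over $\spt g$. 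Thus $\ip{\bmu}{\nabla(g\psi_n)}\to0$, and combining the two limits yields $\ip{g\bmu}{\nabla\varphi_n}\to0$, as required.

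The main obstacle I anticipate is the bookkeeping of the additive constants $\varphi_n(\o)$: the naive substitution $g\nabla\varphi_n=\nabla(g\varphi_n)-\varphi_n\nabla g$ fails, because $\varphi_n$ itself (and hence $\varphi_n\nabla g$ and $\nabla(g\varphi_n)$) need not be uniformly bounded when $\varphi_n(\o)\to\infty$. Passing to $\psi_n=\varphi_n-\varphi_n(\o)$, which has the same gradient but vanishes at $\o$, is precisely what restores the uniform $L^\infty$-bounds needed both to feed $(g\psi_n)$ into the characterization and to invoke dominated convergence. The only other point requiring care is the standard fact that $\Dc(\er^d)$ is dense in $L^1(\abs{\bmu})$, which rests on the regularity of the finite Borel measure $\abs{\bmu}$.
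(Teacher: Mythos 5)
Your proof is correct, and its heart coincides with the paper's: for a smooth, compactly supported weight both arguments write
$\ip{g\bmu}{\nabla\varphi_n}=\ip{\bmu}{\nabla(g\psi_n)}-\ip{\bmu}{\psi_n\nabla g}$ with $\psi_n=\varphi_n-\varphi_n(\o)$, dispose of the first term by applying Proposition~\ref{P:spojitost}(a) to $\bmu$ itself with the test sequence $(g\psi_n)$ (which vanishes at $\o$, tends to zero pointwise and has uniformly bounded gradients), and of the second by dominated convergence, using that the Lipschitz bound together with $\psi_n(\o)=0$ makes $(\psi_n)$ uniformly bounded on $\spt g$. Where you genuinely differ is the reduction to such $g$. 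The paper argues by contradiction: it fixes a single ``bad'' sequence $(\varphi_n)$ witnessing $g\bmu\notin\Ms^{div}(E)$, writes $\bmu=\f\abs{\bmu}$ via Radon--Nikod\'ym, proves $\norm{\f}_{L^\infty(\abs{\bmu},E)}\le1$, and uses the resulting estimate $\abs{\ip{g\bmu}{\nabla\varphi_n}-\ip{h\bmu}{\nabla\varphi_n}}\le\norm{g-h}_{L^1(\abs{\bmu})}$ to transfer the bad sequence to some $h\in\Dc(\er^d)$ close to $g$, for which the Leibniz computation then yields a contradiction. You instead run a direct closure argument: $g\mapsto g\bmu$ is a linear contraction of $L^1(\abs{\bmu})$ into $\Ms(\er^d,E)$, and since $\Ms^{div}(E)$ is norm-closed by Lemma~\ref{l:isomorfismusJ}, the set $\{g\in L^1(\abs{\bmu})\setsep g\bmu\in\Ms^{div}(E)\}$ is a closed subspace containing the dense subspace $\Dc(\er^d)$. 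This is legitimate --- Lemma~\ref{l:isomorfismusJ} precedes the proposition and does not depend on it --- and it is arguably tidier: it avoids the contradiction and replaces the $L^\infty$-bound on the density by the elementary estimate $\norm{g\bmu(B)}_E\le\int_B\abs{g}\di\abs{\bmu}$. What the paper's route buys is self-containedness, and the bound $\norm{\f}_{L^\infty(\abs{\bmu},E)}\le1$ it establishes along the way is quoted again later (right after \eqref{eq:projekce}). Both proofs ultimately rest on the density of $\Dc(\er^d)$ in $L^1(\abs{\bmu})$, which the paper verifies via Luzin's theorem and you cite as a standard consequence of regularity.
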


\begin{proof} Suppose that $\bmu\in \Ms^{div}(E)$, $g\in L^1(\abs{\bmu})$ and $g\bmu\notin\Ms^{div}(E)$. By Proposition~\ref{P:spojitost} there is a sequence $(\varphi_n)$ in $\Dc(\er^d)$ and $C>0$ such that
\begin{itemize}
\item $\varphi_n-\varphi_n(\o)$ pointwise converge to zero,
\item $\norm{\nabla\varphi_n}_{L^\infty(\er^d,E^*)}\le 1$ for $n\in\en$,
\item $\ip{g\bmu}{\nabla\varphi_n}>C$ for $n\in\en$.
\end{itemize}
By the Radon-Nikod\'ym theorem there is a function $\f\in L^1(\abs{\bmu},E)$ such that $\mu_i= f_i\abs{\bmu}$ for $i=1,\dots,d$. Moreover, we have $\norm{\f}_{L^\infty(\abs{\bmu},E)}\leq 1$. Indeed, in order to get a contradiction, let us assume that the set $A:=\{\x\setsep \|\f(\x)\| > 1\}$ is of positive $\abs{\bmu}$-measure. Fixing a countable dense set $\{\y_n\setsep n\in\en\}\subset S_{E^*}$, we see that $A = \bigcup_{n\in\en}\{\x\setsep \ip{\y_n}{\f(\x)} > 1\}$; hence, there is $n\in\en$ such that the set $A_n:=\{\x\setsep \ip{\y_n}{\f(\x)} > 1\}$ is of positive $\abs{\bmu}$-measure. Then we have
$$\abs{\bmu}(A_n) < \int_{A_n} \ip{\y_n}{\f(\x)}\di\abs{\bmu} = \ip{\y_n}{\int_{A_n} \f(\x)\di\abs{\bmu}} = \ip{\y_n}{\bmu(A_n)} \leq \norm{\bmu(A_n)}_E,$$
a contradiction.

Observe that $\Dc(\er^d)$ are norm dense in $L^1(\abs{\bmu})$. Indeed, given $h\in L^1(\abs{\bmu})$ and $\varepsilon>0$, the Luzin theorem yields $h_1$, a continuous function with compact support such that $\norm{h-h_1}_{L^1(\abs{\bmu})}<\varepsilon$. Further, there is $h_2\in\Dc(\er^d)$ with $\norm{h_1-h_2}_\infty<\varepsilon$. Then $\norm{h_1-h_2}_{L^1(\abs{\bmu})}<\varepsilon\norm{\bmu}$, so  $\norm{h-h_2}_{L^1(\abs{\bmu})}<\varepsilon(1+\norm{\bmu})$.

Let $h\in L^1(\abs{\bmu})$ be any function. Then
$$\begin{aligned}
\abs{\ip{g\bmu}{\nabla\varphi_n}-\ip{h\bmu}{\nabla\varphi_n}}
&=\abs{\sum_{i=1}^d \int (g(\x)-h(\x))\partial_i\varphi_n(\x) f_i(\x)\di\abs{\bmu}(\x)}
\\&\le \int \abs{g(\x)-h(\x)}\norm{\nabla\varphi_n(\x)}_{E^*}\norm{\f(\x)}_E\di\abs{\bmu}(\x)\\&
\le \norm{g-h}_{L^1(\abs{\bmu})}.\end{aligned}$$
Therefore, we can find $h\in\Dc(\er^d)$ (with  $\norm{g-h}_{L^1(\abs{\bmu})}$ small enough) such that  $\ip{h\bmu}{\nabla\varphi_n}>\frac C2$ for $n\in\en$. 
Then, for any $n\in\en$, we have
$$\begin{aligned}
\frac{C}{2}&<\ip{h\bmu}{\nabla\varphi_n}=\ip{h\bmu}{\nabla(\varphi_n-\varphi_n(\o))}
=\ip{\bmu}{h\nabla(\varphi_n-\varphi_n(\o))}
\\&=\ip{\bmu}{\nabla(h(\varphi_n-\varphi_n(\o))} - \ip{\bmu}{(\varphi_n-\varphi_n(\o))\nabla h}\end{aligned}$$
We will show that the last expression goes to zero as $n\to\infty$, which will be a contradiction. To this end first observe that
\begin{equation}\begin{aligned}
\norm{(\varphi_n-\varphi_n(\o))\nabla h}_{L^\infty(\er^d,E^*)}
&\le\norm{\nabla h}_{L^\infty(\er^d,E^*)}\cdot\sup_{\x\in\spt h}\norm{\varphi_n(\x)-\varphi_n(\o)}\\&\le\norm{\nabla h}_{L^\infty(\er^d,E^*)}
\cdot\sup_{\x\in\spt h}\norm{\x},\end{aligned}
\end{equation}
so the sequence $((\varphi_n-\varphi_n(\o))\nabla h)$ is uniformly bounded.
By Lebesgue theorem it follows that
$\ip{\bmu}{(\varphi_n-\varphi_n(\o))\nabla h}\to0.$

The remaining term $\ip{\bmu}{\nabla(h(\varphi_n-\varphi_n(\o))}$ converges to zero as well by Proposition~\ref{P:spojitost}. Indeed, $\bmu\in\Ms^{\div}(E)$ and $(h(\varphi_n-\varphi_n(\o)))$ is a sequence of test functions on $\er^d$ vanishing at $\o$, pointwise converging to zero and their gradients are uniformly bounded, as 
$$\begin{aligned}
\norm{\nabla(h(\varphi_n-\varphi_n(\o)))}_{L^\infty(\er^d,E^*)}
&\le \norm{h\cdot\nabla\varphi_n}_{L^\infty(\er^d,E^*)}+\norm{\nabla{h}\cdot(\varphi_n-\varphi_n(\o))}_{L^\infty(\er^d,E^*)}
\\& \le \norm{h}_\infty+\norm{\nabla h}_{L^\infty(\er^d,E^*)}\cdot\sup_{\x\in\spt h}\norm{\x}.\end{aligned}$$
This completes the proof.
\end{proof}

\section{Tangent spaces of a measure and a projection onto $\Ms^{div}(E)$}\label{s:tecny}

The aim of this section is to show that $\Ms^{div}(E)$ is a complemented subspace of $\Ms(\er^d,E)$. The projection is defined in \eqref{eq:projekce} and a proof that it is indeed
a projection with required properties is given in Proposition~\ref{P:projekcenaMsdiv}. This result will be used in the next section to prove our main result, Theorem~\ref{T:main}.

The basic tool will be the notion of tangent spaces to a given nonnegative measure in the sense of \cite{hiUm}. This approach was used to obtain similar results in a different context for example in \cite{boChaJi}.

Recall that $E=(\er^d,\norm{\cdot}_E)$ is a given $d$-dimensional normed space (where $d\ge2$). Further, let $C=d_{BM}(E,\ell^2_d)$ be the Banach-Mazur distance of $E$ and the $d$-dimensional Hilbert space. Then there is a Euclidean norm $\norm{\cdot}_2$ on $E$ (i.e., a norm induced by an inner product) such that $\norm{\cdot}_2\le\norm{\cdot}_E\le C\norm{\cdot}_2$. Let us fix such a norm and, given a linear subspace $F\subset E$, denote by $P_F$ the orthogonal projection of $E$ onto $F$ with respect to the norm $\norm{\cdot}_2$. Below we use $\dist_2$ to denote distance in the norm $\norm{\cdot}_2$ and  $U_2(\x,r)$ to denote open balls in this norm.

Further, given a nonnegative measure $\nu\in\Ms(\er^d)$, we set
\begin{equation*}
 \M_\nu(E)=\{\f\in L^1(\nu,E)\setsep \f\nu\in\Ms^{div}(E)\}.
\end{equation*}

The following proposition provides a characterization of $M_\nu$ analogous to \cite[Proposition 3.2(i)]{boChaJi}. It will be crucial to complete the proof, together with the results of \cite{hiUm}.

\begin{prop}\label{P:Tnu}Let $\nu\in\Ms(E)$ be a nonnegative measure. 
    Then there exists a mapping $T_\nu$ assigning to each $\x\in \er^d$ a vector subspace $T_\nu(\x)\subset E$ with the following properties:
    \begin{itemize}
    \item $T_\nu$ is lower $\nu$-measurable, i.e. $\{\x\in \er^d\setsep T_\nu(\x)\cap G\ne\emptyset\}$ is $\nu$-measurable for any $G\subset E$ open.
    \item For any $\f\in L^1(\nu,E)$ we have
    $$\f\in \M_\nu(E) \Leftrightarrow \f(\x)\in T_\nu(\x)\mbox{ for $\nu$-almost all }\x\in \er^d.$$
    \end{itemize}
  Moreover, there exists a sequence $(\f_n)_{n=1}^\infty$ of functions from $\M_\nu(E)$  such that 
$$T_\nu(\x) = \overline{\{\f_n(\x)\setsep n\in\en\}}\mbox{ for $\nu$-almost all } \x\in \er^d.$$
\end{prop}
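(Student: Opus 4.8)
The plan is to recognize $\M_\nu(E)$ as a closed $L^\infty(\nu)$-submodule of $L^1(\nu,E)$ and then to invoke the representation of such (``decomposable'') sets by measurable fields of subspaces from \cite{hiUm}.

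First I would record the structural properties of $\M_\nu(E)$. Since $\nu$ is a finite nonnegative measure, the map $\f\mapsto\f\nu$ is a linear isometry of $L^1(\nu,E)$ into $\Ms(\er^d,E)$ (its total variation equals $\int\norm{\f}_E\di\nu$), and $\M_\nu(E)$ is precisely the preimage of $\Ms^{div}(E)$ under it. As $\Ms^{div}(E)$ is a norm-closed linear subspace by Lemma~\ref{l:isomorfismusJ}, $\M_\nu(E)$ is a norm-closed linear subspace of $L^1(\nu,E)$ containing $0$. Next, for $\f\in\M_\nu(E)$ and $g\in L^\infty(\nu)$, writing $\bmu=\f\nu$ we have $\abs{\bmu}=\norm{\f}_E\,\nu$, so $g\in L^1(\abs{\bmu})$ and Proposition~\ref{p:podminka} gives $(g\f)\nu=g\bmu\in\Ms^{div}(E)$; thus $g\f\in\M_\nu(E)$, i.e. $\M_\nu(E)$ is a module over $L^\infty(\nu)$. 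In particular, taking $g=\chi_A$ shows $\M_\nu(E)$ is decomposable in the sense of \cite{hiUm}.

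Then I would fix a sequence $(\f_n)$ dense in $\M_\nu(E)$ (possible since $\nu$ is finite and $E$ is finite-dimensional, so $L^1(\nu,E)$ is separable) and define $T_\nu(\x)=\overline{\{\f_n(\x)\setsep n\in\en\}}$. Lower $\nu$-measurability is then immediate, because for open $G\subset E$ we have $\{\x\setsep T_\nu(\x)\cap G\ne\emptyset\}=\bigcup_n\{\x\setsep \f_n(\x)\in G\}$, a countable union of measurable sets. The inclusion $\f\in\M_\nu(E)\Rightarrow\f(\x)\in T_\nu(\x)$ a.e. is easy: approximating $\f$ in $L^1(\nu,E)$ by a subsequence of $(\f_n)$ that also converges $\nu$-a.e., the values $\f(\x)$ are $\nu$-a.e.\ limits of points of $\{\f_n(\x)\}$, hence lie in $T_\nu(\x)$. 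The converse inclusion --- that every $\f\in L^1(\nu,E)$ with $\f(\x)\in T_\nu(\x)$ a.e. belongs to $\M_\nu(E)$ --- is exactly the nontrivial part of the Hiai--Umegaki correspondence \cite{hiUm} between closed decomposable subsets of $L^1(\nu,E)$ and measurable closed-valued multifunctions (through their sets of $L^1$-selections); this is the step I expect to be the main obstacle, as it rests on using decomposability to glue finitely many of the $\f_n$ into functions approximating a prescribed selection $\f$ in $L^1$, together with the closedness of $\M_\nu(E)$.

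Finally I would verify that $T_\nu(\x)$ is a vector subspace for $\nu$-a.e.\ $\x$, using only the easy inclusion and the linear structure of $\M_\nu(E)$. Since $\M_\nu(E)$ is a linear subspace, $\f_n+\f_m\in\M_\nu(E)$ and $q\f_n\in\M_\nu(E)$ for all $n,m$ and all rational $q$; hence off a single $\nu$-null set we have $\f_n(\x)+\f_m(\x)\in T_\nu(\x)$ and $q\f_n(\x)\in T_\nu(\x)$ for all such indices. As $T_\nu(\x)$ is closed and $\{\f_n(\x)\}$ is dense in it, approximating arbitrary $u,v\in T_\nu(\x)$ by $\f_{n_k}(\x),\f_{m_k}(\x)$ and arbitrary real scalars by rationals shows that $T_\nu(\x)$ is closed under addition and scalar multiplication; together with $0\in T_\nu(\x)$ (as $0\in\M_\nu(E)$) this makes $T_\nu(\x)$ a vector subspace, and the Castaing representation $T_\nu(\x)=\overline{\{\f_n(\x)\setsep n\in\en\}}$ is the one required in the statement.
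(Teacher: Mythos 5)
Your proposal is correct and follows essentially the same route as the paper: closedness of $\M_\nu(E)$ via Lemma~\ref{l:isomorfismusJ}, decomposability via Proposition~\ref{p:podminka}, the Hiai--Umegaki correspondence \cite{hiUm} for the nontrivial direction of the selection characterization, and $\qe$-linear combinations plus closedness to get the a.e.\ subspace property (the paper cites \cite[Theorem 3.1 and Lemma 1.1]{hiUm} wholesale, whereas you unpack the Castaing representation from a dense sequence yourself, which is exactly how that theorem is proved). The only point to add is the paper's final remark: redefine $T_\nu(\x)$ (say as $\{0\}$) on the exceptional $\nu$-null set, so that $T_\nu(\x)$ is a vector subspace for \emph{every} $\x\in\er^d$ as the statement requires.
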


\begin{proof} By Lemma~\ref{l:isomorfismusJ} we know that $\M_\nu(E)$ is a closed subspace of $L^1(\nu,E)$. Further, it follows from Proposition~\ref{p:podminka} that $\bchi_B\cdot\f\in \M_\nu(E)$ whenever $\f\in \M_\nu(E)$ and $B\subset E$ is $\nu$-measurable. It follows 
from \cite[Theorem 3.1]{hiUm} that there is a mapping $T_\nu$ assigning to each $\x\in \er^d$ a nonempty closed subset $T_\nu(\x)\subset E$ such that the two conditions are satisfied. Moreover, by 
\cite[Lemma 1.1]{hiUm} there is a sequence $(\f_n)$ in $\M_\nu(E)$ such that $T_\nu(\x)$ is given by the above formula. Since $\M_\nu(E)$ is a linear subspace, the sequence $(\f_n)$ can be extended to one generating a  $\qe$-linear subspace, so we can deduce that $T_\nu(\x)$ is a linear subspace for $\nu$-almost all $\x\in \er^d$. Since the change of $T_\nu$ on a set of $\nu$-measure zero does not affect the statement, we can suppose that $T_\nu(\x)$ is a linear subspaces for each $\x\in E$.
\end{proof}

In the next proposition we construct a projection of $L^1(\nu,E)$ onto $\M_\nu(E)$. A similar idea was used in \cite[Proposition 3.2(ii)]{boChaJi}

\begin{prop}\label{P:projekcenaL1nu}
Let $\nu\in\Ms(E)$ be a nonnegative measure and $T_\nu$ be the mapping provided by Proposition~\ref{P:Tnu}. For any $\f\in L^1(\nu,E)$ define
$$P_\nu(\f)(\x)=P_{T_\nu(\x)}(\f(\x)),\quad \x\in E.$$
Then $P_\nu$ is a linear projection of $L^1(\nu,E)$ onto $\M_\nu(E)$ with $\norm{P_\nu}\le C$.
\end{prop}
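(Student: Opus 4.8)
The plan is to verify that $P_\nu$ is a well-defined bounded operator $L^1(\nu,E)\to L^1(\nu,E)$, and then to observe that the remaining assertions --- linearity, the projection property, and the description of the range --- drop out immediately from the pointwise definition together with the characterization of $\M_\nu(E)$ in Proposition~\ref{P:Tnu}. The only genuinely non-trivial point is the $\nu$-measurability of $\x\mapsto P_{T_\nu(\x)}(\f(\x))$, so I would dispose of that first.

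For measurability I would use the generating sequence $(\f_n)$ provided by Proposition~\ref{P:Tnu}, for which $T_\nu(\x)=\lin\{\f_n(\x)\setsep n\in\en\}$ for $\nu$-almost every $\x$ (the closure appearing in Proposition~\ref{P:Tnu} coincides with the linear span since $E$ is finite dimensional). For each fixed $N$ the map $\x\mapsto P_{\lin\{\f_1(\x),\dots,\f_N(\x)\}}$ is $\nu$-measurable: running a measurable Gram--Schmidt orthonormalization on $\f_1(\x),\dots,\f_N(\x)$ --- that is, decomposing $\er^d$ into the measurable pieces determined by which indices contribute a nonzero orthogonal increment, and writing the resulting orthonormal frame by explicit formulas (linear combinations and normalizations) in the $\f_i(\x)$ --- exhibits this projection as a measurable field of sums of rank-one operators. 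Since the subspaces $\lin\{\f_1(\x),\dots,\f_N(\x)\}$ increase with $N$ and their union is $T_\nu(\x)$, finite-dimensionality of $E$ forces them to stabilize at $T_\nu(\x)$ for $N$ large (depending on $\x$); hence $P_{T_\nu(\x)}=\lim_N P_{\lin\{\f_1(\x),\dots,\f_N(\x)\}}$ pointwise, so $\x\mapsto P_{T_\nu(\x)}$ is $\nu$-measurable and therefore so is $P_\nu(\f)$.

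For boundedness and integrability I would exploit the comparison $\norm{\cdot}_2\le\norm{\cdot}_E\le C\norm{\cdot}_2$ together with the fact that $P_{T_\nu(\x)}$ is an orthogonal projection, hence a contraction, in $\norm{\cdot}_2$. For every $\x$,
$$\norm{P_{T_\nu(\x)}(\f(\x))}_E\le C\norm{P_{T_\nu(\x)}(\f(\x))}_2\le C\norm{\f(\x)}_2\le C\norm{\f(\x)}_E,$$
so integrating against $\nu$ gives $\norm{P_\nu(\f)}_{L^1(\nu,E)}\le C\norm{\f}_{L^1(\nu,E)}$. This simultaneously shows that $P_\nu(\f)\in L^1(\nu,E)$ and that $\norm{P_\nu}\le C$. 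Linearity is immediate, as $P_{T_\nu(\x)}$ is linear for each fixed $\x$.

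Finally, for the projection property and the range: by construction $P_\nu(\f)(\x)=P_{T_\nu(\x)}(\f(\x))\in T_\nu(\x)$ for every $\x$, so Proposition~\ref{P:Tnu} gives $P_\nu(\f)\in\M_\nu(E)$, i.e.\ the range of $P_\nu$ is contained in $\M_\nu(E)$. Conversely, if $\f\in\M_\nu(E)$ then $\f(\x)\in T_\nu(\x)$ for $\nu$-almost every $\x$ by Proposition~\ref{P:Tnu}, whence $P_{T_\nu(\x)}(\f(\x))=\f(\x)$ $\nu$-a.e.\ and $P_\nu(\f)=\f$ in $L^1(\nu,E)$. Thus $P_\nu$ acts as the identity on $\M_\nu(E)$, which yields both $P_\nu^2=P_\nu$ and $P_\nu(L^1(\nu,E))=\M_\nu(E)$. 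I expect the measurable orthonormalization in the first step to be the only real obstacle; the rest is bookkeeping.
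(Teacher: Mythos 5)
Your proof is correct, but the measurability step --- the only nontrivial point, as you rightly say --- is handled by a genuinely different argument than in the paper. The paper does not use the generating sequence $(\f_n)$ at all in this proof; it works directly with the lower $\nu$-measurability of $T_\nu$ (the first assertion of Proposition~\ref{P:Tnu}). Concretely, it first shows that $\x\mapsto\dist_2(\f(\x),T_\nu(\x))$ is $\nu$-measurable using a countable dense set of centers of small balls, and then characterizes the event $P_{T_\nu(\x)}(\f(\x))\in U_2(\y,r)$ by a countable condition involving only this distance function, membership of $\f(\x)$ in rational balls, and nonemptiness of intersections of $T_\nu(\x)$ with explicit open sets; the geometric engine is the Pythagorean identity $\norm{\z-\z_0}_2^2=\norm{\f(\x)-\z_0}_2^2-\norm{\f(\x)-\z}_2^2$ for $\z=P_{T_\nu(\x)}(\f(\x))$ and $\z_0\in T_\nu(\x)$. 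You instead invoke the ``moreover'' part of Proposition~\ref{P:Tnu}, observe that $T_\nu(\x)=\lin\{\f_n(\x)\setsep n\in\en\}$ for $\nu$-almost every $\x$ (correct: $T_\nu(\x)$ is a closed subspace containing all $\f_n(\x)$, and finite-dimensional spans are closed), and then run a measurable Gram--Schmidt plus the stabilization of the increasing spans, which is legitimate in finite dimensions. Both routes are valid and rest on different halves of Proposition~\ref{P:Tnu}: yours is more constructive and exploits finite-dimensionality through stabilization, while the paper's is intrinsic to the multifunction $T_\nu$ and avoids the case analysis hidden in a measurable orthonormalization. The remaining steps --- the norm estimate via contractivity of orthogonal projections in $\norm{\cdot}_2$ combined with $\norm{\cdot}_2\le\norm{\cdot}_E\le C\norm{\cdot}_2$, linearity, and the identification of the range and the projection property from the characterization of $\M_\nu(E)$ --- coincide with the paper's. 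One small point of hygiene: your Gram--Schmidt construction produces a function equal to $P_\nu(\f)$ only on the full-measure set where the span representation of $T_\nu$ holds, so you should note explicitly that altering it on the complementary $\nu$-null set is harmless, since all conclusions are statements in $L^1(\nu,E)$.
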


\begin{proof} Let us start by proving that $P_\nu(\f)$ is a $\nu$-measurable function for each $\f\in L^1(\nu,E)$. To this end recall that the orthogonal projection coincides with the metric projection, i.e., the nearest-point mapping. Fix an open set $G\subset E$. Let $D$ be a countable dense subset of $E$. 

The first step is to show that the mapping
$$d(\x)=\dist_2(\f(\x),T_\nu(\x)),\quad \x\in E$$
is $\nu$-measurable. This follows immediately from the following equivalence which holds for each $r>0$.
\begin{equation}
d(\x)<r \Leftrightarrow \exists \y\in D: \f(\x)\in U_2\left(\y,\frac r2\right)\ \&\ U_2\left(\y,\frac r2\right)\cap T_\nu(\x)\ne\emptyset.
\end{equation}
The implication $\Leftarrow$ follows from the triangle inequality. Let us show the converse one.
Suppose $d(\x)<r$. Then there is $\z\in T_\nu(\x)$ with $\norm{\f(\x)-\z}_2<r$. Set $\y_0=\frac12(\f(\x)+\z)$. Then  $\norm{\f(\x)-\y_0}_2=\norm{\y_0-\z}_2<\frac r2$. Thus we can find $\y\in D$ close enough to $\y_0$ such that  $\norm{\f(\x)-\y}_2<\frac r2$ and $\norm{\y-\z}_2<\frac r2$.
This $\y$ witnesses the validity of the right-hand side.

Further, given any $\y\in E$ and $r>0$, we will show that
\begin{equation} \label{eq:meritelnost}
\begin{aligned}
P_\nu(\f)(\x)\in U_2(\y,r) \Leftrightarrow &\ \exists \alpha,\beta\in\qe\,\exists n\in\en\,\exists \u\in D: 
4\beta^2-\alpha^2<\frac1{n^2}\ \&\  \\ &\ d(\x)>\alpha\ \&\ \f(\x)\in U_2(\u,\beta)\ \&\ \\&\ T_\nu(\x)\cap U_2(\u,\beta)\cap U_2\left(\y,r-\frac1n\right)\ne\emptyset.
\end{aligned}
\end{equation}
Once this is proved, it is clear that $P_\nu(\f)^{-1}(U_2(\y,r))$ is a $\nu$-measurable set for any $\y\in E$ and $r>0$, hence the mapping  $P_\nu(\f)$ is $\nu$-measurable. So, let us prove the equivalence.

$\Rightarrow$: Let $\z=P_\nu(\f)(\x)\in U_2(\y,r)$. 
Fix $n\in\en$ such that $\norm{\z-\y}_2<r-\frac1n$. Further, fix $\alpha,\beta\in\qe$ such that
$\alpha<d(\x)<2\beta$ and $4\beta^2-\alpha^2<\frac1{n^2}$. Set $\u_0=\frac12(\f(\x)+\z)$. Then
 $\norm{\f(\x)-\u_0}_2=\norm{\u_0-\z}_2<\beta$. So, we can find $\u\in D$ close enough to $\u_0$ such that  $\norm{\f(\x)-\u}_2<\beta$ and $\norm{\u-\z}_2<\beta$. It is clear that $\alpha,\beta,n,\u$ witness the validity of the right-hand side.

$\Leftarrow$: Choose the respective $\alpha,\beta,n,\u$. Fix $\z_0\in  T_\nu(\x)\cap U_2(\u,\beta)\cap U_2(\y,r-\frac1n)$ and set $\z=P_\nu(\f(\x))$. Then
$$\begin{aligned}
\norm{\z-\z_0}_2^2&=\norm{\f(\x)-\z_0}_2^2-\norm{\f(\x)-\z}_2^2
\le (\norm{\f(x)-\u}_2+\norm{\z_0-\u}_2)^2-d(\x)^2\\ &\le 4\beta^2-\alpha^2<\frac1{n^2}.
\end{aligned}$$
It follows that 
$$\norm{\z-\y}_2\le\norm{\z-\z_0}_2+\norm{\z_0-\y}_2<r,$$
thus $\z\in U_2(\y,r)$.

\smallskip

So, we have proved the equivalence \eqref{eq:meritelnost} and hence we know that $P_\nu(\f)$ is $\nu$-measurable for each $\f\in L^1(\nu,E)$. We continue by estimating its norm.
Since $\norm{P_\nu(\f)(\x)}_2\le \norm{\f(\x)}_2$ for each $\x\in \er^d$, we get
$$\begin{aligned}
\norm{P_\nu(\f)}_{L^1(\nu,E)}&=\int\norm{P_\nu(\f)(\x)}_E\di\nu(\x)
\le C\int\norm{P_\nu(\f)(\x)}_2\di\nu(\x)
\\&\le C\int\norm{\f(\x)}_2\di \nu(\x) 
\le C\int\norm{\f(\x)}_E\di \nu(\x)=\norm{\f}_{L^1(\nu,E)},\end{aligned}$$
thus $P_\nu$ maps $L^1(\nu,E)$ into $L^1(\nu,E)$. Moreover, it is clear that
$P_\nu$ is linear and the above estimate shows that $\norm{P_\nu}\le C$.
Finally, it follows from Proposition~\ref{P:Tnu} that $P_\nu$ is a projection with range $\M_\nu(E)$.
\end{proof}

The previous proposition describes a projection of $L^1(\nu,E)$ onto $\M_\nu(E)$. Next we will glue these projections to get a projection of $\Ms(\er^d,E)$ onto $\Ms^{div}(E)$. The projection will be defined by
setting
\begin{equation}\label{eq:projekce}
P(\bmu)= P_\nu(\f)\nu\quad\mbox{if }\bmu=\f\nu\mbox{ where }\nu\in\Ms(E), \nu\ge0, \f\in L^1(\nu,E).
\end{equation}
Note that any $\bmu\in \Ms(\er^d,E)$ can be, due to the Radon-Nikod\'ym theorem, expressed as $\bmu=\f\abs{\bmu}$ where $\f\in L^1(\abs{\bmu},E)$ (in fact, $\f\in L^\infty(\abs{\bmu},E)$, see the proof of Proposition~\ref{p:podminka}).
We will show that $P$ is a well-defined linear projection of norm at most $C$. To this end we need the following lemma.

\begin{lemma}\label{L:rovnostae}
Let $\mu,\nu\in\Ms(E)$ be nonnegative measures such that $\mu$ is absolutely continuous with respect to $\nu$. Then $T_\nu(\x)=T_\mu(\x)$ for $\mu$-almost all $\x\in \er^d$.
\end{lemma}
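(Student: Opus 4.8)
The plan is to pass to the Radon--Nikod\'ym density $h$ of $\mu$ with respect to $\nu$ and compare the two tangent maps through it. Since $\mu,\nu$ are nonnegative, finite and $\mu\ll\nu$, there is $h\in L^1(\nu)$ with $h\ge0$ and $\mu=h\nu$. The elementary but decisive identity is $\g\mu=(\g h)\nu$ for $\g\in L^1(\mu,E)$ (note $\int\norm{\g}_E\di\mu=\int\norm{\g}_E h\di\nu$, so $\g h\in L^1(\nu,E)$). By the very definitions of $\M_\mu(E)$ and $\M_\nu(E)$, and because $\g\mu\in\Ms^{div}(E)\iff(\g h)\nu\in\Ms^{div}(E)$, this gives for each $\g\in L^1(\mu,E)$ the equivalence $\g\in\M_\mu(E)\iff\g h\in\M_\nu(E)$. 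Feeding this into the characterization of Proposition~\ref{P:Tnu} and using that $T_\nu(\x)$ is a linear subspace --- so that $\g(\x)h(\x)\in T_\nu(\x)$ is equivalent to $\g(\x)\in T_\nu(\x)$ on $\{h>0\}$, while $\{h=0\}$ is $\mu$-null --- I obtain that $\g\in\M_\mu(E)$ if and only if $\g(\x)\in T_\nu(\x)$ for $\mu$-almost all $\x$. Thus $T_\nu$ plays, relative to $\mu$, the role that characterizes $T_\mu$, and it remains to match the two via the generating sequences.

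For the inclusion $T_\mu(\x)\subset T_\nu(\x)$ $\mu$-a.e., I take the sequence $(\g_m)$ furnished by Proposition~\ref{P:Tnu} with $T_\mu(\x)=\overline{\{\g_m(\x)\setsep m\in\en\}}$ for $\mu$-a.e.\ $\x$. Each $\g_m\in\M_\mu(E)$, so by the equivalence above $\g_m(\x)\in T_\nu(\x)$ for $\mu$-a.e.\ $\x$; removing a single $\mu$-null set covering all $m$ and using that $T_\nu(\x)$ is closed, the whole closure $\overline{\{\g_m(\x)\}}$ lies in $T_\nu(\x)$, whence $T_\mu(\x)\subset T_\nu(\x)$ $\mu$-a.e.

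For the reverse inclusion I use the sequence $(\f_n)$ with $T_\nu(\x)=\overline{\{\f_n(\x)\setsep n\in\en\}}$ $\nu$-a.e.\ (hence $\mu$-a.e.). Here the only genuine difficulty is that $\f_n$ need not be $\mu$-integrable when $h$ is unbounded, so I truncate: with $B_k=\{h\le k\}$ put $\f_n^{(k)}=\bchi_{B_k}\f_n$. Then $\f_n^{(k)}\in L^1(\mu,E)$ (as $\int_{B_k}\norm{\f_n}_E h\di\nu\le k\norm{\f_n}_{L^1(\nu)}$), and Proposition~\ref{p:podminka} applied to $\bmu=\f_n\nu$ with $g=\bchi_{B_k}h\in L^1(\abs{\f_n\nu})$ shows $(\f_n^{(k)}h)\nu=(\bchi_{B_k}h\f_n)\nu\in\Ms^{div}(E)$, i.e.\ $\f_n^{(k)}h\in\M_\nu(E)$; by the equivalence this means $\f_n^{(k)}\in\M_\mu(E)$, so $\f_n^{(k)}(\x)\in T_\mu(\x)$ $\mu$-a.e. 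On $B_k$ this reads $\f_n(\x)\in T_\mu(\x)$, and since $\bigcup_k B_k=\{h<\infty\}$ is $\mu$-a.e.\ all of $\er^d$, letting $k\to\infty$ yields $\f_n(\x)\in T_\mu(\x)$ $\mu$-a.e.; a common $\mu$-null set over $n$ together with closedness of $T_\mu(\x)$ gives $T_\nu(\x)=\overline{\{\f_n(\x)\}}\subset T_\mu(\x)$ $\mu$-a.e.

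The two inclusions combine to $T_\mu(\x)=T_\nu(\x)$ $\mu$-a.e.\ I expect the main obstacle to be exactly the integrability mismatch caused by a possibly unbounded density $h$ in the second inclusion; it is handled by the truncation $B_k=\{h\le k\}$ and the stability Proposition~\ref{p:podminka}, after which only routine bookkeeping (merging countably many $\mu$-null exceptional sets and invoking closedness of the tangent subspaces) remains.
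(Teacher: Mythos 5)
Your proof is correct, and its overall skeleton matches the paper's: both arguments first establish the transfer equivalence (for $\g\in L^1(\mu,E)$ one has $\g\in\M_\mu(E)$ if and only if $\g(\x)\in T_\nu(\x)$ for $\mu$-almost all $\x$, via the identity $\g\mu=(\g h)\nu$, the fact that $h>0$ $\mu$-almost everywhere, and linearity of the subspaces $T_\nu(\x)$), and both then push the generating sequences from Proposition~\ref{P:Tnu} through this equivalence in the two directions. The genuine difference lies in the harder inclusion $T_\nu(\x)\subset T_\mu(\x)$. The paper takes a sequence $(\h_n)$ generating $T_\nu$ and applies its equivalence directly to the products $\h_n g$ (with $g$ the density of $\mu$ with respect to $\nu$), asserting $\h_n g\in L^1(\mu,E)$; this integrability claim is not justified there and can fail when $g$ is unbounded, since it amounts to $\int\norm{\h_n}_E\, g^2\di\nu<\infty$. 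You instead truncate on $B_k=\{h\le k\}$, use the stability result Proposition~\ref{p:podminka} (applied to $\f_n\nu$ with the multiplier $\bchi_{B_k}h\in L^1\left(\abs{\f_n\nu}\right)$) to conclude $\bchi_{B_k}\f_n\in\M_\mu(E)$, and then exhaust $\{h<\infty\}$ by the sets $B_k$. This buys you exactly what the paper's shortcut lacks: every function you feed into the characterization of Proposition~\ref{P:Tnu} is genuinely integrable with respect to the relevant measure, so your argument is watertight at the one point where the paper's own proof is (fixably) careless, at the modest price of one extra truncation index and one more countable family of null sets, which you merge correctly.
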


\begin{proof}
By the Radon-Nikod\'ym theorem there is $g\in L^1(\nu)$ such that $g\ge 0$ and $\mu=g\nu$.
Fix any $\f\in L^1(\mu,E)$. We will show 
\begin{equation}\label{eq:munu}
\f(\x)\in T_\mu(\x)  \mbox{ for $\mu$-almost all }\x\in \er^d
\Leftrightarrow \f(\x)\in T_\nu(\x)  \mbox{ for $\mu$-almost all }\x\in \er^d
\end{equation}
This can be proved by the following sequence of equivalences:
$$\begin{aligned}
\f(\x)\in T_\mu(\x) & \mbox{ for $\mu$-almost all }\x\in \er^d 
\Leftrightarrow \f\mu\in\Ms^{div}(E)
\Leftrightarrow \f g\nu\in\Ms^{div}(E)
\\ &\Leftrightarrow  \f(\x)g(\x)\in T_\nu(\x)  \mbox{ for $\nu$-almost all }\x\in \er^d
\\ &\Leftrightarrow  \f(\x)g(\x)\in T_\nu(\x)  \mbox{ for $\mu$-almost all }\x\in \er^d
\\& \Leftrightarrow  \f(\x)\in T_\nu(\x)  \mbox{ for $\mu$-almost all }\x\in \er^d
\end{aligned}
$$
Indeed, the first and the third equivalences follow from Proposition~\ref{P:Tnu},
the second one follows from the equality $\f\mu=\f g\nu$.
Let us prove the fourth one: The implication $\Rightarrow$ is clear as any $\nu$-null set is also $\mu$-null. To show the converse denote $A=\{\x\in \er^d\setsep \f(\x)g(\x)\notin T_\nu(\x)\}$. Then $0=\mu(A)=\int_A g\di\nu$. Since $g>0$ on $A$, we deduce $\nu(A)=0$.
Finally, the last equivalence follows from the fact that $g>0$ $\mu$-almost everywhere using the observation that $\f(\x)g(\x)\in T_\nu(\x) \Leftrightarrow\f(\x)\in T_\nu(\x)$ whenever $g(\x)>0$.

Now we are ready to prove the statement of the lemma.
By Proposition~\ref{P:Tnu} there is a sequence $(\f_n)$ in $L^1(\mu,E)$ such that
$$T_\mu(\x)=\overline{\{\f_n(\x)\setsep n\in\en\}}\mbox{ for $\mu$-almost all } \x\in \er^d.$$
By \eqref{eq:munu} we get $\f_n(\x)\in T_\nu(\x)$ for $\mu$-almost all $\x\in \er^d$ and each $n\in\en$.
We deduce that $T_\mu(\x)\subset T_\nu(\x)$ for $\mu$-almost all $\x\in \er^d$.  The converse is similar --  
there is a sequence $(\h_n)$ in $L^1(\nu,E)$ such that
$$T_\nu(\x)=\overline{\{\h_n(\x)\setsep n\in\en\}}\mbox{ for $\nu$-almost all } \x\in \er^d,$$
hence the equality holds, a fortiori, for $\mu$-almost all $\x\in \er^d$. Finally, $\h_n g\in L^1(\mu,E)$ and whenever $g(\x)>0$ we have
$$\h_n(\x)g(\x)\in T_\nu(\x)\Leftrightarrow \h_n(\x)\in T_\nu(x)\mbox{ and }\h_n(\x)g(\x)\in T_\mu(\x)\Leftrightarrow \h_n(\x)\in T_\mu(x).$$
Since $g>0$ $\mu$-almost everywhere, \eqref{eq:munu} implies $\h_n(\x)\in T_\mu(\x)$ for $\mu$-almost all $\x\in \er^d$ and each $n\in\en$.
We deduce that $T_\nu(\x)\subset T_\mu(\x)$ for $\mu$-almost all $\x\in \er^d$.
\end{proof}

\begin{prop}\label{P:projekcenaMsdiv}
The mapping $P$ is a linear projection of the space $\Ms(\er^d,E)$ onto $\Ms^{div}(E)$ and $\norm{P}\le C$.
\end{prop}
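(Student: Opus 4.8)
The plan is to reduce the well-definedness of $P$ to the canonical Radon--Nikod\'ym representation and then to read off all the remaining properties from Proposition~\ref{P:projekcenaL1nu}. First I would fix the canonical representation: for $\bmu\in\Ms(\er^d,E)$ write $\bmu=\g\abs{\bmu}$ with $\g\in L^1(\abs{\bmu},E)$ (in fact $\norm{\g}_E=1$ $\abs{\bmu}$-almost everywhere), and show that for \emph{any} representation $\bmu=\f\nu$ with $\nu\ge0$ and $\f\in L^1(\nu,E)$ one has $P_\nu(\f)\nu=P_{\abs{\bmu}}(\g)\abs{\bmu}$. The key point is that $\abs{\bmu}=\norm{\f}_E\,\nu$, so $\abs{\bmu}\ll\nu$, and Lemma~\ref{L:rovnostae} yields $T_\nu(\x)=T_{\abs{\bmu}}(\x)$ for $\abs{\bmu}$-almost all $\x$; since $\nu$ and $\abs{\bmu}$ have the same null sets on $\{\norm{\f}_E>0\}$, this equality also holds $\nu$-a.e. on that set. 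On $\{\norm{\f}_E>0\}$ we have $\g=\f/\norm{\f}_E$, so by linearity of the orthogonal projection onto $T_\nu(\x)$ one gets $P_{\abs{\bmu}}(\g)(\x)\,\norm{\f(\x)}_E=P_{T_\nu(\x)}(\f(\x))=P_\nu(\f)(\x)$, while on $\{\norm{\f}_E=0\}$ both sides vanish; integrating against $\nu$ and using $\abs{\bmu}=\norm{\f}_E\,\nu$ gives the asserted equality of measures. This establishes that $P(\bmu)=P_{\abs{\bmu}}(\g)\abs{\bmu}$ is independent of the chosen representation.

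Once $P$ is well defined, linearity follows by passing to a common dominating measure. Given $\bmu_1,\bmu_2\in\Ms(\er^d,E)$, set $\nu=\abs{\bmu_1}+\abs{\bmu_2}$; both measures are absolutely continuous with respect to $\nu$, so $\bmu_j=\f_j\nu$ with $\f_j\in L^1(\nu,E)$, whence $\bmu_1+\bmu_2=(\f_1+\f_2)\nu$. By the well-definedness just proved, $P(\bmu_j)=P_\nu(\f_j)\nu$ and $P(\bmu_1+\bmu_2)=P_\nu(\f_1+\f_2)\nu$, so additivity of $P$ is inherited from the linearity of $P_\nu$ (Proposition~\ref{P:projekcenaL1nu}). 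Homogeneity is handled the same way, using the representation $c\bmu=(c\g)\abs{\bmu}$ and the well-definedness.

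The norm estimate is then immediate: using $\norm{\f\nu}=\norm{\f}_{L^1(\nu,E)}$ and the bound $\norm{P_\nu}\le C$, I obtain $\norm{P(\bmu)}=\norm{P_{\abs{\bmu}}(\g)}_{L^1(\abs{\bmu},E)}\le C\norm{\g}_{L^1(\abs{\bmu},E)}=C\norm{\bmu}$. For the projection property, note that $P_\nu$ maps into $\M_\nu(E)$, so $P(\bmu)=P_{\abs{\bmu}}(\g)\abs{\bmu}\in\Ms^{div}(E)$ by the very definition of $\M_\nu(E)$; conversely, if $\bmu\in\Ms^{div}(E)$ then in the canonical representation $\g\in\M_{\abs{\bmu}}(E)$, so $P_{\abs{\bmu}}(\g)=\g$ and $P(\bmu)=\bmu$. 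Hence $P$ acts as the identity on its range $\Ms^{div}(E)$, which gives $P^2=P$ and identifies the range precisely.

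I expect the main obstacle to be the well-definedness in the first paragraph: the defining formula for $P$ refers to a choice of nonnegative representing measure $\nu$, and it is exactly the representation-independence of the tangent fields $T_\nu$ (Lemma~\ref{L:rovnostae}), combined with the linearity of the pointwise orthogonal projection, that makes the construction coherent. Everything else is bookkeeping that rides on Proposition~\ref{P:projekcenaL1nu}.
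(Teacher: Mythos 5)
Your proof is correct and takes essentially the same route as the paper's: both rest on the pointwise projections of Proposition~\ref{P:projekcenaL1nu} together with Lemma~\ref{L:rovnostae} to make the construction representation-independent, and then read off linearity, the projection property and the norm bound. The only organizational difference is that you anchor well-definedness to the canonical representation $\bmu=\g\abs{\bmu}$ via the variation identity $\abs{\f\nu}=\norm{\f}_E\,\nu$, whereas the paper compares two arbitrary representations directly by passing to the sum $\mu_1+\mu_2$ of the underlying nonnegative measures; both versions work.
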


\begin{proof} Let us start by proving that $P$ is well defined. Suppose that $\mu_1,\mu_2\in\Ms(E)$ are nonnegative measures, $\f_1\in L^1(\mu_1,E)$, $\f_2\in L^1(\mu_2,E)$ and $\f_1\mu_1=\f_2\mu_2$. Let $\nu=\mu_1+\mu_2$ and let $g_1,g_2$ be the Radon-Nikod\'ym derivatives of $\mu_1,\mu_2$, respectively, with respect to $\nu$. Fix now $j\in\{1,2\}$. 
Then
\begin{equation}\label{eq:korektneDef}P_\nu(\f_j g_j)\nu=P_\nu(\f_j) g_j\nu=P_\nu(\f_j)\mu_j=P_{\mu_j}(\f_j)\mu_j.\end{equation}
Indeed, the first equality follows from the pointwise definition of $P_\nu$, the second one from the choice of $g_j$ and the last one from Lemma~\ref{L:rovnostae}. Since the left-hand side equals $P(\f_1 \mu_2) = P(\f_2 \mu_2)$, it follows that
$$P_{\mu_1}(\f_1)\mu_1=P_{\mu_2}(\f_2)\mu_2,$$
therefore $P$ is well-defined.

Let us continue by proving linearity of $P$. It is clear that $P(\alpha\bmu)=\alpha P(\bmu)$ for $\bmu\in \Ms(\er^d,E)$ and $\alpha\in\er$. It remains to show the additivity. Suppose $\bmu_1,\bmu_2\in \Ms(\er^d,E)$. 
Suppose $\bmu_1=\f_1\nu_1$ and $\bmu_2=\f_2\nu_2$.
Let $h_1,h_2$ be densities of $\nu_1,\nu_2$ with respect to $\nu_1+\nu_2$. Then
$$\begin{aligned}
P(\bmu_1+\bmu_2)&=P(\f_1\nu_1+\f_2\nu_2)=P((\f_1 h_1+\f_2 h_2)(\nu_1+\nu_2))
\\&=P_{\nu_1+\nu_2}(\f_1 h_1+\f_2 h_2)(\nu_1+\nu_2)
\\&=(P_{\nu_1+\nu_2}(\f_1 h_1)+P_{\nu_1+\nu_2}(\f_2 h_2))(\nu_1+\nu_2)
\\&=P_{\nu_1+\nu_2}(\f_1 h_1)(\nu_1+\nu_2)+P_{\nu_1+\nu_2}(\f_2 h_2)(\nu_1+\nu_2)
\\&=P_{\nu_1}(\f_1)\nu_1+P_{\nu_2}(\f_2)\nu_2=P(\bmu_1)+P(\bmu_2).\end{aligned}$$
Indeed, all the equalities are obvious except for the sixth one, where we use similar arguments as in \eqref{eq:korektneDef}.

It follows from Proposition~\ref{P:Tnu} that $P$ is a projection onto $\Ms^{div}(E)$. 
The estimate $\norm{P}\le C$ follows from Proposition~\ref{P:projekcenaL1nu}.
\end{proof}

\section{Proof of the main result and final remarks}

We are now ready to prove Theorem~\ref{T:main}. It is an immediate consequence of the following proposition (using Propositions~\ref{P:popisCKK} and~\ref{P:bidual}).

\begin{prop}
$L^1(\er^d,E)/Y(E)$ is complemented in $\Ma(\er^d,E)/X(E)^\perp$ by a projection with norm at most  
$d_{BM}(E,\ell^2_d)$.
\end{prop}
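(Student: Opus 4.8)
The plan is to assemble the projection from three maps already at our disposal, namely $\bR$, the projection $P$ of Proposition~\ref{P:projekcenaMsdiv}, and the isometry $J$ of Lemma~\ref{l:isomorfismusJ}, and then to compose with the canonical inclusion back into the bidual. Write $\iota\colon L^1(\er^d,E)/Y(E)\to\Ma(\er^d,E)/X(E)^\perp$ for the inclusion $\f+Y(E)\mapsto\f\lambda+X(E)^\perp$; since $\g\in Y(E)$ forces $\g\lambda\in X(E)^\perp$, every representative $\f+\g$ of $\f+Y(E)$ yields $\norm{\iota(\f+Y(E))}\le\norm{(\f+\g)\lambda}=\norm{\f+\g}_1$, so $\norm{\iota}\le1$. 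First I would check that each of $\bR$, $P$, $J$ descends to the relevant quotient. Because $\bR(X(E)^\perp)=Y_\sigma(E)$ by Proposition~\ref{p:projekceR}, the map $\bR$ induces $\overline{\bR}\colon\Ma(\er^d,E)/X(E)^\perp\to\Ms(\er^d,E)/Y_\sigma(E)$ with $\norm{\overline{\bR}}\le\norm{\bR}\le1$. Since $Y_\sigma(E)\subset\Ms^{div}(E)$ and $P$ restricts to the identity on $\Ms^{div}(E)$, we have $P(Y_\sigma(E))=Y_\sigma(E)$, so $P$ induces $\overline{P}\colon\Ms(\er^d,E)/Y_\sigma(E)\to\Ms^{div}(E)/Y_\sigma(E)$ with $\norm{\overline{P}}\le\norm{P}\le C$. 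Finally $J^{-1}\colon\Ms^{div}(E)/Y_\sigma(E)\to L^1(\er^d,E)/Y(E)$ is an isometry.

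Set $Q=J^{-1}\circ\overline{P}\circ\overline{\bR}$ and $\widetilde Q=\iota\circ Q$. Then $\widetilde Q$ is a bounded operator on $\Ma(\er^d,E)/X(E)^\perp$ with $\norm{\widetilde Q}\le\norm{\iota}\,\norm{J^{-1}}\,\norm{\overline{P}}\,\norm{\overline{\bR}}\le C$. To see that $\widetilde Q$ is a projection onto the embedded copy of $L^1(\er^d,E)/Y(E)$, the key computation is $Q\circ\iota=\mathrm{id}$. Indeed, for $\f\in L^1(\er^d,E)$ the measure $\f\lambda$ lies in $\Ms(\er^d,E)\cap\Ma(\er^d,E)\subset\Ms^{div}(E)$; being $\sigma$-additive it is fixed by $\bR$, and being in $\Ms^{div}(E)$ it is fixed by $P$. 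Hence, tracing $\iota(\f+Y(E))=\f\lambda+X(E)^\perp$ through the three maps, $\overline{\bR}$ and $\overline{P}$ both return $\f\lambda+Y_\sigma(E)$, and $J^{-1}$ sends this to $\f+Y(E)$ because $J(\f+Y(E))=\f\lambda+Y_\sigma(E)$. Thus $Q\circ\iota=\mathrm{id}$, whence $\widetilde Q^2=\iota\circ(Q\circ\iota)\circ Q=\iota\circ Q=\widetilde Q$, and the range of $\widetilde Q$ is exactly $\iota\bigl(L^1(\er^d,E)/Y(E)\bigr)$. This yields a projection of norm at most $C=d_{BM}(E,\ell^2_d)$, as required.

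The bulk of the difficulty has already been absorbed by the earlier sections: the analytic content sits in the construction of $P$ via tangent spaces (Proposition~\ref{P:projekcenaMsdiv}) and in the approximation results of Proposition~\ref{P:aproximace} underlying both Proposition~\ref{p:projekceR} and Lemma~\ref{l:isomorfismusJ}. Consequently the remaining work is essentially bookkeeping, and the step demanding the most care is confirming that all three maps pass to the quotients with the stated norm bounds — in particular that $\bR$ carries $X(E)^\perp$ onto $Y_\sigma(E)$ and that $P$ fixes $Y_\sigma(E)$ pointwise — together with the verification that $\bR$ and $P$ both leave $\f\lambda$ unchanged, which is precisely what makes $Q$ restrict to the identity on the embedded subspace and thereby forces $\widetilde Q$ to be idempotent.
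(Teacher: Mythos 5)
Your proposal is correct and follows essentially the same route as the paper: the paper's projection is exactly $J^{-1}\circ P\circ\bR$ factored through the quotients, checked to be well defined via Proposition~\ref{p:projekceR} and $Y_\sigma(E)\subset\Ms^{div}(E)$, and shown to fix each $\f\lambda+X(E)^\perp$ because $\bR$ and $P$ both fix $\f\lambda$. Your only deviation is cosmetic — you make the inclusion $\iota$ and the idempotent $\widetilde Q=\iota\circ Q$ explicit, where the paper leaves that composition implicit.
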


\begin{proof}
Let $\bR:\Ma(\er^d,E)\to\Ms(\er^d,E)$ be the linear operator defined in Section~\ref{s:vector}. Let $P:\Ms(\er^d,E)\to\Ms^{div}(E)$ be the projection defined in \eqref{eq:projekce}. Finally, let $J$ be the isometry from Lemma \ref{l:isomorfismusJ}. Define the projection $Q$ by setting
$$Q(\bmu+X(E)^\perp)= J^{-1}(PR\bmu+Y_\sigma(E)),\quad [\bmu]=\bmu+X(E)^\perp\in \Ma(\er^d,E)/X(E)^\perp.$$

First observe that $Q$ is well-defined. Indeed, if $\bmu\in X(E)^\perp$, then $R(\bmu)\in Y_\sigma(E)$ by Proposition \ref{p:projekceR}.
Since $Y_\sigma(E)\subset\Ms^{div}(E)$, $PR(\bmu)=R(\bmu)\in Y_\sigma(E)$. Hence $J^{-1}(PR\bmu+Y_\sigma(E))=0$.

Let $\f\in L^1(\er^d,E)$.  Then $R(\f\lambda)=\f\lambda\in \Ms^{div}(E)$, thus $PR(\f\lambda)=\f\lambda$ and so
$Q(\f\lambda+X(E)^\perp)=\f+Y(E)$. This shows that $Q$ is a projection onto $L^1(\er^d,E)/Y(E)$.
Finally, obviously $\norm{\bR}=1$, $\norm{P}\le d_{BM}(E,\ell^2_d)$ by Proposition~\ref{P:projekcenaMsdiv} and $\norm{J^{-1}}\le 1$ by Lemma~\ref{l:isomorfismusJ}. The estimate of the norm of $Q$ then follows immediately.
\end{proof}

The previous proposition completes the proof of the main result. The resulting projection is a composition of three mappings. It is natural to ask whether the result can be prove more easily, using some more `natural' approach. There are two such ways that one is tempted to try. However, none of them works. Let us explain it in more detail.

The first possibility is to try to use the projection $\bS$. The motivation for that is that $\bS$ is a projection of $\Ma(\er^d,E)$ onto $L^1(\er^d,E)$. If one succeeded to prove that $\bS(X(E)^\perp)\subset Y(E)$, one would get a projection of $\Ma(\er^d,E)/X(E)^\perp$ onto $L^1(\er^d,E)/Y(E)$ by factorizing the projection $\bS$. But this is not possible by Proposition~\ref{P:protipriklady} below.

The second possibility is to try to use the composition of $I-\bA_s$ with $\bR$.
Indeed, $\bR$ maps $\Ma(\er^d,E)$ onto $\Ms(\er^d,E)$ and $I-\bA_s$ is a projection of $\Ms(\er^d,E)$ onto $L^1(\er^d,E)$. If one succeeded to prove that $(I-\bA_s)(Y_\sigma(E))\subset Y(E)$, one would get a projection of $\Ma(\er^d,E)/X(E)^\perp$ onto $L^1(\er^d,E)/Y(E)$ by factorizing the projection $(I-\bA_s)\bR$. But this is not possible by the following proposition.

\begin{prop}\label{P:protipriklady} 
If $d\ge2$, then $\bS(X(E)^\perp)\not\subset Y(E)$ and $(I-\bA_s)(Y_\sigma(E))\not\subset Y(E)$.
\end{prop}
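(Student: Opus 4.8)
The plan is to produce explicit counterexamples for both non-inclusions; both will rest on a single building block that exists \emph{only because} $d\ge2$. Fix a unit vector $\a\in E$ and let $\Sigma=\{t\a\setsep t\in[0,1]\}$ be the segment from $\o$ to $\a$. First I take the singular measure $\bmu_s\in\Ms(\er^d,E)\cap\Mn(\er^d,E)$ determined by $\ip{\bmu_s}{\f}=\int_0^1\ip{\f(t\a)}{\a}\di t$, so that for $F\in\Lip_0(E)$ one has $\ip{\bmu_s}{\nabla F}=\int_0^1\tfrac{d}{dt}F(t\a)\di t=F(\a)$ and hence $\ip{\div\bmu_s}{\varphi}=\varphi(\o)-\varphi(\a)$. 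The second ingredient is a field $\g\in L^1(\er^d,E)$ with $\ip{\div\g}{\varphi}=\varphi(\a)-\varphi(\o)$ (equivalently $\ip{\g\lambda}{\nabla F}=F(\o)-F(\a)$ for all $F\in\Lip_0(E)$); in particular $\div\g\ne0$, so $\g\notin Y(E)$. Such a $\g$ cannot be curl-free (its gradient realization is $\nabla$ of a Newtonian-potential difference, whose tail is not integrable), but for $d\ge2$ it can be built as a superposition of curve currents: choose a smooth family $(\gamma_s)_{s\in B}$ of curves from $\a$ to $\o$, indexed by the unit ball $B\subset\er^{d-1}$, sweeping out a spindle region diffeomorphically, and set $\g\lambda=\lambda_{d-1}(B)^{-1}\int_B(\text{unit-tangent current of }\gamma_s)\di s$. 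Each curve current contributes $\varphi(\o)-\varphi(\a)$ to the divergence pairing, while the occupation measure is absolutely continuous with $L^1$ density (near $\o$ and $\a$ it blows up only like $\dd(\cdot,\{\o,\a\})^{-(d-1)}$, which is locally integrable), so indeed $\g\in L^1(\er^d,E)$.

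For $(I-\bA_s)(Y_\sigma(E))\not\subset Y(E)$ I would simply put $\bmu=\g\lambda+\bmu_s$. Then the two divergence pairings cancel, so $\div\bmu=0$, i.e. $\bmu\in Y_\sigma(E)$; moreover $\g\lambda$ is the absolutely continuous part and $\bmu_s$ the mutually singular part of $\bmu$, so by uniqueness of the Lebesgue decomposition $\bA_s\bmu=\bmu_s$ and $(I-\bA_s)\bmu=\g\lambda$. Since $\div\g\ne0$ we get $\g\lambda\notin Y(E)$, which is the claim.

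For $\bS(X(E)^\perp)\not\subset Y(E)$ I keep the same $\g$ but replace the (non-$\Ma$) measure $\bmu_s$ by a \emph{purely finitely additive} measure acting on $X(E)$ in the same way. For $\varepsilon>0$ let $\Sigma_\varepsilon=\{t\a+\x'\setsep t\in[0,1],\ \x'\perp\a,\ \abs{\x'}<\varepsilon\}$ and $\g_\varepsilon=\lambda(\Sigma_\varepsilon)^{-1}\bchi_{\Sigma_\varepsilon}\a\in L^1(\er^d,E)$, so $\g_\varepsilon\lambda\in\Ma(\er^d,E)$ with $\norm{\g_\varepsilon\lambda}=1$. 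As the unit ball of $\Ma(\er^d,E)=L^\infty(\er^d,E^*)^*$ is weak$^*$-compact and $\Ma(\er^d,E)$ is weak$^*$-closed, the net $(\g_\varepsilon\lambda)_{\varepsilon\to0}$ has a weak$^*$-cluster point $\bnu_{pfa}\in\Ma(\er^d,E)$. I then verify two facts. First, $\bnu_{pfa}$ is purely finitely additive: for $\delta>0$ and $\varepsilon<\delta$ the field $\g_\varepsilon$ is supported in $\Sigma_\delta$, whence $\abs{\bnu_{pfa}}(\er^d\setminus\overline{\Sigma_\delta})=0$; since $\abs{\bS\bnu_{pfa}}\le\abs{\bnu_{pfa}}$ and $\er^d\setminus\overline{\Sigma_\delta}\uparrow\er^d\setminus\Sigma$, $\sigma$-additivity gives $\abs{\bS\bnu_{pfa}}(\er^d\setminus\Sigma)=0$, and as $\bS\bnu_{pfa}\in\Ma(\er^d,E)$ must vanish on the Lebesgue-null set $\Sigma$, we conclude $\bS\bnu_{pfa}=0$. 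Second, $\ip{\bnu_{pfa}}{\nabla F}=F(\a)$ for every $F\in\Lip_0(E)$: a Fubini computation along the cylinder gives
$$\ip{\g_\varepsilon\lambda}{\nabla F}=\frac{1}{\lambda(\Sigma_\varepsilon)}\int_{\abs{\x'}<\varepsilon}\bigl(F(\a+\x')-F(\x')\bigr)\di\x'\longrightarrow F(\a)-F(\o)=F(\a)$$
along the \emph{full} net (by continuity of $F$), so the cluster value is $F(\a)$.

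Finally I set $\bnu=\g\lambda+\bnu_{pfa}\in\Ma(\er^d,E)$. Combining the two computations, $\ip{\bnu}{\nabla F}=(F(\o)-F(\a))+F(\a)=0$ for all $F\in\Lip_0(E)$, and since $X(E)=\{\nabla F\setsep F\in\Lip_0(E)\}$ by Proposition~\ref{P:popisCKK} this means $\bnu\in X(E)^\perp$. On the other hand $\bS\bnu=\bS(\g\lambda)+\bS\bnu_{pfa}=\g\lambda\notin Y(E)$, which is exactly $\bS(X(E)^\perp)\not\subset Y(E)$. The step I expect to be the main obstacle is the construction of the $L^1$ field $\g$ with a dipole-type divergence, together with the proof that the spindle occupation measure is genuinely absolutely continuous with integrable density — this is precisely where the hypothesis $d\ge2$ enters — followed by the (easier, but care-requiring) verification that the weak$^*$-cluster point $\bnu_{pfa}$ is purely finitely additive rather than merely singular.
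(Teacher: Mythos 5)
Your proposal is correct, but for the first non-inclusion it takes a genuinely different route from the paper. For the second assertion your argument is essentially the paper's: the paper takes a one-to-one $C^1$ curve current (you take the straight segment current $\bmu_s$), computes $\div\bmu_s=\varepsilon_{\o}-\varepsilon_{\a}$ in $\Dcp(\er^d)$, and then invokes \cite[Proposition 3.4]{CKK} for an $L^1$ field with the opposite dipole divergence --- that citation is exactly the spindle construction you sketch by hand, so the one step you flag as the ``main obstacle'' can simply be outsourced to \cite{CKK} (which even gives compact support). For the first assertion the paper argues by contradiction through $L$-embedding theory: if $\bS(X(E)^\perp)\subset Y(E)$ then (taking $E=\ell^1_d$, as the inclusion is norm-independent) $\bS$ is an $L$-projection, so $Y(E)$ would be $L$-embedded, whence by Pfitzner's theorem \cite{pfitznerLemb} its unit ball would be closed in the topology of local convergence in measure, contradicting \cite[Proposition 7]{GL}. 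You instead produce an explicit witness: the finitely additive ``segment current'' $\bnu_{pfa}$, a weak$^*$ cluster point of the tube fields $\g_\varepsilon\lambda$, satisfies $\bS\bnu_{pfa}=0$ (your argument is sound: componentwise $\abs{S\nu_i}\le\abs{\nu_i}$, so $\bS\bnu_{pfa}$ is $\sigma$-additive, concentrated on the Lebesgue-null segment, and vanishes on null sets, hence is zero) while pairing with every $\nabla F$, $F\in\Lip_0(E)$, to give $F(\a)$; adding $\g\lambda$ then lands in $X(E)^\perp$ with $\bS(\g\lambda+\bnu_{pfa})=\g\lambda\notin Y(E)$. You were also right to check the pairing against \emph{all} Lipschitz gradients rather than only test gradients, since $X(E)^\perp$ is strictly smaller than the annihilator of $\{\nabla\varphi\setsep\varphi\in\Dc(\er^d)\}$; for $\g\lambda$ this can alternatively be deduced from the distributional identity via Proposition~\ref{P:aproximace}(i). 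What each approach buys: yours is constructive and self-contained (Alaoglu plus the representation of Proposition~\ref{P:popisCKK}), and derives both non-inclusions from one building block; the paper's is shorter modulo two nontrivial external results and yields the extra structural information that $Y(\ell^1_d)$ is not $L$-embedded. Two minor inaccuracies, neither fatal: the identity $\int_0^1\ip{\nabla F(t\a+\x')}{\a}\di t=F(\a+\x')-F(\x')$ for merely Lipschitz $F$ needs the standard Rademacher--Fubini justification (almost every line meets the non-differentiability set in a null set); and your closing claim about where $d\ge2$ enters is off, since in $d=1$ an $L^1$ field with dipole divergence exists trivially (e.g.\ $-\bchi_{[0,1]}$) --- what genuinely requires $d\ge2$ is that the segment is Lebesgue-null, which is what makes $\bmu_s$ singular and forces $\bS\bnu_{pfa}=0$.
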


\begin{proof}
1. Assume that $\bS(X(E)^\perp)\subset Y(E)$. Since the validity of this inclusion does not depend on the concrete norm on $E$, suppose without loss of generality that $E=\ell^1_d$. In this case $\bS$ is an $L$-projection of $\Ma(\er^d,E)=(L^1(\er^d,E))^{**}$ onto $L^1(\er^d,E)$, hence $L^1(\er^d,E)$ is $L$-embedded. The assumption  $\bS(X(E)^\perp)\subset Y(E)$ then implies that the restriction of $\bS$ to $X(E)^\perp$ is an $L$-projection onto $Y(E)$. Since $Y(E)^{**}=Y(E)^{\perp\perp}=X(E)^\perp$, we conclude that $Y(E)$ is $L$-embedded.
Using \cite[Corollary 6.4 and remark (i) on p. 435]{pfitznerLemb} it follows that the unit ball of $Y(E)$ is closed in $L^1(\er^d,E)$ equipped with the topology of  local convergence in measure. But this contradicts \cite[Proposition 7]{GL}. This contradiction completes the proof of the first assertion.

2. Let us prove the second assertion. Since the concrete norm on $E$ plays no role in the assertion, we write simply $\er^d$ instead of $E$. Let $[a,b]\subset\er$ be a closed interval and let $\gamma:[a,b]\to \er^d$ be a one-to-one $C^1$-smooth curve. Let us define a measure $\bmu\in\Ms(\er^d,\er^d)$ by
$$\bmu(B) = \int_{\gamma^{-1}(B)} \gamma' =\left(\int_{\gamma^{-1}(B)} \gamma_1',\dots,\int_{\gamma^{-1}(B)} \gamma_d'\right),\quad B\in\B(\er^d).
$$
Then $\bmu\in\Ms(\er^d,\er^d)\cap\Mn(\er^d,\er^d)$ and, moreover, $\div\bmu=\varepsilon_{\gamma(a)}-\varepsilon_{\gamma(b)}$ in $\Dcp(\er^d)$ (by $\varepsilon_{\x}$ we denote the Dirac measure supported at $\x$. Indeed, if $\varphi\in\Dc(\er^d)$, we have
$$\begin{aligned}
\ip{\div\bmu}{\varphi}&=-\ip{\bmu}{\nabla\varphi}=-\sum_{i=1}^d \int \partial_i\varphi\di\mu_i
=-\sum_{i=1}^d \int_a^b \partial_i\varphi(\gamma(t))\gamma_i'(t)\di t
\\&=-\int_a^b (\varphi\circ\gamma)'(t)\di t=\varphi(\gamma(a))-\varphi(\gamma(b))=\ip{\varepsilon_{\gamma(a)}-\varepsilon_{\gamma(b)}}{\varphi}.
\end{aligned}
$$
Finally, by \cite[Proposition 3.4]{CKK} there is $\f\in L^1(\er^d,\er^d)$ (with compact support) such that $\div\f=\varepsilon_{\gamma(a)}-\varepsilon_{\gamma(b)}$ in $\Dcp(\er^d)$. It follows that $\f\lambda-\bmu\in Y_\sigma(\er^d)$ and $(I-\bA_s)(\f\lambda-\bmu)=\f\lambda\notin Y(\er^d)$ as $\gamma(a)\ne\gamma(b)$. This completes the proof.
\end{proof}

We do not know the answer to the following question. Note that a positive answer would yield a stronger version of Proposition~\ref{P:protipriklady}.

\begin{ques}
Suppose that $d\ge 2$. Is it true that $$\bS(X(E)^\perp)=(I-\bA_s)(Y_\sigma(E))=L^1(\er^d,E)\ ?$$
\end{ques}

\begin{remark}
The referee pointed out that our approach is related to the fact that the Leray projection on $L^1$ is not bounded. This is in a sense true, let us explain it a bit. If $p\in(1,\infty)$ and $d\in\en$, then the space $\{\f\in L^p(\er^d,\er^d)\setsep \div\f=0\}$ is complemented in $L^p(\er^d,\er^d)$ by a canonical projection whose kernel consists of gradients of functions from a suitable function space, see e.g. \cite[Section III.1]{galdi}. This canonical projection is called the Leray projection (sometimes the Leray-Helmholtz projection or the Helmholtz-Weyl projection). For $p=1$ such projection does not exist.

It is indeed related to the topic of our paper -- if $\{\f\in L^1(\er^d,\er^d)\setsep \div\f=0\}$ was complemented in $L^1(\er^d,\er^d)$,
then the space $\F(\er^d)$ which is isometric to the respective quotient by \cite{CKK} would be isomorphic to a complemented subspace of $L^1(\er^d,\er^d)$, hence obviously complemented in the bidual. 

However, it is not possible to proceed like that as by \cite{naoSch} the space $\F(\er^2)$ cannot be isomorphically embedded into any $L^1$ space.
It follows, in particular, that $\{\f\in L^1(\er^d,\er^d)\setsep \div\f=0\}$ is not complemented in $L^1(\er^d,\er^d)$.
\end{remark}

\section*{Acknowledgement}

We are grateful to the referee for a careful reading of the paper and for several useful comments.

%\bibliography{references}\bibliographystyle{siam}
%\end{document}

\end{document}